\newcommand{\rid}[1]{E(\mathcal{#1})}
\newcommand{\ridx}[2]{E(\mathcal{#1}(#2))}
\newcommand{\iicp}{\mathcal{I}_{ic}(P)}
\newcommand{\ic}[1]{\mathcal{I}_{ic}(#1)}
\newcommand{\pex}[2]{P_{#1,#2}(\mathcal{C})}
\newcommand{\invcat}{\mathrm{InvCat}}
\newcommand{\K}{\mathbb{K}}
\newtheorem{theorem}{Theorem}[section]
\newtheorem{lemma}[theorem]{Lemma}
\newtheorem{corollary}[theorem]{Corollary}
\theoremstyle{definition}
\newtheorem{definition}[theorem]{Definition}
\newtheorem{example}[theorem]{Example}
\newtheorem{proposition}[theorem]{Proposition}
\theoremstyle{remark}
\newtheorem{remark}[theorem]{Remark}
\numberwithin{equation}{section}
\newcommand{\inc}{\widehat{inc}}
\newcommand\restr[2]{{
        \left.\kern-\nulldelimiterspace 
        #1 
        \vphantom{\big|} 
        \right|_{#2} 
}}
\newcommand{\calC}{\mathcal{C}}
\newcommand{\calD}{\mathcal{D}}
\newcommand{\pc}{ P_{\mathcal{C}}}
\begin{document}
	
\sloppy
    

\title{Partial actions of inverse categories and their algebras}


\author{Marcelo M. Alves, Willian G.G. Velasco}
\curraddr{Mathematics Department, Federal University of Paran\'a, Brazil}
\email{marcelomsa@ufpr.br}

\curraddr{Mathematics Department, Federal University of Paran\'a, Brazil}
\email{willianvelasco@protonmail.com}

\subjclass[2020]{18B40, 20M18, 20M30, 16W22}

\keywords{inverse semigroups, partial actions, inverse categories}


\begin{abstract}
In this work we introduce partial and global actions of inverse categories on posets in two variants, fibred actions and actions by symmetries. We study in detail actions of an inverse category $\mathcal{C}$ on specific subposets of the poset of finite subsets of $\mathcal{C}$, the \textit{Bernoulli actions}.   
We show that to each fibred action of an inverse category on a poset there corresponds another inverse category, the semidirect product associated to the action. The Bernoulli actions give rise to the \textit{Szendrei expansions} of $\mathcal{C}$, which define a endofunctor of the category of inverse categories. We extend the concept of ``enlargement'' from inverse semigroup theory to, and we show that if $\mathcal{D}$ is an enlargement of $\mathcal{C}$ then their Cauchy completions are equivalent categories; in particular, some pairs corresponding to partial and global Bernoulli actions are enlargements.  
We conclude by studying convolution algebras of finite inverse categories and showing that if $\mathcal{D}$ is an enlargement of $\mathcal{C}$ then their convolution algebras are Morita equivalent. Furthermore, using Kan extensions we also analyze the infinite case.

\end{abstract}
\maketitle


\section{Introduction}

Partial group actions were introduced in the 1990's by R. Exel, in the context of $C^*$-algebras,  as a means to generalize the construction of the crossed product associated to a group action, while simultaneously extending homological results for standard crossed products  \cite{exel1994circle}. These so-called partial crossed products  were later investigated in a purely algebraic context, beginning by Dokuchaev and Exel's work on associativity of crossed products in \cite{ruy05}.

A fundamental result by Exel is that partial actions of a group $G$ correspond to actions of an inverse semigroup $S(G)$  \cite{ruy98}. Later Kellendonk and Lawson \cite{birget1984almost,kellendonk-lawson} realized that his construction is equivalent to the Birget-Rhodes expansion $G^{BR}$ of the group $G$ (cf \cite{maria-nontebr} for details of this notion). According to \cite{birget1984almost}, an \textit{expansion} of a semigroup is an endofunctor $F$ of the category of semigroups of, more generally a functor $F $ between categories of semigroups, endowed with a natural transformation $\eta_S : F(S) \to S$ which is surjective for each semigroup $S$ in the domain of $F$. In  \cite{birget1984almost} it was shown that $S(G)$ is an E-unitary inverse semigroup for every group $G$, and $\eta: S(G) \to G$ is the quotient map from $S(G)$ onto $G$ defined by the least group congruence on $S(G)$.

The representation aspect of those constructions has also been investigated. \textit{Partial representations} of a group $G$ over $\K$ correspond to left modules over the semigroup algebra $\K S(G)$, which is also known as the \textit{partial group algebra} $\K_{par} G$. In \cite{mishca-ruy-paolo} Dokuchaev, Exel and Piccione obtained a formula for a decomposition of  $\K S(G)$ as a direct sum of matrix algebras. This description is obtained by first showing that $\K S(G)$ is isomorphic to a groupoid algebra $\K \Gamma (G)$; this groupoid $ \Gamma (G)$ is the action groupoid, as defined by Kellendonk and Lawson in \cite{kellendonk-lawson}, associated to a specific partial action, called a ``Bernoulli action'' in  \cite{ruylivro}.

Partial actions and expansions of groupoids have also been investigated
\cite{gilbert-pdgexp,gilbert-pthm,bagio2010partial}, and also expansions of inverse semigroups
\cite{lawsonmarstein-expansions, bussexel-expa}. In this paper we carry this investigation further to \textit{inverse categories}.
One strong motivation to consider those in the context of partial actions is that some fundamental results from inverse semigroup theory have already been extended to inverse categories, such as the Wagner-Preston Theorem  (cf. \cite{linckelmann-inverse}), the Ehresmann-Schein-Nambooripad Theorem (cf. \cite{dewolf-ehresmann}), and Steinberg's description of the algebra of a finite inverse semigroup \cite{linckelmann-inverse}. 
Moreover, inverse categories provide a natural environment in which to develop partial actions, since they combine fundamental characteristics of inverse semigroups and groupoids. 

In this paper we introduce global and partial actions of inverse categories on posets and we study the $\K$-algebras associated to expansions associated to variations of Bernoulli actions on posets.

We begin by reviewing the necessary concepts and results on partial actions, inverse categories and Cauchy completion of categories in Section \ref{section-preliminary}. 
In Section \ref{section-actions} we introduce fibred actions of an inverse category $\mathcal{C}$ on a poset $P$, actions by symmetries of $\mathcal{C}$ on a poset $P$ and we show that every fibred action induces an action by symmetries (but not conversely); we also introduce the partial versions of those concepts, and we introduce Bernoulli actions of inverse categories.

In Section \ref{section-sd-product} we show that to each fibred action of $\mathcal{C}$ on a poset $P$ there corresponds an inverse category, the semidirect product $P\rtimes \mathcal{C}$. The Bernoulli actions introduced in the previous section give rise to the \textit{Szendrei expansions} of $\mathcal{C}$, which define a endofunctor of the category of inverse categories. Still in this section we define ``enlargement'' for inverse categories; in Proposition \ref{propo-invcat-enlarg-implies-cauchyenlarg} we show that if $\mathcal{D}$ is an enlargement of $\mathcal{C}$ then their Cauchy completions are equivalent categories. Of course, we turn then to examine the various Bernoulli expansions of $\mathcal{C}$ and we conclude, in Theorem \ref{theo-restrict-exp-enlarg}, that some pairs of global-partial Bernoulli expansions are in fact enlargements.  

Section \ref{section-algebras} is dedicated to the study of Morita equivalence of inverse categories and how to relate the (convolution) algebras of our Szendrei expansions. We study enlargements of inverse categories from the viewpoint of representations and, in Theorem \ref{theo-enlarg-imp-morita-alg} we show that if the inverse category $\mathcal{D}$ is an enlargement of $\mathcal{D}$ and both are finite then their convolution algebras $\K \mathcal{D}$ and $\K \mathcal{C}$ are Morita equivalent. As a consequence of previous results we conclude that convolution algebras of pairs of Szendrei expansions which constitute enlargements are in fact Morita equivalent (Corollary \ref{coro-szendrei-alge-morita}). We conclude this part by analyzing the case where $\mathcal{C}$ is an infinite inverse category, and we show that the representations of the Cauchy completion of the strict global Bernoulli expansion are Kan extensions of the representations of the Cauchy completion of strict partial Bernoulli expansion of $\mathcal{C}$ (Theorem \ref{theo-rep-inv-cat-exp}).

Finally, Section \ref{section-previous-cases} relates our constructions to the known expansions of groups, inverse semigroups and groupoids in the literature \cite{ruy98,bussexel-expa,lawsonmarstein-expansions,gilbert-pdgexp}.

\section{Preliminary notions }\label{section-preliminary}

\subsection{Categories} We will use category terminology as in Mac Lane \cite{maclane-categories} and Riehl \cite{riehl-category-context}.

\subsection{Inverse categories} Inverse categories are the main ingredient of our constructions, and we review here basic definitions and results on this subject. Our approach is closer to the theory of inverse semigroups as in Linckelmann \cite{linckelmann-inverse}; other way of dealing with this theory is via restriction categories, as it is done in Cockett-Lack \cite{cortes-partialactoncat}.

\begin{definition}[\cite{linckelmann-inverse}]\label{defi-inv-cat}
	A category $\mathcal{C}$ is called an \textit{inverse category} if for each arrow $(s:X\rightarrow Y)\in\mathcal{C}$ there exists a unique arrow $(s^\circ:Y\rightarrow X)\in\mathcal{C}$, called \textit{inverse}, such that $ss^\circ s=s \text{  and  } s^\circ ss^\circ=s^\circ.$ Notation: $(\mathcal{C}, (\:\:)^\circ)$ will denote an inverse category.
\end{definition}

We can establish an equivalent way to define such categories. 
\begin{proposition}[\cite{cockett-restriction-I}]\label{prop-defi-equiv-invcat}
	Let $\mathcal{C}$ be a category. The following are equivalent:
	\begin{enumerate}[(i)]
		\item for every arrow $s:X\rightarrow Y$ there exists a unique arrow $t:Y\rightarrow X$ such that $sts=s$ and $tst=t$;
		\item there exists a functor $(\:\:)^\circ:\mathcal{C}\rightarrow\mathcal{C}^{op}$ which fixes objects ( $ (X)^\circ=X$ for every $X\in\mathcal{C}^{(0)}$) and for $s,t\in\mathcal{C}$, satisfies the following equalities: 
  \begin{center}
$(s^\circ)^\circ=s$,  $ss^\circ s=s$, and $(ss^\circ)(tt^\circ)=(tt^\circ)(ss^\circ).$
\end{center}	
		Moreover, the structures described in (i) and (ii) are unique.
	\end{enumerate}
\end{proposition}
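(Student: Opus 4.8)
The plan is to prove the two implications separately and to read off the uniqueness claims along the way; the core of the argument is the classical passage from ``unique pseudo-inverses'' to ``commuting idempotents'' of inverse semigroup theory, carried out one endomorphism monoid at a time. For (i) $\Rightarrow$ (ii), assuming (i) I would define $s^\circ$ to be the unique $t$ with $sts=s$ and $tst=t$. The two defining equations are symmetric in $s$ and $s^\circ$, so $s$ itself witnesses the pseudo-inverse of $s^\circ$, and uniqueness gives $(s^\circ)^\circ=s$; the relation $ss^\circ s=s$ holds by definition. The substantive point is that idempotents commute: an idempotent $e$ (an endomorphism with $e^2=e$) satisfies $eee=e$, hence $e^\circ=e$; and for composable idempotents $e,f$ one runs the usual computation inside the endomorphism monoid of their common object — setting $x=(ef)^\circ$, check that $fxe$ is again a pseudo-inverse of $ef$ so that $x=fxe$, substitute back to get $x^2=x$ hence $x^\circ=x$, note that $ef$ is also a pseudo-inverse of $x$ so $x^\circ=ef$, conclude $ef=x$ is idempotent (and likewise $fe$), and finally observe that $fe$ is a pseudo-inverse of the self-inverse idempotent $ef$, forcing $ef=fe$. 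Given commutativity of idempotents, functoriality is routine: $1_X^\circ=1_X$, and $(gf)^\circ=f^\circ g^\circ$ follows by checking that $f^\circ g^\circ$ is a pseudo-inverse of $gf$, collapsing $(gf)(f^\circ g^\circ)(gf)=g(ff^\circ)(g^\circ g)f$ after swapping the commuting idempotents $ff^\circ$ and $g^\circ g$; the displayed relation $(ss^\circ)(tt^\circ)=(tt^\circ)(ss^\circ)$ is then just a special case.

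For (ii) $\Rightarrow$ (i): given $(-)^\circ$ as in (ii), applying $ss^\circ s=s$ to $s^\circ$ and using $(s^\circ)^\circ=s$ gives $s^\circ ss^\circ=s^\circ$, so $s^\circ$ is a pseudo-inverse of $s$ and the existence part of (i) is immediate. For uniqueness I would first upgrade the restricted hypothesis (commutativity of idempotents of the shape $ss^\circ$) to commutativity of all composable idempotents. The key lemma is that $e^\circ=e$ for every idempotent $e$: functoriality makes $e^\circ$ idempotent, a direct computation gives $(ee^\circ)(e^\circ e)=e$ and $(e^\circ e)(ee^\circ)=e^\circ$, and since both $ee^\circ$ and $e^\circ e=e^\circ(e^\circ)^\circ$ have the form $aa^\circ$, the hypothesis yields $e=e^\circ$. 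Then every idempotent $e$ equals $ee^\circ$, so $ef=(ee^\circ)(ff^\circ)=(ff^\circ)(ee^\circ)=fe$ for composable idempotents $e,f$. With all idempotents commuting, uniqueness of pseudo-inverses is the standard string manipulation: if $sts=s$, $tst=t$, $st's=s$, $t'st'=t'$, then $st,st',ts,t's$ are idempotents, and commuting them appropriately one gets $t=tst'=t'$.

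Finally, the uniqueness assertions follow: in (i) the map $s\mapsto s^\circ$ is unique by hypothesis; in (ii), any functor with the stated properties sends each $s$ to a pseudo-inverse of $s$, and these are unique by the argument above (which uses only that the idempotents of $\mathcal{C}$ commute, a property of $\mathcal{C}$ that (ii) guarantees), so the functor is unique and coincides with the assignment of (i). I expect the one genuine obstacle to be upgrading the restricted commutativity hypothesis to commutativity of \emph{all} idempotents in the (ii) $\Rightarrow$ (i) direction; the remaining steps are the inverse-semigroup arguments transported object by object.
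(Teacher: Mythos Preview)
The paper does not supply a proof of this proposition; it is stated with a citation to \cite{cockett-restriction-I} and used as background. Your argument is correct and self-contained: the (i)$\Rightarrow$(ii) direction is the standard inverse-semigroup passage from unique pseudo-inverses to commuting idempotents carried out in each endomorphism monoid, and your (ii)$\Rightarrow$(i) direction correctly handles the only delicate point, namely upgrading the hypothesis ``idempotents of the form $ss^\circ$ commute'' to ``all idempotents commute'' via the lemma $e^\circ=e$, after which uniqueness of pseudo-inverses follows by the usual string manipulation.
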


Functors between inverse categories preserve the inverse structure.

\begin{proposition} \cite{nystedt-pinedo-epsilon}\label{prop.functors.inverse.categories}
  Let   $(\mathcal{C}, (\:\:)^\circ)$ and $(\mathcal{D}, (\:\:)^\ast)$ be inverse categories and let $F: \mathcal{C} \to \mathcal{D}$ be a functor. Then $F(s^\circ) = F(s)^\ast$ for every morphism $s \in \mathcal{C}$. 
\end{proposition}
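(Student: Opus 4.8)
The plan is to use functoriality of $F$ to transport the defining relations of the inverse of $s$ in $\mathcal{C}$ to relations in $\mathcal{D}$, and then invoke the uniqueness clause in Definition \ref{defi-inv-cat}.

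Concretely, fix a morphism $s \colon X \to Y$ in $\mathcal{C}$, so that $s^\circ \colon Y \to X$ is the unique morphism satisfying $s s^\circ s = s$ and $s^\circ s s^\circ = s^\circ$. First I would observe that both composites $F(s)F(s^\circ)F(s)$ and $F(s^\circ)F(s)F(s^\circ)$ are defined in $\mathcal{D}$: indeed $F(s)\colon F(X) \to F(Y)$ and $F(s^\circ)\colon F(Y) \to F(X)$, so the domains and codomains match up exactly as needed. Next, since $F$ is a functor it preserves composition, hence $F(s)F(s^\circ)F(s) = F(s s^\circ s) = F(s)$ and likewise $F(s^\circ)F(s)F(s^\circ) = F(s^\circ s s^\circ) = F(s^\circ)$. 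Therefore $F(s^\circ)$ is a morphism $F(Y) \to F(X)$ satisfying the two generalized-inverse identities relative to $F(s)$.

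Finally, because $\mathcal{D}$ is an inverse category, the morphism with this property is unique — by Definition \ref{defi-inv-cat} it is precisely $F(s)^\ast$. Comparing, $F(s^\circ) = F(s)^\ast$, which is the assertion. Since $s$ was arbitrary, the proof is complete.

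There is essentially no obstacle here: the only point requiring a moment's care is to confirm that the source/target bookkeeping in $\mathcal{D}$ is consistent so that the relevant composites exist, after which the uniqueness of inverses in $\mathcal{D}$ does all the work. (This is the exact categorical analogue of the elementary fact that a homomorphism of inverse semigroups preserves the involution.)
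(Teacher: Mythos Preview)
Your argument is correct: functoriality transports the two defining identities $ss^\circ s = s$ and $s^\circ s s^\circ = s^\circ$ to $\mathcal{D}$, and uniqueness of inverses there forces $F(s^\circ) = F(s)^\ast$. The paper does not supply its own proof of this proposition (it is simply cited from \cite{nystedt-pinedo-epsilon}), but your reasoning is exactly the standard one.
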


\begin{proposition}[\cite{linckelmann-inverse}]\label{prop-inv-cat-props}
	Let $(\mathcal{C}, (\:\:)^\circ)$ be an inverse category. 
	Given arrows $s,t:X\rightarrow Y$, and the idempotent morphisms $e:X\rightarrow X, f:Y\rightarrow Y$ in $\mathcal{C}$, we have that:
	\begin{enumerate}
		\item [(i)]$(st)^\circ =t^\circ s^\circ $, and
		\item [(ii)]the arrows $ses^\circ:X\rightarrow X$  and $tft^\circ: Y\rightarrow Y$ are idempotent.
	\end{enumerate}
\end{proposition}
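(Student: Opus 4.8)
The plan is to derive both items from two standard facts: the uniqueness of inverses (Definition~\ref{defi-inv-cat}), and the commutativity of idempotent endomorphisms of a fixed object, which is part of characterization~(ii) in Proposition~\ref{prop-defi-equiv-invcat}. As preparation I would first observe that for every morphism $s$ the composites $ss^\circ$ and $s^\circ s$ are idempotent --- for instance $(s^\circ s)(s^\circ s) = s^\circ (s s^\circ s) = s^\circ s$ by the defining relation --- and that every idempotent $f$ is its own inverse, since $f f f = f$ forces $f^\circ = f$ by uniqueness; hence $f = f f^\circ$ has the form $u u^\circ$, so Proposition~\ref{prop-defi-equiv-invcat}(ii) applies to \emph{any} two idempotent endomorphisms of the same object.

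For (i) I would use uniqueness of inverses: it suffices to check that $t^\circ s^\circ$ satisfies the two defining equations relative to $st$. Associativity rewrites $(st)(t^\circ s^\circ)(st)$ as $s\,(t t^\circ)(s^\circ s)\,t$; here $t t^\circ$ and $s^\circ s$ are idempotent endomorphisms of a common object, so they commute, and after interchanging them the relations $s s^\circ s = s$ and $t t^\circ t = t$ collapse the whole expression to $st$. The identity $(t^\circ s^\circ)(st)(t^\circ s^\circ) = t^\circ s^\circ$ is verified in the same fashion, this time finishing with $s^\circ s s^\circ = s^\circ$ and $t^\circ t t^\circ = t^\circ$. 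Uniqueness of the inverse of $st$ then yields $(st)^\circ = t^\circ s^\circ$.

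For (ii) I would compute directly: $(s e s^\circ)(s e s^\circ) = s\,e\,(s^\circ s)\,e\,s^\circ$, and since $e$ commutes with the idempotent $s^\circ s$, the inner block $e (s^\circ s) e$ reduces to $e (s^\circ s)$ (using $e e = e$), after which $s^\circ s s^\circ = s^\circ$ leaves $s e s^\circ$; thus $s e s^\circ$ is idempotent. The argument for $t f t^\circ$ is identical, with $t^\circ t$ playing the role of $s^\circ s$ and $t^\circ t t^\circ = t^\circ$ performing the final simplification. There is no real obstacle here; the only point demanding attention is the bookkeeping of domains and codomains, to be sure that each pair of idempotents being interchanged genuinely lies in the same endomorphism monoid, so that Proposition~\ref{prop-defi-equiv-invcat}(ii) may be invoked.
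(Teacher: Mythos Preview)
Your argument is correct and is the standard one: uniqueness of inverses for (i), and commutativity of idempotents together with the regularity relations for (ii). The paper itself does not supply a proof of this proposition --- it is quoted from \cite{linckelmann-inverse} without argument --- so there is nothing in the text to compare against.

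One remark on your closing caveat about domain/codomain bookkeeping: the statement as printed actually contains typos (with $s,t$ both typed $X\to Y$ the composite $st$ is not defined, and the asserted targets in (ii) are off), so some silent retyping is indeed required before the proof can even begin; your proof implicitly works with the intended types, which is the right thing to do.
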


For any pair of arrows $s,t:X\rightarrow Y$ in an inverse category $\mathcal{C}$, we write $s\leqslant t \text{ if } s=te \text{ for some idempotent } e:X\rightarrow X$. Analogously to the natural order of inverse semigroups \cite{lawsonlivro}, this relation is a partial order on the inverse category $\mathcal{C}$ \cite{linckelmann-inverse} and it has the following equivalent characterizations.

\begin{proposition}[\cite{linckelmann-inverse}]\label{prop-inv-cat-ord}
	Let $(\mathcal{C}, (\:\:)^\circ)$ be an inverse category and let $s,t:X\rightarrow Y$ and $f:Y\rightarrow Y$ be arrows. The following are equivalent:
	\begin{enumerate}[(i)]
		\item $s\leqslant t$;
		\item $s=ft$ for some idempotent morphism $f$;
		\item $s=ss^\circ t$;
		\item $s=ts^\circ s$.
	\end{enumerate}
	Moreover, if $p,q:Y\rightarrow Z$ is another pair of arrows, such that $p\leqslant q$ and exists $ps$ and $qt$, then $ps\leqslant qt$.
\end{proposition}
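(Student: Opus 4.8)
The plan is to establish the cyclic chain of implications (i)$\Rightarrow$(iii)$\Rightarrow$(ii)$\Rightarrow$(iv)$\Rightarrow$(i), using only two elementary facts about idempotents in an inverse category. First, every idempotent $e$ is self-inverse: since $eee=e$, the uniqueness in Definition \ref{defi-inv-cat} forces $e^{\circ}=e$. Second, two idempotents $e,f\colon X\to X$ with a common domain commute: writing $e=ee^{\circ}$ and $f=ff^{\circ}$, this is precisely the identity in Proposition \ref{prop-defi-equiv-invcat}(ii). I will also use $(st)^{\circ}=t^{\circ}s^{\circ}$ (Proposition \ref{prop-inv-cat-props}(i)) and that $s^{\circ}s$ and $ss^{\circ}$ are idempotent, which is immediate from $ss^{\circ}s=s$.

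For (i)$\Rightarrow$(iii): write $s=te$ with $e\colon X\to X$ idempotent; then, using $e^{\circ}=e$ and $ee=e$, $ss^{\circ}=(te)(e^{\circ}t^{\circ})=tet^{\circ}$, so $ss^{\circ}t=te(t^{\circ}t)$; since $e$ and $t^{\circ}t$ are idempotents on $X$ they commute, giving $ss^{\circ}t=t(t^{\circ}t)e=(tt^{\circ}t)e=te=s$. The implication (iii)$\Rightarrow$(ii) is immediate with the idempotent $f=ss^{\circ}\colon Y\to Y$. For (ii)$\Rightarrow$(iv): if $s=ft$ with $f\colon Y\to Y$ idempotent then $s^{\circ}s=(t^{\circ}f)(ft)=t^{\circ}ft$, hence $ts^{\circ}s=(tt^{\circ})ft$; now $tt^{\circ}$ and $f$ are commuting idempotents on $Y$, so $ts^{\circ}s=f(tt^{\circ}t)=ft=s$. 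Finally (iv)$\Rightarrow$(i) is immediate, taking $e=s^{\circ}s\colon X\to X$. This closes the cycle. (Condition (ii) is to be read with the idempotent having domain and codomain $Y$, matching the $f$ in the hypothesis.)

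For the last assertion I would read the statement as also assuming $s\leqslant t$, and then derive $ps\leqslant qt$ from transitivity of $\leqslant$ (already established before the proposition) together with two one-sided steps. If $s=te$ with $e$ idempotent then $ps=(pt)e$, so $ps\leqslant pt$; and, applying the equivalence (i)$\Leftrightarrow$(ii) to $p\leqslant q$, write $p=gq$ with $g\colon Z\to Z$ idempotent, so that $pt=g(qt)$ and hence $pt\leqslant qt$ by the same equivalence. Therefore $ps\leqslant pt\leqslant qt$, whence $ps\leqslant qt$.

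There is no serious obstacle here: the argument is bookkeeping around the cancellation $ss^{\circ}s=s$, and the only place the inverse-category structure genuinely enters is the use of commutativity of idempotents with a common domain in the steps (i)$\Rightarrow$(iii) and (ii)$\Rightarrow$(iv). The one point requiring attention throughout is the typing, so that each composite that occurs is actually defined; note in particular that the composition convention in force is $\alpha\beta=\alpha\circ\beta$, consistent with writing $s=te$ for $s,t\colon X\to Y$ and $e\colon X\to X$.
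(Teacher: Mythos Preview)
Your argument is correct. The cyclic chain (i)$\Rightarrow$(iii)$\Rightarrow$(ii)$\Rightarrow$(iv)$\Rightarrow$(i) goes through exactly as you describe, the two nontrivial steps being handled by commutativity of idempotents with common domain, and the compatibility with composition follows from the one-sided monotonicity steps plus transitivity. Your reading of the last clause (that $s\leqslant t$ is implicitly still in force) is the intended one.

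As for comparison: the paper does not supply its own proof of this proposition; it is quoted from \cite{linckelmann-inverse} without argument. So there is nothing in the paper to compare your approach against.
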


Idempotent morphisms play a fundamental role in our definition of actions. 
The following definition will facilitate the identification of idempotents related to a given arrow.
\begin{definition}\label{defi-Rid-inner-outer-source-target}
	Let $\mathcal{C}$ be an inverse category and $s: X \rightarrow Y$ be a morphism of $\mathcal{C}$.
	\begin{enumerate}[(I)]
		\item The \textit{set of idempotent morphisms} will be denoted by $\rid{C}$. Furthermore, if $X\in\mathcal{C}^{(0)}$, the set of idempotent morphisms in $X$ will be denoted by $\ridx{C}{X}$.
		\item The \textit{outer domain} (or \textit{outer source}) and the \textit{outer range} (or \textit{outer target}) of $s$ are the maps  $od, or: \mathcal{C}\rightarrow \mathcal{C}^{(0)}$ with $od(s)=X \text{ and } or(s)=Y.$
		\item The \textit{inner domain} (or \textit{inner source}) and the \textit{inner range} (or \textit{inner target}) of $s$ are  $id, ir: \mathcal{C}\rightarrow \rid{C}$ with $id(s)=s^\circ s \text{ and } ir(s)=ss^\circ.$
	\end{enumerate}
	
	Combining (II) and (III) we will denote the domain (or source) and the range (or target) pairs maps by: $d=(od,id) \text{  and  } r = (or,ir).$  
	
\end{definition}

Borrowing the idea of the Green classes of inverse semigroups \cite{lawsonlivro} and the set of arrow beginning, or terminating, in a particular object \cite{gilbert-pdgexp}, we have the next definition.

\begin{definition}\label{defi-costars-rsets-inv-cat}
	Let $(\mathcal{C}, (\:\:)^\circ)$ be an inverse category and let $s$ and $t$ be arrows in $\mathcal{C}$.
	\begin{enumerate}[(I)]		
		\item We say that $s,t\in\mathcal{C}$ are $\mathscr{L}$\textit{-related }if $id(s)=id(t)$, or simply $s^\circ s=t^\circ t$. The set of  arrows $\mathscr{L}$-related to $s$ will be denoted by $\mathscr{L}_s$.
		\item We say that $s,t\in\mathcal{C}$ are $\mathscr{R}$\textit{-related }if $ir(s)=ir(t)$, in  other terms $ss^\circ=tt^\circ$. The set of  arrows $\mathscr{R}$-related to $s$ will be denoted by $\mathscr{R}_s$.
		
	\end{enumerate}
	
	Let $X$ and $Y$ be objects in $\mathcal{C}^{(0)}$.
	\begin{enumerate}
		\item [(IV)] The set of all arrows of $\mathcal{C}$ starting in $X$ is $\text{Star}(X):=\{t\in\mathcal{C}; od(t)=X  \}$.
		\item [(III)] The set of all arrows of $\mathcal{C}$ ending in $Y$ is $\text{Costar}(Y):=\{s\in\mathcal{C}; or(s)=Y  \}$.
		
	\end{enumerate}

\end{definition}


\subsection{Cauchy completion}

Let $\mathcal{C}$ be a small category; there is always an auxiliary category, constructed upon the idempotent arrows of $\mathcal{C}$, such that $\mathcal{C}$ may be embedded in this category. This construction is is the Cauchy completion of $\mathcal{C}$ (also called idempotent completion, or Karoubian completion) and in the following  we will present its main aspects. In particular, using the Cauchy completion of an inverse category we can define a groupoid which is analogous to the restriction groupoid of an inverse semigroup \cite{lawsonlivro}.

\begin{definition}\cite{borceux-handbook-I}\label{defi-spli-idp}
	Suppose $X\in\mathcal{C}^{(0)}$ and let $e:X\rightarrow X$ be an idempotent, we say that the \textit{idempotent  $e$ splits} if there are an object $Y\in\mathcal{C}^{(0)}$ and arrows $s:X\rightarrow Y$, $t:Y\rightarrow X$, such that $e=ts \text{  and  } st=1_Y.$
	
	When all idempotents split, we say that $\mathcal{C}$ is a \textit{Cauchy complete} category.
\end{definition}

Let $\mathcal{C}$ be a category, its Cauchy completion $\widehat{\mathcal{C}}$ can be constructed as follows (see for instance \cite{linckelmann-inverse}): 
\begin{itemize}
	\item \underline{objects}: an object of $\widehat{\mathcal{C}}^{(0)}$ is a pair $(X,e)$, where $ X\in\mathcal{C}$ and $e^2=e\in\mathcal{C}(X,X)$;
	\item \underline{arrows}: a morphism is a triple $(e,s,f):(X,e)\rightarrow (Y,f)$, where $s:X\rightarrow Y$ is an arrow of $\mathcal{C}$ satisfying: $se=s=fs$;
	\item \underline{composition}: the composition of $(e,s,f): (X,e)\rightarrow (Y,f)$ and $(f,t,g):(Y,f)\rightarrow (Z,g)$ is the arrow  $(f, t, g)(e,s,f)  =(e,ts,g):(X,e)\rightarrow (Z,g)$. 
\end{itemize}

In the particular case when  $(\mathcal{C}, (\:\:)^\circ)$ is an inverse category,  $(X,e)$ is isomorphic to $(Y,f)$ if, and only if, there is an arrow $s:X\rightarrow Y$ such that $s^\circ s=e \text{  and  } ss^\circ=f.$

\begin{proposition}\cite{linckelmann-inverse}\label{prop-equiv-cauchy-inv}
	Let $\mathcal{C}$ be a small category. The category $\mathcal{C}$ is an inverse category if, and only if, $\widehat{\mathcal{C}}$ is an inverse category.
\end{proposition}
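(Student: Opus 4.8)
The plan is to reduce both implications to the characterization in Proposition~\ref{prop-defi-equiv-invcat}(i): a small category is inverse precisely when every arrow $s\colon X\to Y$ has a \emph{unique} arrow $t\colon Y\to X$ satisfying $sts=s$ and $tst=t$. Throughout I will use the elementary fact that in an inverse category every idempotent $e$ is self-inverse, i.e.\ $e^\circ=e$ (immediate from $e\cdot e\cdot e=e$ and the uniqueness clause of Definition~\ref{defi-inv-cat}), together with $(st)^\circ=t^\circ s^\circ$ from Proposition~\ref{prop-inv-cat-props}(i).

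\emph{Assume $\mathcal{C}$ is inverse.} Given an arrow $\sigma=(e,s,f)\colon(X,e)\to(Y,f)$ of $\widehat{\mathcal C}$, so that $se=s=fs$ in $\mathcal C$, I propose $\tau:=(f,\,s^\circ,\,e)\colon(Y,f)\to(X,e)$ as a candidate inverse. Applying $(\ )^\circ$ to the identities $se=s$ and $fs=s$ and using that $e,f$ are self-inverse yields $es^\circ=s^\circ=s^\circ f$, which is exactly the condition for $\tau$ to be a well-formed arrow of $\widehat{\mathcal C}$. The composition rule of $\widehat{\mathcal C}$ then gives $\sigma\tau\sigma=(e,\,ss^\circ s,\,f)=\sigma$ and $\tau\sigma\tau=(f,\,s^\circ s s^\circ,\,e)=\tau$. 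For uniqueness, any arrow $\rho\colon(Y,f)\to(X,e)$ is necessarily a triple $(f,t,e)$ with $t\colon Y\to X$ in $\mathcal C$; the equations $\sigma\rho\sigma=\sigma$ and $\rho\sigma\rho=\rho$ reduce, again by the composition rule and equality of triples, to $sts=s$ and $tst=t$ in $\mathcal C$, whence $t=s^\circ$ by uniqueness of weak inverses in $\mathcal C$. Hence $\widehat{\mathcal C}$ satisfies Proposition~\ref{prop-defi-equiv-invcat}(i) and is an inverse category.

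\emph{Assume $\widehat{\mathcal C}$ is inverse.} Consider the functor $\iota\colon\mathcal C\to\widehat{\mathcal C}$ sending $X$ to $(X,1_X)$ and $(s\colon X\to Y)$ to $(1_X,s,1_Y)$; it is a well-defined faithful functor, and the arrows $(Y,1_Y)\to(X,1_X)$ of $\widehat{\mathcal C}$ are exactly the triples $(1_Y,t,1_X)$ with $t\colon Y\to X$ in $\mathcal C$, i.e.\ exactly the image under $\iota$ of $\mathcal C(Y,X)$. Fix $s\colon X\to Y$. Since the defining equations of an inverse force it to reverse source and target, the inverse of $\iota(s)$ in $\widehat{\mathcal C}$ has the form $\iota(t)$ for some $t\colon Y\to X$; transporting $\iota(s)\iota(t)\iota(s)=\iota(s)$ and $\iota(t)\iota(s)\iota(t)=\iota(t)$ through the faithful functor $\iota$ yields $sts=s$ and $tst=t$ in $\mathcal C$. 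If $t'\colon Y\to X$ also satisfied $st's=s$ and $t'st'=t'$, then $\iota(t')$ would be a weak inverse of $\iota(s)$, so $\iota(t')=\iota(t)$ by uniqueness in $\widehat{\mathcal C}$ and thus $t'=t$ by faithfulness. Hence $\mathcal C$ satisfies Proposition~\ref{prop-defi-equiv-invcat}(i) and is an inverse category.

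The argument is essentially routine; the only points requiring care are the bookkeeping of the one-sided idempotent conditions $se=s=fs$ when passing to $(\ )^\circ$ (this is where $e^\circ=e$ and $f^\circ=f$ are used), keeping the composition order in $\widehat{\mathcal C}$ straight, and --- in both implications --- deriving the \emph{uniqueness} half by invoking uniqueness of (weak) inverses in the \emph{other} category rather than by direct computation.
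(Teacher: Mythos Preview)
Your proof is correct. Note that the paper does not actually supply a proof of this proposition; it is quoted from \cite{linckelmann-inverse}, so there is no in-paper argument to compare against. Your approach via Proposition~\ref{prop-defi-equiv-invcat}(i) and the full embedding of Proposition~\ref{propo-cauchy-compl-properties}(ii) is the natural one and goes through without issue.
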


Our final considerations about Cauchy completions, for now, are three properties compiled in the next proposition, taken from Borceux \cite{borceux-handbook-I}. 

\begin{proposition}\cite{borceux-handbook-I}\label{propo-cauchy-compl-properties}
	Let $\mathcal{C}$ be a small category, its Cauchy completion $\widehat{\mathcal{C}}$ satisfies:
	\begin{enumerate}[(i)]
		\item $\widehat{\mathcal{C}}$ is small;
		\item $\mathcal{C}$ is a full subcategory of $\widehat{\mathcal{C}}$, where the inclusion functor is defined by $X\in\mathcal{C}^{(0)}\mapsto (X,1_X)\in \widehat{\mathcal{C}}^{(0)}$ and $(s:X\rightarrow Y)\in\mathcal{C} \mapsto (1_X,s,1_Y)\in\widehat{\mathcal{C}}$ ;
		\item the inclusion of $\mathcal{C}$ in $\widehat{\mathcal{C}}$, which sends $X\in\mathcal{C}^{(0)}$ to $(X,1_X)\in\widehat{\mathcal{C}}^{(0)}$ is an equivalence if, and only if, every idempotent in $\mathcal{C}$ splits.
	\end{enumerate}
\end{proposition}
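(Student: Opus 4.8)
The plan is to dispatch the three items in order, with only the third requiring a genuine argument. For (i), I would note that since $\mathcal{C}$ is small both $\mathcal{C}^{(0)}$ and the class of arrows of $\mathcal{C}$ are sets; an object of $\widehat{\mathcal{C}}$ is a pair $(X,e)$ with $X\in\mathcal{C}^{(0)}$ and $e\in\rid{C}$, so $\widehat{\mathcal{C}}^{(0)}$ embeds into $\mathcal{C}^{(0)}\times\rid{C}$, and an arrow $(e,s,f)$ of $\widehat{\mathcal{C}}$ embeds into a triple of arrows of $\mathcal{C}$; hence both classes are sets and $\widehat{\mathcal{C}}$ is small.

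For (ii), write $\iota$ for the proposed assignment. First I would check it lands in $\widehat{\mathcal{C}}$: the triple $(1_X,s,1_Y)$ satisfies $s\cdot 1_X=s=1_Y\cdot s$, so it is a legitimate arrow $(X,1_X)\to(Y,1_Y)$, and functoriality is immediate from $(1_Y,t,1_Z)(1_X,s,1_Y)=(1_X,ts,1_Z)$ together with $\iota(1_X)=(1_X,1_X,1_X)$. The functor $\iota$ is injective on objects; it is faithful because $(1_X,s,1_Y)=(1_X,s',1_Y)$ forces $s=s'$; and it is full because every arrow $(X,1_X)\to(Y,1_Y)$ of $\widehat{\mathcal{C}}$ has the form $(1_X,s,1_Y)$ for a unique arrow $s:X\to Y$ of $\mathcal{C}$. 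Thus $\iota$ identifies $\mathcal{C}$ with the full subcategory of $\widehat{\mathcal{C}}$ spanned by the objects $(X,1_X)$.

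For (iii), since $\iota$ is already fully faithful, it is an equivalence if and only if it is essentially surjective, i.e. every object $(X,e)$ of $\widehat{\mathcal{C}}$ is isomorphic to some $(Y,1_Y)=\iota(Y)$. If every idempotent of $\mathcal{C}$ splits, choose for a given $e:X\to X$ a splitting $e=ts$, $st=1_Y$; using $st=1_Y$ one gets $se=s(ts)=(st)s=s$ and $et=(ts)t=t(st)=t$, so $(e,s,1_Y):(X,e)\to(Y,1_Y)$ and $(1_Y,t,e):(Y,1_Y)\to(X,e)$ are arrows of $\widehat{\mathcal{C}}$, with composites $(1_Y,t,e)(e,s,1_Y)=(e,ts,e)=(e,e,e)=1_{(X,e)}$ and $(e,s,1_Y)(1_Y,t,e)=(1_Y,st,1_Y)=1_{(Y,1_Y)}$; hence $(X,e)\cong\iota(Y)$ and $\iota$ is essentially surjective. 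Conversely, if $\iota$ is essentially surjective then any $(X,e)$ is isomorphic in $\widehat{\mathcal{C}}$ to some $(Y,1_Y)$; writing the isomorphism and its inverse as $(e,a,1_Y)$ and $(1_Y,b,e)$ and computing the two composites by the composition rule of $\widehat{\mathcal{C}}$ yields $ba=e$ and $ab=1_Y$, which is exactly a splitting of $e$. This proves both directions.

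The only step that needs care is the bookkeeping in (iii): checking that the candidate triples genuinely satisfy the defining identities $se=s=1_Y s$ and $et=t=t\,1_Y$ of morphisms in $\widehat{\mathcal{C}}$, and respecting the composition order of $\widehat{\mathcal{C}}$ (as in $(f,t,g)(e,s,f)=(e,ts,g)$) so that the composites come out as the correct identities $(e,e,e)$ and $(1_Y,1_Y,1_Y)$. Everything else is purely formal, and, as expected for a statement about idempotent completion, none of it uses that $\mathcal{C}$ is an inverse category.
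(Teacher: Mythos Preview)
Your proof is correct and is the standard argument. Note, however, that the paper does not supply its own proof of this proposition: it is stated as a known result with a citation to Borceux \cite{borceux-handbook-I}, so there is nothing in the paper to compare your argument against. Your write-up is a clean, self-contained verification of the cited fact, and the bookkeeping in (iii) that you flag (checking $se=s$, $et=t$, and the order of composition in $\widehat{\mathcal{C}}$) is handled correctly.
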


We will return to Cauchy completions further ahead, when we study algebras of Szendrei expansions. Moreover, we will use a groupoid associated to an inverse category.

\begin{definition}\label{defi-restriction-gpd}\cite{linckelmann-inverse}
	The \textit{restriction groupoid} associated to a small category $\mathcal{C}$, by notation $\mathcal{G}_{\mathcal{C}}$, is the subcategory of $\widehat{\mathcal{C}}$ defined by
	\begin{itemize}
		\item \underline{objects}: $\mathcal{G}_{\mathcal{C}}^{(0)}= \widehat{\mathcal{C}}^{(0)}$;
		\item \underline{arrows}: $\mathcal{G}_{\mathcal{C}}=\{  x\in\widehat{\mathcal{C}}; x \text{ is an isomorphism} \}$;
	\end{itemize}
	
\end{definition}

When $\mathcal{C}$ is an inverse category, to each  $e^2=e\in\mathcal{C}(X,X)$ there is an associated group  $$\mathcal{C}_e:=\{s\in\mathcal{C}(X,X); X\in\mathcal{C}^{(0)}, \: ss^\circ=e=s^\circ s \}.$$ It turns out that this group is isomorphic to the automorphism group $\mathcal{G}_{\mathcal{C}}((X,e), (X,e) )$ via $s\in\mathcal{C}_e\mapsto (e,s,e)\in\mathcal{G}_{\mathcal{C}}((X,e), (X,e) ).$

\subsection{The convolution algebra of a finite inverse category}

\begin{definition}[\cite{xu-representations}]\label{defi-conv-alg}
	Let $\mathcal{C}$ be a small category and $\mathbb{K}$ be a commutative ring. The \textit{category algebra}, or \textit{convolution algebra}, $\mathbb{K}\mathcal{C}$ is a free $\mathbb{K}$-module whose basis is the arrow set $\mathcal{C}$. The product  is defined on the basis by $s t=  {s \circ t}$, if  this composition exists, and 
	$s t=0$ if not.
\end{definition}

The convolution algebra is always an algebra with local units; if $\mathcal{C}^{(0)}$ is finite then the convolution algebra is unital, with unit given by $1_{\mathbb{K}\mathcal{C} }=\displaystyle\sum_{e\in\mathcal{C}^{(0)} }1_e$.

We state Linckelmann's isomorphism, which relates the convolution algebra of a finite inverse category to the convolution algebra of its associated groupoid.

\begin{theorem}\label{theo-Linkcelmann-iso}\cite{linckelmann-inverse}
	Let $(\mathcal{C}, (\:\:)^\circ)$ be a finite inverse category and $\mathbb{K}$ be a commutative ring.   The convolution algebra of $\mathcal{C}$, $\mathbb{K}\mathcal{C}$, and the algebra of the restriction groupoid, $\mathbb{K}\mathcal{G}_{\mathcal{C}}$, are isomorphic. 
\end{theorem}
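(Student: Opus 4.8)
The plan is to mimic the classical proof that the semigroup algebra of a finite inverse semigroup is isomorphic to the algebra of its associated (restriction) groupoid, adapting the Möbius-inversion argument to the categorical setting. First I would fix the natural partial order $\leqslant$ on $\calC$ and, for each arrow $s : X \to Y$, consider the element $[s] \in \K\calC$ obtained by "purifying" $s$ relative to the arrows strictly below it. Concretely, since $\calC$ is finite the down-set $\{t : t \leqslant s\}$ is finite, so one can define $[s] := \sum_{t \leqslant s} \mu(t,s)\, t$ where $\mu$ is the Möbius function of the (locally finite) poset $\calC$; inverting, $s = \sum_{t \leqslant s} [t]$. I would first verify that these $\{[s]\}_{s \in \calC}$ form a new $\K$-basis of $\K\calC$ (immediate from Möbius inversion over a finite poset), and then establish the crucial multiplicative identity: for composable $s,t$,
\[
[s]\,[t] \;=\; \begin{cases} [st] & \text{if } s^\circ s = t\,t^\circ,\\[2pt] 0 & \text{otherwise,}\end{cases}
\]
and $[s]\,[t] = 0$ when $s,t$ are not composable. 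This is the heart of the matter.

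Next I would identify $\K\mathcal{G}_{\calC}$ concretely. By Proposition \ref{prop-equiv-cauchy-inv} and the description of $\widehat{\calC}$, an arrow of $\mathcal{G}_{\calC}$ is an isomorphism $(e,s,f) : (X,e) \to (Y,f)$, i.e. a triple with $s^\circ s = e$ and $ss^\circ = f$; composition is $(f,t,g)(e,s,f) = (e,ts,g)$, defined exactly when $e = f$, that is when $s^\circ s = t\,t^\circ$. Thus the multiplication rule just displayed for the $[s]$'s is, basis-element for basis-element, the multiplication rule of $\K\mathcal{G}_{\calC}$ on the basis $\{(s^\circ s,\, s,\, ss^\circ) : s \in \calC\}$. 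I would therefore define the linear map $\Phi : \K\mathcal{G}_{\calC} \to \K\calC$ by $\Phi(s^\circ s, s, ss^\circ) = [s]$ on basis elements and extend linearly; the displayed identity shows $\Phi$ is an algebra homomorphism, it is bijective because both sides have the $\{[s]\}$ (resp. the corresponding groupoid triples) as bases indexed by the same set $\calC$, and when $\calC^{(0)}$ is finite it sends $\sum_e 1_e$ (the unit of $\K\mathcal{G}_{\calC}$, summing over the idempotents $e \in \rid\calC$ with $1_{(X,e)}=(e,e,e)$, which maps to $[e]$, and $\sum_{e}[e]$ is the unit of $\K\calC$ by Möbius inversion applied to the identities) to the unit of $\K\calC$.

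The main obstacle is proving the multiplicative identity for the $[s]$'s. The argument should go: expand $[s][t] = \sum_{u\leqslant s,\, v\leqslant t} \mu(u,s)\mu(v,t)\, uv$, where $uv$ is interpreted as $0$ unless $u,v$ are composable in $\calC$; one must show that after collecting terms this equals $\sum_{w \leqslant st}\mu(w,st)\,w = [st]$ when $s^\circ s = tt^\circ$ and vanishes otherwise. The key combinatorial lemma is that for a \emph{fixed} composable pair $u \leqslant s$, $v \leqslant t$ the product $uv$ in $\calC$ depends on $u,v$ only through $uv$ itself, together with an understanding — using Proposition \ref{prop-inv-cat-ord} and the commutation of idempotents (Proposition \ref{prop-defi-equiv-invcat}(ii)) — of which pairs $(u,v)$ yield a given $w$; this is precisely where finiteness of $\calC$ and the inverse-category axioms combine, and it parallels Steinberg's/Solomon's computation for inverse semigroup algebras (available here through the finite-inverse-category version in \cite{linckelmann-inverse}). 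Once this lemma is in hand, summing the Möbius coefficients over the relevant intervals collapses to the stated formula by the defining property $\sum_{u : w \leqslant u \leqslant s}\mu(u,s) = \delta_{w,s}$. I would also need to rule out contributions when $s^\circ s \neq t\,t^\circ$: here composable $u \leqslant s$, $v \leqslant t$ always exist, but a careful bookkeeping of inner sources/targets shows the Möbius sums cancel in pairs. Alternatively — and this may be the cleaner route to present — I would avoid re-deriving the combinatorics and instead invoke the inverse-category Wagner--Preston embedding together with the known inverse-semigroup result, or simply cite that the construction of \cite{linckelmann-inverse} already carries out exactly this computation, in which case the present statement is a direct quotation and the proof reduces to recalling the isomorphism $\Phi$ above and checking it is unital.
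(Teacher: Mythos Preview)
The paper does not prove this theorem; it is stated as a cited result from \cite{linckelmann-inverse} and used as a black box. Your proposal correctly reconstructs the standard M\"obius-inversion argument (due to Steinberg for inverse semigroups and extended by Linckelmann to inverse categories), so it is essentially the proof one finds in the cited reference rather than an alternative to anything in the present paper.

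One small caution: in the non-composable case your claim that ``composable $u \leqslant s$, $v \leqslant t$ always exist'' is not quite right --- if $od(s) \neq or(t)$ then no $u \leqslant s$ and $v \leqslant t$ are composable, since the outer domain/range are preserved under $\leqslant$; in that case $[s][t]=0$ trivially. The genuinely delicate vanishing is when $od(s)=or(t)$ but $s^\circ s \neq tt^\circ$, and there the cleanest argument is not a direct cancellation but rather to first prove the identity $[e][f] = \delta_{e,f}[e]$ for idempotents $e,f$ (an easy induction on the poset $E(\calC(X))$), then use $[s] = [ss^\circ][s] = [s][s^\circ s]$ to reduce the general case to the idempotent one. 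This is how Linckelmann organizes it, and it sidesteps the bookkeeping you flag as the main obstacle.
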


This result allows us to write $\mathbb{K}\mathcal{C}$ as a groupoid algebra, and groupoid algebras can be realized as direct products of matrix algebras with coefficients in group algebras. Those groups correspond to automorphism groups / isotropy groups of objects of the groupoid. We have the following corollary.

\begin{corollary}\label{coro-rest-gpd-cat-inv-alg}\cite{linckelmann-inverse}
	Let $(\mathcal{C}, (\:\:)^\circ)$ be a finite inverse category and $\mathbb{K}$ be a commutative ring. Let $\mathcal{E}$ be the set of representatives of the isomorphism classes of idempotent endomorphisms in $\mathcal{C}$; denote by $n_e$ the number of idempotents in $\mathcal{C}$ which are isomorphic to $e$. Then the convolution algebra of $\mathcal{C}$ has the representation $\mathbb{K}\mathcal{C}\simeq \displaystyle\bigoplus_{e\in\mathcal{E}}\mathbb{M}_{n_e}(\mathbb{K}\mathcal{C}_e).$
\end{corollary}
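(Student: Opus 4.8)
The plan is to reduce everything to the standard structural description of the algebra of a finite groupoid, using Linckelmann's isomorphism as the bridge. First I would invoke Theorem \ref{theo-Linkcelmann-iso} to replace $\mathbb{K}\mathcal{C}$ by the groupoid algebra $\mathbb{K}\mathcal{G}_{\mathcal{C}}$. Since $\mathcal{C}$ is finite there are only finitely many pairs $(X,e)$ with $e$ an idempotent of $\mathcal{C}(X,X)$, so $\widehat{\mathcal{C}}$, and hence its wide subgroupoid $\mathcal{G}_{\mathcal{C}}$, are finite categories; thus $\mathbb{K}\mathcal{G}_{\mathcal{C}}$ is a finite-dimensional unital $\mathbb{K}$-algebra with unit $\sum_{(X,e)} 1_{(X,e)}$.

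Next I would decompose $\mathcal{G}_{\mathcal{C}}$ into its connected components. Two objects $(X,e)$ and $(Y,f)$ lie in the same component exactly when they are isomorphic in $\widehat{\mathcal{C}}$, which by the characterization recalled just before Proposition \ref{prop-equiv-cauchy-inv} means there is $s\colon X\to Y$ in $\mathcal{C}$ with $s^\circ s=e$ and $ss^\circ=f$. Hence the set of connected components of $\mathcal{G}_{\mathcal{C}}$ is in bijection with the set of isomorphism classes of idempotent endomorphisms of $\mathcal{C}$, i.e. with $\mathcal{E}$, and the component indexed by $e\in\mathcal{E}$ has exactly $n_e$ objects. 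Arrows of $\mathcal{G}_{\mathcal{C}}$ between objects of distinct components do not exist, so in $\mathbb{K}\mathcal{G}_{\mathcal{C}}$ there is no interaction between blocks; therefore $\mathbb{K}\mathcal{G}_{\mathcal{C}}\simeq\bigoplus_{e\in\mathcal{E}}\mathbb{K}\mathcal{H}_e$, where $\mathcal{H}_e$ is the component of $\mathcal{G}_{\mathcal{C}}$ corresponding to $e$, and it remains to identify $\mathbb{K}\mathcal{H}_e$.

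For a connected finite groupoid $\mathcal{H}$ with objects $\{h_1,\dots,h_n\}$, fix $h_1$, set $G=\mathcal{H}(h_1,h_1)$, and for each $i$ choose an arrow $p_i\colon h_1\to h_i$ with $p_1=1_{h_1}$. Every arrow $h_i\to h_j$ is then uniquely of the form $p_j\,g\,p_i^{-1}$ for a unique $g\in G$; sending it to $g\,E_{ji}$ (the elementary matrix) extends $\mathbb{K}$-linearly to an isomorphism $\mathbb{K}\mathcal{H}\xrightarrow{\;\sim\;}\mathbb{M}_n(\mathbb{K}G)$, multiplicativity being a short check on the basis and bijectivity being clear. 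Applying this to each $\mathcal{H}_e$, with $h_1$ taken to be $(X,e)$ for the representative $e\in\mathcal{E}$ and using the isomorphism $\mathcal{C}_e\cong\mathcal{G}_{\mathcal{C}}((X,e),(X,e))$ recorded above, gives $\mathbb{K}\mathcal{H}_e\simeq\mathbb{M}_{n_e}(\mathbb{K}\mathcal{C}_e)$; composing with Linckelmann's isomorphism yields the claim.

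The only genuine content is the matrix description of a connected groupoid algebra; everything else is bookkeeping about components and local units. The point I would be most careful about is that the decomposition into components is compatible with the algebra structure — that the idempotent $\sum_i 1_{h_i}$ of a component maps to the identity matrix of the corresponding block, and that composites across components vanish in $\mathbb{K}\mathcal{G}_{\mathcal{C}}$ — so that the direct-sum decomposition is one of unital algebras. This is where I expect to spend a couple of lines, but it is routine once finiteness of $\mathcal{G}_{\mathcal{C}}$ is in hand.
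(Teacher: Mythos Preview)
Your proposal is correct and follows exactly the route the paper indicates: the paper gives no proof of this corollary beyond citing \cite{linckelmann-inverse} and remarking, just before the statement, that Theorem \ref{theo-Linkcelmann-iso} reduces $\mathbb{K}\mathcal{C}$ to a groupoid algebra, which then decomposes as a direct product of matrix algebras over the isotropy group algebras. Your argument spells out precisely this, with the connected-component decomposition of $\mathcal{G}_{\mathcal{C}}$ and the identification $\mathcal{G}_{\mathcal{C}}((X,e),(X,e))\cong\mathcal{C}_e$ providing the bookkeeping.
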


\section{Actions of inverse categories}\label{section-actions}
We begin with a definition of action of a inverse category on a  poset which extends both the concepts of fibred category actions on sets \cite{maclane-sheaves} and of fibred groupoid actions on posets introduced by Miller \cite{miller-thesis-structure}. Our definition considers the outer and inner structures of an inverse category.

\subsection{Actions of inverse categories on posets}

\begin{definition}\label{defi-invcat-action}
	Let $(\mathcal{C}, (\:\:)^\circ)$ be an inverse category and $(P,\leqslant)$ be a partially ordered set. A \textit{fibred ordered action}, where
	\begin{itemize}  
      \item $\rho:P\rightarrow \mathcal{C}^{(0)}\times \rid{C}$, $\rho(x) =  (o\rho(x),i\rho(x))$, \emph{moment map}, is such that $od(i\rho(x)) = o\rho(x)$ and $i\rho$ is an order preserving map;
		\item $\theta: \mathcal{C} {_d\times _\rho} P\rightarrow P$ is the action map, where 
  \[
\mathcal{C} {_d\times _\rho} P=\{ (s,x)\in \mathcal{C}\times P; o\rho(x)=od(s),\: i\rho(x)\leqslant id(s) \} \ ; 
  \]  the image of the pair $(s,x)$ by $\theta$ will be denoted, as usual, by $\theta_s(x)$, and for each $s\in \mathcal{C}$ the map 
  \[
\theta_s : \{ x \in P ; (s,x) \in \mathcal{C} {_d\times _\rho} P\} \to P, \ \ \ \ x \mapsto \theta_s(x),
  \] is order-preserving. 
\item 	For every $x \in P$, $s,t \in \mathcal{C}$, 
	
	\begin{enumerate}[(I)]
		\item  $\theta_{i\rho(x)}(x)=x$;
		\item $o\rho(\theta_s(x))=or(s)$, $i\rho(\theta_s(x)) \leq ir(s)$, and $i\rho(\theta_s(x))=ir(s)$ if $id(s)=i \rho (x)$; 
		\item $\theta_s(\theta_t(x))=\theta_{st}(x)$ if $(t,x) \in \mathcal{C} {_d\times _\rho}$ and the composition $st$ in defined in $\mathcal{C}$. 
	\end{enumerate}

\end{itemize}

	Notation: $(\rho,\theta):(\mathcal{C},(\:\:)^{\circ})\curvearrowright (P,\leqslant)$ will denote a fibred ordered action.
\end{definition}

As we will deal only with ordered fibred actions, we will call  then simply \textit{fibred actions}.

\begin{example}
Let     $(\mathcal{C},(\:\:)^{\circ})$ be an inverse category and let $\pc$ be the underlying poset of $\mathcal{C}$. Then there is a canonical fibred action of  $\mathcal{C}$ on $\pc$ given by the following: $o\rho(s) = or(s)$, $i \rho(s) = ir(s)$, and $\theta (s,x) = s x$. In fact, $\theta$ is well-defined because $\mathcal{C}_d \times {}_\rho \pc \subseteq \mathcal{C}_d \times {}_r \mathcal{C}$, and the other conditions that define a fibre action are satisfied as consequences of the definition and basic properties of an inverse category.
\end{example}

\begin{example} (action on idempotents)
Let     $(\mathcal{C},(\:\:)^{\circ})$ be an inverse category and let $Q$ be the underlying poset of $\rid{C}$. The inverse category $\mathcal{C}$ acts on $Q$ by the following: $o\rho(e) = or(e)$, $i \rho(e) = e$, and $\theta (s,e) = s e s^{0}$. 
\end{example}

\begin{proposition} \label{proposition.theta_s.bijection}
Let $(\rho, \theta)$ be a fibred action of the inverse category $\mathcal{C}$ on the poset $P$.
\begin{enumerate}[(i)]
    \item If $e \in E(\mathcal{C})$ and $i \rho(x) \leq id(e) = e$ then  $\theta_e(x) = x$.
    \item Let $D_s = \{x \in P ; (s^\circ,x) \in \mathcal{C} {_d\times _\rho} P \}$. Then $\theta_s : D_{s^\circ} \to D_s$ is a bijective order-preserving map. 
\end{enumerate}
\end{proposition}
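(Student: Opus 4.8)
The plan is to deduce both items directly from the defining axioms (I)--(III) of a fibred action, using only the elementary facts about idempotents and the natural order recorded in Propositions \ref{prop-inv-cat-props} and \ref{prop-inv-cat-ord}.

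For (i), I would first note that the hypothesis $i\rho(x)\leqslant id(e)$ already controls the outer data: the natural order compares only parallel arrows, and $id(e)=e^\circ e=e$ is an endomorphism of $od(e)$, so $i\rho(x)\leqslant e$ forces $o\rho(x)=od(i\rho(x))=od(e)$; hence $(e,x)\in\mathcal{C}{_d\times _\rho}P$ and $\theta_e(x)$ is indeed defined. Put $f=i\rho(x)$. Axiom (I) gives $\theta_f(x)=x$, and $(f,x)\in\mathcal{C}{_d\times _\rho}P$ because $id(f)=f$ and $f\leqslant f$. Since $f$ and $e$ are parallel idempotents with $f\leqslant e$, Proposition \ref{prop-inv-cat-ord}(iv) (with $t=e$, $s=f$, using $f^\circ f=f$) yields $ef=f$, the composite being legitimate as $e,f$ are endomorphisms of $od(e)$. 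Then axiom (III) gives
\[
\theta_e(x)=\theta_e(\theta_f(x))=\theta_{ef}(x)=\theta_f(x)=x,
\]
which is (i).

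For (ii), observe first that $D_{s^\circ}$ is exactly the natural domain of the partial map $\theta_s$ of Definition \ref{defi-invcat-action} (since $s^{\circ\circ}=s$), so $\theta_s$ is defined and order-preserving on $D_{s^\circ}$ by hypothesis, and likewise $\theta_{s^\circ}$ on $D_s$. The steps would be: (a) $\theta_s(D_{s^\circ})\subseteq D_s$, since for $x\in D_{s^\circ}$ axiom (II) gives $o\rho(\theta_s(x))=or(s)=od(s^\circ)$ and $i\rho(\theta_s(x))\leqslant ir(s)=ss^\circ=id(s^\circ)$; (b) symmetrically $\theta_{s^\circ}(D_s)\subseteq D_{s^\circ}$, applying (a) to $s^\circ$; (c) $\theta_{s^\circ}\circ\theta_s=\mathrm{id}_{D_{s^\circ}}$, because for $x\in D_{s^\circ}$ the composite $s^\circ s$ exists (an endomorphism of $od(s)$) and $(s,x)\in\mathcal{C}{_d\times _\rho}P$, so axiom (III) gives $\theta_{s^\circ}(\theta_s(x))=\theta_{s^\circ s}(x)$, and as $s^\circ s$ is idempotent (Proposition \ref{prop-inv-cat-props}) with $id(s^\circ s)=s^\circ s\geqslant i\rho(x)$, part (i) gives $\theta_{s^\circ s}(x)=x$; (d) symmetrically $\theta_s\circ\theta_{s^\circ}=\mathrm{id}_{D_s}$. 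Combining (a)--(d), $\theta_s\colon D_{s^\circ}\to D_s$ and $\theta_{s^\circ}\colon D_s\to D_{s^\circ}$ are mutually inverse order-preserving bijections (so $\theta_s$ is in fact an order isomorphism).

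The whole argument is essentially bookkeeping: the only point requiring genuine care is to check, at each invocation of axiom (III), that the relevant composite exists in $\mathcal{C}$ and that the relevant pair lies in $\mathcal{C}{_d\times _\rho}P$; the one structural input is the identity $ef=f$ for parallel idempotents $f\leqslant e$, which is precisely what lets part (i) close the argument for (ii). I do not anticipate any real obstacle beyond this housekeeping.
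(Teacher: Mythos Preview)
Your proof is correct and follows essentially the same route as the paper: for (i) you use axiom (I) to write $x=\theta_{i\rho(x)}(x)$, then axiom (III) together with $e\,i\rho(x)=i\rho(x)$ to collapse $\theta_e(\theta_{i\rho(x)}(x))$ to $x$; for (ii) you use axiom (II) to show $\theta_s(D_{s^\circ})\subseteq D_s$ and then axiom (III) with part (i) to get $\theta_{s^\circ}\theta_s=\mathrm{id}$ and symmetrically. Your version is slightly more careful than the paper's in checking that $(e,x)\in\mathcal{C}{_d\times_\rho}P$ and in noting that the inverse map is also order-preserving, but the argument is the same.
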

\begin{proof}
    It follows from (I) and (III) that if $e \in E(\mathcal{C})$ and $i \rho(x) \leq id(e) = e$ then  $\theta_e(x) = x$, since  $i \rho(x) \leq id(e) = e$ if and only if $e \ i\rho(x) = i\rho(x)$, and therefore $\theta_e(x) = \theta_e(\theta_{i\rho(x)}(x))= \theta_{e\ i\rho(x)}(x) =\theta_{i\rho(x)}(x) = x$.
 
    Given a fibred action $\theta$ of $\mathcal{C}$ on $P$, for $s \in \mathcal{C}$ let $D_s$ denote the domain of $\theta_s$, $$D_s = \{x \in P ; (s^\circ,x) \in \mathcal{C} {_d\times _\rho} P \}.$$ Then $\theta_s$ is a bijection from $D_{s^\circ} $ to $D_s$. In fact, $\theta_s(x)$ is defined if and only if $x \in D_{s^\circ} $ and, by Def. \ref{defi-invcat-action} (II), $\theta_s(x) \in D_{s} $. Using (III) and the previous item one concludes that $\theta_{s^\circ} \theta_s = Id_{D_{s^\circ}}$ and $\theta_s \theta_{s^\circ} = Id_{D_{s}}$.
\end{proof}

\begin{remark} 
Consider a fibred action of $\mathcal{C}$ on $P$. Then $P_X = o \rho^{-1}(X)$ is an ideal of $P$ for each $X \in \mathcal{C}^{(0)}$ and $P = \sqcup_{X \in \mathcal{C}^{(0)}}P_X $ (disjoint union). This follows straight from the definitions: 
if $p\in P_X$ and $q \in P$ is such that $p \geq q$ then $i \rho (p) \geq i \rho (q)$ and, by the definition of order in $\mathcal{C}$ and the fact that $i \rho(p)$ and $i \rho(q)$ are idempotent morphisms, 
 $
i \rho (q) = i \rho(q) (i \rho(q))^0 i \rho (p) = i \rho (q) i \rho (p).  
 $
In particular $i \rho (p)$ and $i \rho(q)$ are composable idempotent morphisms and therefore $X = o \rho(p) = o \rho(q)$, hence $q \in P_X$.
\end{remark}

In {\cite{gilbert-pdgexp}} Gilbert investigates groupoid actions by symmetries. 
We will present another definition of action of an inverse category on a poset which includes the former as a particular case. In order to do that we must define a new inverse category associated to a given poset.  A similar construction, without the ordering, has already appeared in Linckelmann \cite{linckelmann-inverse} and Nystedt-Oinert-Pined \cite[Def. 12]{nystedt-pinedo-epsilon}.  
 Let $(P,\leqslant)$ be a poset, in this case, we will write ``$I \triangleleft P$''. For technical reasons, we include the empty set as an ideal. 
\begin{proposition} Let  $(P,\leqslant)$ be a poset. The following data determine an inverse category $\iicp$:
\begin{itemize}
	\item \underline{objects}: the objects of $\iicp$  are the ideals of $P$;
	\item \underline{arrows}: given order ideals $U,V \triangleleft P$, a morphism from $U$ to $V$ is a order preserving bijection $s:U'\rightarrow V'$, where $U' \triangleleft U$ and $V' \triangleleft V$;
	\item \underline{composition}: let $U, V, W \triangleleft P$, also $U' \triangleleft U, V'\triangleleft V, V''\triangleleft V,  W' \triangleleft W$, and let  $s:U'\rightarrow V'$ and $t: V''\rightarrow W'$ be order preserving bijections; the composition $ts$ is defined by $$ts=t_{|V'\cap V''}\circ s_{|s^{-1}(V'\cap V'') }: s^{-1}(V'\cap V'')\rightarrow t(V'\cap V'');$$ 
	\item \underline{inverses}: the inverse structure is the inversion of maps.
\end{itemize}

\end{proposition}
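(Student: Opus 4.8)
The plan is to realize $\iicp$ as the Cauchy completion of a concrete inverse monoid and then invoke Proposition~\ref{prop-equiv-cauchy-inv}. Let $\mathcal{I}(P)$ be the symmetric inverse monoid on the underlying set of $P$, i.e.\ the inverse monoid of all partial bijections of $P$ under composition of partial maps, which is an inverse monoid by the classical theory \cite{lawsonlivro}. Inside it, let $\Lambda$ be the subset of those partial bijections $s$ which are order isomorphisms from $\mathrm{dom}(s)$ onto $\mathrm{im}(s)$ and whose domain and image are both order ideals of $P$. (One must read the morphisms of $\iicp$ as order \emph{isomorphisms} between ideals --- equivalently, partial bijections $s$ with $s$ and $s^{-1}$ both order preserving --- since this is already what it takes for the prescribed inverse structure, inversion of maps, to take values in $\iicp$.)

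First I would verify that $\Lambda$ is an inverse submonoid of $\mathcal{I}(P)$. It contains $\mathrm{id}_P$, and it is closed under the inversion of $\mathcal{I}(P)$, because the inverse of an order isomorphism with ideal domain and image is again of this form. For closure under composition, observe that for $s,t\in\Lambda$ the composite $ts$ in $\mathcal{I}(P)$ has domain $s^{-1}(\mathrm{im}(s)\cap\mathrm{dom}(t))$ and image $t(\mathrm{im}(s)\cap\mathrm{dom}(t))$; since the intersection of two ideals is an ideal, the preimage of an ideal under an order-preserving map is an ideal, and the image of an ideal under an order isomorphism is an ideal, both the domain and the image of $ts$ are ideals, and $ts$ is an order isomorphism between them. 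As $\mathcal{I}(P)$ is inverse and $\Lambda$ is closed under its product and its inversion, $\Lambda$ is an inverse monoid --- hence a one-object inverse category, small because $P$ is a set.

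Next I would check that $\iicp$ is isomorphic to the Cauchy completion $\widehat{\Lambda}$. The idempotents of $\Lambda$ are precisely the identity maps $\mathrm{id}_I$ with $I\triangleleft P$, so the objects of $\widehat{\Lambda}$ correspond bijectively to the objects of $\iicp$. A morphism $(\mathrm{id}_U,s,\mathrm{id}_V)$ of $\widehat{\Lambda}$ is an $s\in\Lambda$ with $s\,\mathrm{id}_U=s=\mathrm{id}_V\,s$, i.e.\ with $\mathrm{dom}(s)\subseteq U$ and $\mathrm{im}(s)\subseteq V$, which is exactly a morphism $U\to V$ of $\iicp$ (set $U'=\mathrm{dom}(s)$, $V'=\mathrm{im}(s)$, noting that an ideal of $P$ contained in $U$ is the same thing as an ideal of $U$). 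The composition in $\widehat{\Lambda}$ is composition of partial maps in $\Lambda$, which unwinds precisely to the formula $ts=t|_{V'\cap V''}\circ s|_{s^{-1}(V'\cap V'')}$ of the statement, because $\mathrm{im}(s)\subseteq V'$ forces $s^{-1}(V'')=s^{-1}(V'\cap V'')$; and the identity of $U$ is $\mathrm{id}_U\colon U\to U$. Hence $\iicp\cong\widehat{\Lambda}$, and since $\Lambda$ is an inverse category, Proposition~\ref{prop-equiv-cauchy-inv} gives that $\iicp$ is an inverse category, with inverse operation $s\mapsto s^{-1}$.

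I do not anticipate a serious obstacle: the whole content is bookkeeping --- that order ideals are stable under the operations used (binary intersection, preimage along an order-preserving map, image along an order isomorphism), together with the routine identification of the composition formula in the statement with composition of partial maps. If one prefers a self-contained argument avoiding $\mathcal{I}(P)$, the same facts let one check $\iicp$ directly: composition of partial maps is associative and the $\mathrm{id}_U$ are two-sided units, and each arrow $s\colon U'\to V'$ has $s^{-1}$ as its unique inverse in the sense of Definition~\ref{defi-inv-cat}, with associated idempotents $ss^{-1}=\mathrm{id}_{V'}$ and $s^{-1}s=\mathrm{id}_{U'}$.
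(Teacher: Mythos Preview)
Your proof is correct but takes a genuinely different route from the paper. The paper's argument is a one-liner: it observes that $\iicp$ is an inverse subcategory of the category of partial bijections of \cite[Def.~12]{nystedt-pinedo-epsilon}, and the verification amounts to the same closure properties you check (ideals stable under intersection, preimage, and image under order isomorphisms). Your approach instead packages $\iicp$ as the Cauchy completion $\widehat{\Lambda}$ of the one-object inverse category $\Lambda\subseteq\mathcal{I}(P)$ and invokes Proposition~\ref{prop-equiv-cauchy-inv}. This is more self-contained (no external reference needed) and pleasantly anticipates the paper's later heavy use of Cauchy completions; the paper's argument, by contrast, is shorter but outsources the ambient inverse-category structure. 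You also make explicit a point the statement leaves tacit: the arrows must be order \emph{isomorphisms}, not merely order-preserving bijections, for inversion of maps to land back in $\iicp$.
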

The proof follows from showing that $\iicp$ is an inverse subcategory of the category of partial bijections defined in \cite[Def. 12]{nystedt-pinedo-epsilon}.

\begin{remark} 
\label{remark.domain.Iic(P)}
Let $f: U' \to V' \in Hom_{\iicp}(U,V)$ be a partial bijection. Then $U' = \{x \in P ; \exists f^{-1} f(x) \textrm{ and } f^{-1} f(x)=x\}$. In fact, if $x \in U'$ then $f(x)$ is defined and $f(x) = f f^{-1}f(x)$; since $f$ is bijective we have that $x = f^{-1}f(x)$. The converse is clear. Since $f^{-1}f$ is an idempotent morphism, we conclude that the domain of $f$ coincides with the domain of $f^{-1}f$.
\end{remark}

\begin{definition}\label{defi-invcat-actions-via-symm}
	Given an inverse category $\mathcal{C}$ and a poset $(P,\leqslant)$, an \textit{inverse category action via symmetries}  (or \textit{automorphisms}) of $\mathcal{C}$ on $(P,\leqslant)$ is  a functor $\Theta: \mathcal{C}\rightarrow \iicp$ such that $P = \bigcup_{X \in \mathcal{C}^{(0)}} \Theta(X)$.
\end{definition}

Now, we can relate fibred actions to actions by symmetries. The lemma below shows how to define maps among fibers from fibred actions.

\begin{lemma}\label{lemma-invcat-action-symmetries} 
	Each fibred action of an inverse category $\mathcal{C}$ on a poset $P$ determines an action of $\mathcal{C}$ via symmetries on $P$.
	
\end{lemma}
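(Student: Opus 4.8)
The plan is to extract, from the fibred action $(\rho,\theta)$, a functor $\Theta : \mathcal{C} \to \iicp$ and verify the surjectivity condition $P = \bigcup_{X \in \mathcal{C}^{(0)}} \Theta(X)$. First I would define $\Theta$ on objects by sending $X \in \mathcal{C}^{(0)}$ to the ideal $\Theta(X) = P_X = o\rho^{-1}(X)$, which is indeed an ideal of $P$ by the Remark following Proposition \ref{proposition.theta_s.bijection}. On arrows, for $s : X \to Y$ in $\mathcal{C}$ I would set $\Theta(s) = \theta_s$; by Proposition \ref{proposition.theta_s.bijection}(ii), $\theta_s$ is an order-preserving bijection from $D_{s^\circ}$ to $D_s$, so I must check that $D_{s^\circ}$ is an order ideal of $P_X = \Theta(X)$ and $D_s$ is an order ideal of $P_Y = \Theta(Y)$, making $\theta_s$ a legitimate morphism in $\iicp$ from $\Theta(X)$ to $\Theta(Y)$. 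That $D_{s^\circ} \subseteq P_X$ follows from the moment map condition $o\rho(x) = od(s^\circ) = or(s)$... wait — more carefully, $(s^\circ,x)\in\mathcal{C}{_d\times_\rho}P$ forces $o\rho(x) = od(s^\circ) = X$, so $D_{s^\circ}\subseteq P_X$; that it is downward closed inside $P_X$ uses that $i\rho$ is order-preserving together with the defining condition $i\rho(x) \leq id(s^\circ)$ and the fact (proved in the Remark) that $q\leq p$ and $i\rho(p)\leq e$ idempotent implies $i\rho(q) = i\rho(q)i\rho(p) \leq e$.

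Next I would verify functoriality. For identities: $\Theta(1_X) = \theta_{1_X}$; since $1_X$ is idempotent with $id(1_X) = 1_X$, Proposition \ref{proposition.theta_s.bijection}(i) gives $\theta_{1_X}(x) = x$ whenever $i\rho(x) \leq 1_X$, i.e. on all of $P_X$, so $\Theta(1_X) = \mathrm{id}_{P_X}$, the identity morphism on $\Theta(X)$ in $\iicp$. For composition: given $s : X \to Y$ and $t : Y \to Z$, I must show $\Theta(ts) = \Theta(t)\Theta(s)$, i.e. $\theta_{ts} = \theta_t \circ \theta_s$ as morphisms in $\iicp$. The composite $\theta_t \circ \theta_s$ in $\iicp$ is, by the definition of composition there, the restriction of $\theta_t \circ \theta_s$ to $\theta_s^{-1}(D_s \cap D_t)$. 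Axiom (III) of Definition \ref{defi-invcat-action} gives $\theta_s(\theta_t(x)) = \theta_{st}(x)$ on the appropriate domain — I'll need to be careful with the order of composition and apply it with $s,t$ swapped — so the underlying partial maps agree; the remaining work is to check the domains match, namely that $D_{(ts)^\circ} = D_{s^\circ t^\circ}$ equals $\theta_s^{-1}(D_s \cap D_t)$ as subsets of $P$. This is a domain-bookkeeping computation using Remark \ref{remark.domain.Iic(P)}, axiom (II) (which controls $i\rho(\theta_s(x))$ relative to $ir(s)$ and hence whether $\theta_t$ is defined at $\theta_s(x)$), and the characterizations of $\leq$ in Proposition \ref{prop-inv-cat-ord}.

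Finally, the surjectivity condition $P = \bigcup_{X\in\mathcal{C}^{(0)}} \Theta(X) = \bigcup_X P_X$ is exactly the decomposition $P = \sqcup_{X\in\mathcal{C}^{(0)}} P_X$ recorded in the Remark after Proposition \ref{proposition.theta_s.bijection}, so this is immediate. By Proposition \ref{prop.functors.inverse.categories} the functor automatically respects the inverse structure, so nothing extra is needed there, though one can also note directly that $\Theta(s^\circ) = \theta_{s^\circ} = \theta_s^{-1} = \Theta(s)^{-1}$ in $\iicp$ using Proposition \ref{proposition.theta_s.bijection}(ii).

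The main obstacle I expect is the composition-of-domains verification: showing that the domain of $\theta_{ts}$, computed via the abstract fibred-action conditions, coincides with the domain $\theta_s^{-1}(D_s\cap D_t)$ prescribed by composition in $\iicp$. The subtlety is that in a fibred action the condition for $(t,x)$ to lie in $\mathcal{C}{_d\times_\rho}P$ involves the inequality $i\rho(x) \leq id(t)$ rather than equality, so one must track how $i\rho(\theta_s(x))$ compares with $ir(s) = id(t^\circ)$ — this is precisely what clause (II) of Definition \ref{defi-invcat-action} is designed to handle, including its case distinction on whether $id(s) = i\rho(x)$ — and then feed the result through the order-theoretic lemmas. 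Everything else is routine given the groundwork already laid in Propositions \ref{proposition.theta_s.bijection} and \ref{prop-inv-cat-ord} and the two Remarks.
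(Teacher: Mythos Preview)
Your proposal is correct and follows essentially the same route as the paper: define $\Theta(X)$ as the fibre over $X$ (which is exactly the paper's $D_{1_X}$, since $i\rho(x)\leqslant 1_X$ is automatic), set $\Theta(s)=\theta_s$, check that each $D_{s^\circ}$ is an order ideal, and invoke axioms (I) and (III) for functoriality. The paper simply asserts functoriality from those axioms without writing out the domain-matching for composition that you flag as the main obstacle; your more detailed bookkeeping there is a sound elaboration of what the paper leaves implicit. (One small slip: when you check $D_{s^\circ}\subseteq P_X$ you should be unpacking $(s,x)\in\mathcal{C}{_d\times_\rho}P$, not $(s^\circ,x)$, but your conclusion is right.)
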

\begin{proof}
	Let $(\rho,\theta): (\mathcal{C}, (\:\:)^{\circ})\curvearrowright (P,\leqslant)$ be a fibred action of the inverse category $(\mathcal{C},(\:\:)^\circ)$ on the poset $(P,\leqslant)$. It follows from the definition of fibred action and from Proposition  \ref{proposition.theta_s.bijection} that to each $s\in \mathcal{C}$ there corresponds a bijective order-preserving map 	$$\theta_s:D_{s^\circ}\rightarrow D_s, \ \  \ \theta_s(x) = \theta(s,x),$$ 
 whose domain and range are (respectively) the sets
	$D_{s^{\circ}}:=\{x\in P; o\rho(x)=od(s),\: i\rho(x)\leqslant id(s)  \}$, and $D_{s}:=\{y\in P; o\rho(y)=or(s),\: i\rho(y)\leqslant ir(s)  \}$.

  	Let $x\in P$ and $y\in D_{s^\circ}$  such that $x\leqslant y$ and $o\rho(x)=o\rho(y)$; since the map $i\rho$ is order preserving, by Definition \ref{defi-invcat-action}, we have $i\rho(x)\leqslant i\rho(y)$ and by the definition of $D_{s^\circ}$ it follows that $i\rho(x)\leqslant i\rho(y)\leqslant id(s)$, hence $D_{s^\circ}$ is an order ideal.

		If $s \in \mathcal{C}(X,Y)$ and $1_X: X\rightarrow X$ is the identity of $X$ then $id(s) = s^{\circ}s \leq 1_X$, and therefore $D_s \triangleleft D_{1_X}$; hence $\theta_s$ is a morphism $\theta_s : D_{1_X} \to D_{1_Y}$ in $\iicp$.
  Moreover, it follows that 
 $P=\bigcup_{X\in {\mathcal{C}}^{(0)}} D_{1_X}$. 
	
 Combining the previous computations we define the functor $\Theta: \mathcal{C}\rightarrow \iicp$ by $X\in\mathcal{C}^{(0)}\mapsto \Theta(X):=D_{1_X} \text{  and  } s\in\mathcal{C}\mapsto \Theta(s):=\theta_s.$ As $(\theta,\rho)$ is a fibred action, by  Definition \ref{defi-invcat-action}, items (I) and (III),  this association is indeed a functor.
	
\end{proof}

We want to define partial actions for inverse categories in Exel's \cite{ruylivro} fashion, but the structure on an inverse category may have several idempotents; so the model for our definition comes from the partial category actions introduced by Nystedt \cite{nystedt-cat-partial-acts}.

\begin{definition}\label{def-partialact-invcat}
	Let $\mathcal{C}$ be an inverse category and $(P, \leqslant)$ be a poset. Then a map  $\theta:\mathcal{C}\rightarrow \ic{P}$ is an \textit{inverse category partial action}  if it defines a pair $ (\{\theta_s\}_{s\in\mathcal{C}}, \{ D_s \}_{s\in\mathcal{C}}) $ satisfying 
	
	\begin{enumerate}[(i)]
		\item for each $s\in\mathcal{C}$ the map $\theta_s:D_{s^\circ}\subseteq P\rightarrow D_{s}\subseteq P$ is an order preserving bijection;
		
		\item $P=\displaystyle\bigcup_{{X\in\mathcal{C}^{(0)} }} D_{1_X}$;
		
		\item if $e\in \rid{C}$ the map $\theta_e$ is the identity in his domain $D_e$;
		
		\item given $s,t\in\mathcal{C}$  such that $s\leqslant t$, then $D_{s^\circ}\subseteq D_{t^\circ}$ and ${\theta_t}_{|D_{s^\circ}}=\theta_s$;
		
		\item if $s\in\mathcal{C}$, then $D_s\subseteq D_{ir(s)}$;
		
		\item if there exists the composition $st$ in $\mathcal{C}$, then $\theta_{s}(D_{s^\circ}\cap D_t) = D_s\cap D_{st}$; and $\theta_s(\theta_t(x))=\theta_{st}(x)$, for $x\in \theta_{t^\circ}(D_t\cap D_{s^\circ})$.
		
	\end{enumerate}
	
	In addition: $\theta$ is a \textit{global action} if $D_s=D_{ir(s)}$ for all $s\in \mathcal{C}$.
\end{definition}

The next proposition exhibits a relation between the Definition \ref{def-partialact-invcat} and the  Definition \ref{defi-invcat-actions-via-symm}.

\begin{proposition}\label{propo-relation-defi-symm-partialaction}
	Let $\theta: \mathcal{C}\rightarrow \iicp$ be an action by symmetries, then $\theta$ induces a family of maps $(\{\theta_s\}_{s\in\mathcal{C}}, \{ D_s \}_{s\in\mathcal{C}})$ such as in Definition \ref{def-partialact-invcat} with $D_s=D_{ir(s)}$ for each $s\in\mathcal{C}$.
\end{proposition}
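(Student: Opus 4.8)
The plan is to set $\theta_s := \Theta(s)$ for every $s \in \mathcal{C}$ and to read the relevant subsets off the functor: put $D_s := \mathrm{ran}\,\Theta(s)$, which is an order ideal of $P$ (an ideal of an ideal of $P$ is again one). Since $\Theta$ is a functor between inverse categories, Proposition \ref{prop.functors.inverse.categories} gives $\Theta(s^\circ) = \Theta(s)^{-1}$, whence $\mathrm{dom}\,\Theta(s) = \mathrm{ran}\,\Theta(s^\circ) = D_{s^\circ}$; thus each $\theta_s$ has the shape $\theta_s\colon D_{s^\circ}\to D_s$ required by Definition \ref{def-partialact-invcat}, and the labelling is internally consistent. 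Item (i) is then immediate because, by the very definition of $\iicp$, its morphisms are order-preserving bijections between order ideals. For (ii), functoriality gives $\Theta(1_X) = \mathrm{id}_{\Theta(X)}$, so $D_{1_X} = \Theta(X)$ and $\bigcup_{X\in\mathcal{C}^{(0)}}D_{1_X} = \bigcup_X\Theta(X) = P$ is exactly the surjectivity hypothesis in Definition \ref{defi-invcat-actions-via-symm}.

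The computational fact I would record first is that in $\iicp$ the composites $g^{-1}g$ and $gg^{-1}$ are the partial identities on $\mathrm{dom}\,g$ and on $\mathrm{ran}\,g$ respectively (a one-line unwinding of the composition rule of $\iicp$; compare Remark \ref{remark.domain.Iic(P)}). Applying this to $g = \Theta(s)$, and using that an idempotent $e \in E(\mathcal{C})$ satisfies $e^\circ = e$ and $e = e^\circ e$, one gets $\Theta(e) = \Theta(e)^{-1}\Theta(e) = \mathrm{id}_{D_{e^\circ}} = \mathrm{id}_{D_e}$, which is (iii). Applying it to $g = \Theta(s)^{-1}$ gives $\Theta(ss^\circ) = \Theta(s)\Theta(s)^{-1} = \mathrm{id}_{D_s}$, hence $D_{ir(s)} = D_{ss^\circ} = D_s$; this settles (v) and, simultaneously, the concluding claim $D_s = D_{ir(s)}$. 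The same identity applied to $s^\circ s$ yields $D_{s^\circ s} = D_{s^\circ}$, which is used below.

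For (iv): if $s \leqslant t$ then $s = ts^\circ s$ by Proposition \ref{prop-inv-cat-ord}, so $\theta_s = \Theta(t)\circ\Theta(s^\circ s) = \Theta(t)\circ\mathrm{id}_{D_{s^\circ}}$; unwinding the composition rule of $\iicp$ identifies the right-hand side with the restriction of $\theta_t$ to $D_{s^\circ}\cap D_{t^\circ}$, and comparing domains forces $D_{s^\circ}\subseteq D_{t^\circ}$ and $\theta_t|_{D_{s^\circ}} = \theta_s$. For (vi): whenever $st$ is defined in $\mathcal{C}$, functoriality gives $\Theta(st) = \Theta(s)\circ\Theta(t)$; reading the domain and range of this composite off the composition rule of $\iicp$ gives $D_{(st)^\circ} = \mathrm{dom}\,\Theta(st) = \theta_{t^\circ}(D_t\cap D_{s^\circ})$ and $D_{st} = \mathrm{ran}\,\Theta(st) = \theta_s(D_{s^\circ}\cap D_t)$; since $\mathrm{ran}(\Theta(s)\circ\Theta(t))\subseteq\mathrm{ran}\,\Theta(s) = D_s$ we have $D_{st}\subseteq D_s$, so $\theta_s(D_{s^\circ}\cap D_t) = D_{st} = D_s\cap D_{st}$, and the cocycle identity $\theta_s(\theta_t(x)) = \theta_{st}(x)$ for $x\in\theta_{t^\circ}(D_t\cap D_{s^\circ})$ is just the equality of functions $\Theta(st) = \Theta(s)\circ\Theta(t)$ evaluated pointwise.

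The statement is a dictionary between two encodings of the same data, so I do not expect a conceptual obstacle. The one thing that needs constant care is that every composition in $\iicp$ silently restricts its two factors to the intersection of the relevant ideals, so one must keep track of which intersections appear and verify, each time, that they collapse to the ideal we have named $D_{s^\circ}$ or $D_s$; this is the only place where the argument could go wrong if done carelessly.
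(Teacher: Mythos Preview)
Your proposal is correct and follows essentially the same approach as the paper: both set $D_s := \mathrm{ran}\,\theta_s$, invoke Proposition~\ref{prop.functors.inverse.categories} to identify $\theta_{s^\circ}$ with $\theta_s^{-1}$, and then verify the axioms of Definition~\ref{def-partialact-invcat} one by one using functoriality and the composition rule of $\iicp$. Your argument is in fact slightly more streamlined than the paper's in places---for instance, you obtain $D_s = D_{ir(s)}$ directly from $\Theta(ss^\circ) = \Theta(s)\Theta(s)^{-1} = \mathrm{id}_{D_s}$, whereas the paper first shows $\mathrm{dom}(\theta_s) \subseteq \mathrm{dom}(\theta_{s^\circ s})$ and then the reverse inclusion---but these are differences of organization, not of substance.
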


\begin{proof}
	
	Given $(s:X\rightarrow Y)\in \mathcal{C}$, from the definition of functor and $\iicp$ we get $\theta_s: dom(\theta_s)\subseteq \theta(X) \rightarrow ran(\theta_s)\subseteq \theta(Y),$ where $\theta_s=\theta(s)$.

 Given that $\theta_{s^{\circ}} = (\theta_s)^{-1}$ (Proposition \ref{prop.functors.inverse.categories}) it follows that $dom(\theta_s)=ran(\theta_{s^\circ})$. In particular, if $e^2=e$ in $\mathcal{C}$, then $\theta_e=\theta_e^{-1}$ and it is the identity map of $dom(\theta_e)=ran(\theta_e)$.

	As $\mathcal{C}$ is an inverse category, $s=ss^\circ s$ for all $s\in\mathcal{C}$ and as $\theta$ is a functor, we obtain $\theta_s=\theta_{ss^{\circ}}\theta_s = \theta_s \theta_{s^{\circ}s}.$ Hence 	$\theta_s=\theta_s\theta_{s^{\circ}s}={\theta_s}\mid_{ran(\theta_{s^{\circ}s})\cap dom(\theta_s)}$, and it follows that $ran(\theta_{s^{\circ}s})\cap dom(\theta_s)=dom(\theta_s)$, \textit{i.e.} that $dom(\theta_s)\subseteq dom(\theta_{s^{\circ}s})=ran(\theta_{s^{\circ}s}).$

	Using that $dom(\theta_{s^{\circ}})=ran(\theta_s)$, we have that $
	\theta_{s^{\circ}s} = \theta_{s^{\circ}}\theta_s = {\theta_{s^\circ} \mid }_{dom(\theta_{s^{\circ}})}\circ {\theta_s \mid}_{dom(\theta_s)}$
	and since the domain of the last composition is the domain of  $\theta_s$, it follows that $dom(\theta_s)= dom(\theta_{s^{\circ }s}).$ 
 	
	Therefore we have the equalities $dom(\theta_s)= d(\theta_{s^{\circ}s}) = ran(\theta_{s^{\circ}s}),$ and $ran(\theta_s) = ran(\theta_{ss^{\circ}})=dom(\theta_{ss^{\circ}}).$
	
	Now if we define $D_s=ran(\theta_s)$, which implies that $D_{s^\circ}=dom(\theta_s)$, we have the following:
	\begin{itemize}
		\item by construction, item (i) is satisfied;
		\item as $D_{1_X}=ran(\theta_{1_X})=dom(\theta_{1_X})$, each $D_e$ is an ideal and $P=\bigcup_X dom(\theta_{1_X})$ (because $\theta$ is an action by symmetries), (ii) holds; 
		\item if $e^2=e$ the map $\theta_e$ is the identity map in its domain $D_e$, so (iii) holds;
		\item since $D_s=ran(\theta_s)= ran(\theta_{ss^\circ})$, we have $D_s=D_{ir(s)}$ for all $s$, (iv) holds;
		\item if $s,t$ are arrows such that $s\leqslant t$, then $s=ts^\circ s$; combining this equality with the fact that $D_{s^\circ}=dom(\theta_s)=D_{s^\circ s}$ we have (vi);
		\item suppose $x\in D_s\cap D_t$, then there exists $y\in D_{t^\circ}$ such that $x=\theta_t(y)$; hence $\theta_s(x)=\theta_{st}(y)\in D_s\cap D_{st}$ and therefore $\theta_s(D_{s^\circ}\cap D_t)\subseteq D_s\cap D_{st}.$ It follows then that $\theta_{s^{\circ}}(D_s\cap D_{st})\subseteq D_{s^\circ} \cap D_{s^\circ st}=D_{s^\circ}\cap D_t$ and applying $\theta_s$, we obtain $D_s\cap D_{st}\subseteq \theta_s(D_{s^\circ}\cap D_t).$
		
	\end{itemize}
	
	Therefore the proof is complete.
\end{proof}

Fixing the notations: given an inverse category $\mathcal{C}$  and a poset $P$ ,
\begin{itemize}
	\item the symbol $\overline{\theta}:(\mathcal{C},(\:\:)^\circ)\curvearrowright \iicp$, denotes a global action by symmetries, and
	\item the symbol $\theta:(\mathcal{C},(\:\:)^\circ)\curvearrowright_p \iicp$ denotes a partial action by symmetries.
\end{itemize}

About the terminology:  we deal only with actions on posets, so we will say action by symmetries and suppress the ``ordered''.

\begin{remark}
\label{remark-restriction-global-action}
As Nystedt \cite{nystedt-cat-partial-acts}, Gilbert \cite{gilbert-pdgexp} and Bagio et al  \cite{bagio2010partial} remark, a global action of a groupoid can be restricted to a partial action, and essentially the same may be done for inverse categories: we can restrict a global action of an inverse category to a partial action in the sense of Definition \ref{def-partialact-invcat}.
Indeed, let $\overline{\theta}:(\mathcal{C},(\:\:)^\circ)\curvearrowright \iicp$ a global action by symmetries on $P$, where for each $s$ we have a order preserving bijection $\overline{\theta}_s: \overline{D}_{s^\circ} \to \overline{D}_{s}$. If $Q\subseteq P$ is an ideal then there is a partial action $\theta: (\mathcal{C},(\:\:)^\circ)\curvearrowright_p \mathcal{I}_{ic}(Q)$ induced by the previous one in the following fashion: for each $s\in\mathcal{C}$ define $D_s = (Q \cap \overline{D}_s) \cap \overline{\theta}_s (Q \cap \overline{D}_{s^\circ})$, which is an ideal of $Q$, and take the order isomorphism $\theta_s = \overline{\theta}_s \mid_{D_{s^\circ}}: D_{s^\circ} \to D_s$. The assignment $s \mapsto \theta_s$ defines a partial action $\theta:  \mathcal{C} \to  \mathcal{I}_{ic}(Q)$. 
\end{remark}


\subsection{Bernoulli actions}

We will introduce Bernoulli actions of inverse categories. These actions will escort us for the rest of this article, and are our major application of the theory that we developed of inverse category (partial) actions. 

Let $(\mathcal{C},(\:\:)^{\circ})$ be an inverse category. Given $X\in\mathcal{C}^{(0)}$ and $e\in \ridx{C}{X}$ (see Definition \ref{defi-Rid-inner-outer-source-target}), we define the set $$P_{e,X}(\mathcal{C}):= \{A\subset \mathcal{C}; \mid A\mid < \infty,\:   or(a)=X, \: ir(a)=e \: , \forall a\in A  \}.$$

Then we define the following sets
$$P(\mathcal{C}):=\bigcup_{\substack{X\in\mathcal{C}^{(0)} \\ e \in \ridx{C}{X}  }}P_{e,X} \text{  and } P_\circ(\mathcal{C}):=\{A\in P(\mathcal{C}); A\cap \rid{C} \neq \emptyset  \}.$$

An equivalent characterization is possible if we use costars and right relations. For instance, if $X\in\mathcal{C}^{(0)}$ and $e \in \ridx{C}{X}$, then $A \in P(\mathcal{C})$ if and only if  $ \mid A\mid < \infty,\: A\subset Costar(X), \: A\subset \mathscr{R}_e$, and $A\in P_\circ(\mathcal{C})$ if and only if $A \in P(\mathcal{C})$ and $A\ni e$.

$P(\mathcal{C})$ and $P_\circ(\mathcal{C})$ carry a natural partial order, which is inspired by the partial order defined in \cite{lawsonmarstein-expansions}).

\begin{definition}\label{defi-partial-order-Psets-invcat}
	Let $\mathcal{C}$ be an inverse category and consider $A,B\in P(\mathcal{C})$ such that $A \in \pex{e}{X}$ and $B \in \pex{f}{Y}$, for $X,Y\in\mathcal{C}^{(0)}, e \in \ridx{C}{X}$ and $ f \in \ridx{C}{Y}$; we define an order on $P(\mathcal{C})$ by  $$A\leqslant B \iff X=Y, \: e\leqslant f, \: eB\subseteq A,$$ where the order on  $\ridx{C}{X}$  is the natural order of inverse categories which was stated in Proposition \ref{prop-inv-cat-ord}. Notation: $(P(\mathcal{C}), \leqslant)$.
\end{definition}

Clearly, this order restricts to an order in $P_\circ(\mathcal{C})$ and, moreover, $P(\mathcal{C})$ is an order ideal of $P_\circ(\mathcal{C})$.

\begin{definition}
	
	The (\textit{global}) \textit{Bernoulli fibred action} of an inverse category $\mathcal{C}$ on $P(\mathcal{C})$ is the pair $(\varepsilon, \mathfrak{B})$, where
	
	\begin{itemize}
		\item $\varepsilon=(o\varepsilon,i\varepsilon)$ is the moment map $\varepsilon: P(\mathcal{C})\rightarrow \mathcal{C}^{(0)}\times \rid{C}$, which is defined as follows : if $A \in \pex{e}{X}$ then $\varepsilon(A)=(o\varepsilon(A),i\varepsilon(A))=(X, e)$;
		\item the action map $\mathfrak{B}:\mathcal{C} {_d\times _\varepsilon}P(\mathcal{C})\rightarrow P(\mathcal{C}) $ is given by  $(s,A)\mapsto \mathfrak{B}(s,A)=\mathfrak{B}_s(A):= sA$.
	\end{itemize}
\end{definition}

Let us check that this is indeed a fibred action. 

Let $A\subset Costar(X)$, $A\subset \mathscr{R}_e$, $e \in \ridx{C}{X}$.
The action map is well defined: if there exists $\mathfrak{B}_s(A)=sA$, then $sA$ is a finite set such that $sA\subset Costar(or(s))$ and $sA\subset \mathscr{R}_{ses^{\circ}}$, that is, $sA \in P_{or(s), ses^{\circ}} (\mathcal{C}) \subset P (\mathcal{C})$.

By the definition of the order in $P(\mathcal{C})$ and of the moment map, the inner moment map preserves order.

The action map also preserves order. 
Let $A \in P_{e,X} (\mathcal{C})$ and $B \in P_{f,Y} (\mathcal{C})$ such that $A \leq B$; let $s,t\in \mathcal{C}$, $s:X\rightarrow U$ and $t: Y\rightarrow V$, such that $s\leqslant t$ and that $\mathfrak{B}_s(A)$ and $\mathfrak{B}_t(B)$ are defined; we will show that $\mathfrak{B}_s(A)\leqslant\mathfrak{B}_t(B)$. In fact, 
 \begin{itemize}
		\item since $o\varepsilon(sA)= or(s)=U$, $o\varepsilon(tB)=or(t)=V$, and from $s\leqslant t $ we have that the composition $s^\circ t $ exists, so $or(t)=od(s^\circ)$ which implies that $U=V$;
		\item as $i\varepsilon(sA)=ses^\circ$ and $i\varepsilon(tb)=tft^\circ$, and from $s\leqslant t $ and $e\leqslant f$ we have $ses^\circ\leqslant sfs^\circ \leqslant tft^\circ$;
		\item as $eB\subseteq A$,  $e\leqslant id(s)$ and $f\leqslant id(t)$, $e\leqslant f$ and $s\leqslant t$   we can show that $sef=ses^\circ sf=ses^\circ st^\circ tf=sefs^\circ st^\circ t=ses^\circ t$; hence $ses^\circ(tB)=seB\subseteq sA$.
	\end{itemize}
	The conclusion is that $sA\leqslant tB$, \textit{i.e.} $\mathfrak{B}_s(A)\leqslant \mathfrak{B}_t(B)$.

Axioms (I)-(III) from Definition \ref{defi-invcat-action} can be verified in a analogous manner. 

Now that we constructed the global Bernoulli action, we may restrict it to the poset $P_\circ(\mathcal{C})$ and thus acquire a partial action. To achieve this goal, we turn to  the global action by symmetries associated to $(\varepsilon, \mathfrak{B})$.

The \textit{Bernoulli action by symmetries} is the functor $\mathfrak{B} : \mathcal{C} \to \ic{P(\mathcal{C})}$ that takes $s\in\mathcal{C}$ to the map $$\mathfrak{B}_s: \overline{ {D }}_{s^\circ}\rightarrow \overline{ {D }}_s \text{  with  }  A\mapsto \mathfrak{B}_s(A)=sA,$$ 
where 
\[
\overline{ {D} }_{s} := \{B \in P(\mathcal{C}); o\varepsilon(B)=or(s),\: i\varepsilon(B)\leqslant ir(s) \}.\]
Using the definitions of inverse category and of the moment map $\varepsilon$, the domain $\overline{ {D }}_{s^\circ}$ may be described also in terms of $s$ as $$\overline{ {D} }_{s^\circ} := \{A\in P(\mathcal{C}); o\varepsilon(A)=od(s),\: i\varepsilon(A)\leqslant id(s) \}.$$

Applying the method of Remark \ref{remark-restriction-global-action}, \textit{i.e.} the restriction of a global action to a partial action, we can define the \textit{Bernoulli partial action by symmetries} $\mathfrak{b}_s:=(\mathfrak{B}_s)_{|{D_s}}\mathfrak{b}_s:D_{s^\circ}\rightarrow D_s$, where
\begin{align*}
D_s& =\{B \in P(\mathcal{C}); o\varepsilon(B)=or(s), \: i\varepsilon(B)\leqslant ir(s), \: B\ni i\varepsilon(B)s ,\: i\varepsilon(B) \}, \\ 
D_{s^\circ} & =\{A\in P(\mathcal{C}); o\varepsilon(A)=od(s),\: i\varepsilon (A)\leqslant id(s), \: A\ni i\varepsilon(A)s^\circ, i\varepsilon(A)\}. 
\end{align*}

We called $\mathfrak{B}$ a global action and $\mathfrak{b}$ a partial action, and in the next proposition we that these actions do define a global action and a partial action, respectively.

\begin{lemma}\label{lemma-global-partial-Bernoulli-proof}
	Given an inverse category $(\mathcal{C}, (\:\:)^\circ )$ and the posets $P(\mathcal{C})$ and $P_\circ(\mathcal{C})$, the pairs  $(\{{\mathfrak{B}}_s\}_{s\in\mathcal{C}}, \{{\overline{ {D }}_s}\}_{s\in\mathcal{C}})$  and $(\{{\mathfrak{b}}\}_{s\in\mathcal{C}}, \{{{ {D }}_s}\}_{s\in\mathcal{C}})$ are, respectively, a global and a partial inverse category actions. In addition, $\mathcal{C}\cdot P_\circ(\mathcal{C})= P(\mathcal{C})$.
\end{lemma}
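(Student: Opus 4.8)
The plan is to verify the axioms from Definition~\ref{defi-invcat-action} for the pair $(\varepsilon,\mathfrak{B})$ acting on $P(\mathcal{C})$ (establishing the global statement), then to invoke Lemma~\ref{lemma-invcat-action-symmetries} to pass to the associated action by symmetries $\mathfrak{B}:\mathcal{C}\to\ic{P(\mathcal{C})}$, and finally to apply the restriction procedure of Remark~\ref{remark-restriction-global-action} with the ideal $P_\circ(\mathcal{C})\triangleleft P(\mathcal{C})$ to obtain that $\mathfrak{b}$ satisfies the axioms of Definition~\ref{def-partialact-invcat}. Most of the fibred-action verification is already carried out in the text preceding the lemma (well-definedness of $\mathfrak{B}_s(A)=sA$, order-preservation of $i\varepsilon$ and of $\mathfrak{B}_s$); what remains is to spell out axioms (I)--(III). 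For (I), $\mathfrak{B}_{i\varepsilon(A)}(A) = eA = A$ since $ir(a)=e$ forces $ea=a$ for every $a\in A$. For (II), if $A\in P_{e,X}(\mathcal{C})$ and the composite $sA$ is defined (so $X=od(s)$, $e\leqslant id(s)$), then $sA\in P_{or(s),\,ses^\circ}(\mathcal{C})$, so $o\varepsilon(sA)=or(s)$ and $i\varepsilon(sA)=ses^\circ\leqslant ss^\circ=ir(s)$, with equality when $e=id(s)$ because then $ses^\circ = s(s^\circ s)s^\circ = ss^\circ$. For (III), associativity of composition in $\mathcal{C}$ gives $\mathfrak{B}_s(\mathfrak{B}_t(A)) = s(tA) = (st)A = \mathfrak{B}_{st}(A)$ whenever the relevant composites exist. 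This establishes that $(\varepsilon,\mathfrak{B})$ is a fibred action, hence by Lemma~\ref{lemma-invcat-action-symmetries} that $\mathfrak{B}:\mathcal{C}\to\ic{P(\mathcal{C})}$ is an action by symmetries, and by the concluding remarks before the lemma it is a global action (i.e. $\overline{D}_s = \overline{D}_{ir(s)}$) since the domains $\overline{D}_{s}$ depend only on $or(s)$ and $ir(s)$.

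For the partial action $\mathfrak{b}$, I would first note that $P_\circ(\mathcal{C})$ is an ideal of $P(\mathcal{C})$ (this is already observed in the text right after Definition~\ref{defi-partial-order-Psets-invcat}), so Remark~\ref{remark-restriction-global-action} applies directly with $Q = P_\circ(\mathcal{C})$ and yields a partial action $\theta:\mathcal{C}\to\mathcal{I}_{ic}(P_\circ(\mathcal{C}))$ with $D_s = (P_\circ(\mathcal{C})\cap\overline{D}_s)\cap \mathfrak{B}_s(P_\circ(\mathcal{C})\cap\overline{D}_{s^\circ})$. The remaining point is to check that this abstractly-defined $D_s$ coincides with the explicit description given in the text, namely $D_s=\{B\in P(\mathcal{C}); o\varepsilon(B)=or(s),\ i\varepsilon(B)\leqslant ir(s),\ B\ni i\varepsilon(B)s,\ B\ni i\varepsilon(B)\}$. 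The containment $D_s\subseteq \overline{D}_s$ is clear; membership in $P_\circ$ is exactly the condition $B\ni i\varepsilon(B)$ (an idempotent belonging to $B$); and $B\in\mathfrak{B}_s(P_\circ\cap\overline{D}_{s^\circ})$ means $B = sA$ for some $A\in P_\circ$ with $i\varepsilon(A)=id(s)\,i\varepsilon(B)$-compatible, which after a short computation with inverse-category identities (using $i\varepsilon(B)=ses^\circ$ and $A\ni i\varepsilon(A)$) translates into $B\ni ses^\circ s = i\varepsilon(B)s$. So one argues both inclusions by writing the preimage of the distinguished idempotent of $A$ under $\mathfrak{B}_s$ and back. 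Then Proposition~\ref{propo-relation-defi-symm-partialaction}, applied to the action by symmetries $\mathfrak{b}$ on $P_\circ(\mathcal{C})$, certifies directly that $\mathfrak{b}$ satisfies all of (i)--(vi) of Definition~\ref{def-partialact-invcat}; and the fact that $\mathfrak{B}$ is a \emph{global} action in the sense of Definition~\ref{def-partialact-invcat} ($\overline{D}_s=\overline{D}_{ir(s)}$) was noted above.

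Finally, for the equality $\mathcal{C}\cdot P_\circ(\mathcal{C}) = P(\mathcal{C})$: the inclusion $\mathcal{C}\cdot P_\circ(\mathcal{C})\subseteq P(\mathcal{C})$ is immediate since $P_\circ(\mathcal{C})\subseteq P(\mathcal{C})$ and the latter is invariant under the global action. For the reverse inclusion, take any $A\in P_{e,X}(\mathcal{C})$ with $A$ nonempty, pick $a\in A$; then $a^\circ A$ is defined (here $s=a^\circ: X\to od(a)$ satisfies $e\leqslant id(a^\circ)=aa^\circ$... one checks $a^\circ a$ gives the inner domain correctly), and $a^\circ A\ni a^\circ a = id(a)$, which is an idempotent, so $a^\circ A\in P_\circ(\mathcal{C})$ and $A = a(a^\circ A)\in\mathcal{C}\cdot P_\circ(\mathcal{C})$; for the empty set one notes $\emptyset\in P_{e,X}(\mathcal{C})$ is already its own image under $\mathfrak{B}_e$ and lies in $P_\circ$ only in degenerate situations, so one handles $\emptyset$ separately or excludes it if the conventions of the paper do. The main obstacle I anticipate is the bookkeeping in matching the abstract restricted domain $D_s$ from Remark~\ref{remark-restriction-global-action} with the explicit set written in the text — in particular verifying that the two ``distinguished element'' conditions $B\ni i\varepsilon(B)$ and $B\ni i\varepsilon(B)s$ together are equivalent to $B$ lying in the image $\mathfrak{B}_s(P_\circ\cap\overline{D}_{s^\circ})$ — which requires carefully tracking which idempotent of the preimage set maps to which element of $B$ under left multiplication by $s$, using $ses^\circ s = i\varepsilon(sA)\,s$ and the compatibility $e\leqslant id(s)$.
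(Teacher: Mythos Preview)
Your approach is correct but takes a more structural route than the paper. The paper verifies the six axioms of Definition~\ref{def-partialact-invcat} for $\mathfrak{B}$ directly and by hand (invoking Lemma~\ref{lemma-invcat-action-symmetries} only for axiom (i)), then dismisses the partial case as ``analogous'' and never addresses $\mathcal{C}\cdot P_\circ(\mathcal{C})=P(\mathcal{C})$ at all. You instead chain the prior results: the fibred-action verification already done in the text, then Lemma~\ref{lemma-invcat-action-symmetries} to obtain an action by symmetries, then Proposition~\ref{propo-relation-defi-symm-partialaction} to conclude globality, and finally Remark~\ref{remark-restriction-global-action} for the partial action. This is cleaner and makes the earlier lemmas earn their keep; it also forces you to check that the explicit $D_s$ written in the text agrees with the abstract restricted domain, which the paper leaves implicit. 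Your argument for $\mathcal{C}\cdot P_\circ(\mathcal{C})=P(\mathcal{C})$ via $A=a(a^\circ A)$ with $a\in A$ is correct and fills a genuine gap in the paper's proof.

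One small slip: in your partial-action paragraph you invoke Proposition~\ref{propo-relation-defi-symm-partialaction} ``applied to the action by symmetries $\mathfrak{b}$'', but $\mathfrak{b}$ is not a functor $\mathcal{C}\to\ic{P_\circ(\mathcal{C})}$ --- it is only a partial action. This sentence is redundant rather than wrong, since Remark~\ref{remark-restriction-global-action} already delivers the partial-action axioms; just drop it. The empty-set edge case you flag is real (if $\emptyset\in P(\mathcal{C})$ then the moment map is ill-defined on it, and $\emptyset\notin\mathcal{C}\cdot P_\circ(\mathcal{C})$), but the paper is silent on this too, so it is a convention issue rather than a flaw in your argument.
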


\begin{proof}
	We will  check the axioms of Definition \ref{def-partialact-invcat} for  $\mathfrak{B}$. The other case is analogous.
	
	\begin{enumerate}[(i)]
		\item From Lemma \ref{lemma-invcat-action-symmetries}, each map $\mathfrak{B}_s : \overline{D}_{s^\circ} \to \overline{D}_s$ is an order preserving bijection.
		
		\item By construction $P(\mathcal{C})=\displaystyle\bigcup_{X\in\mathcal{C}^{(0)}}\overline{ {D }}_{1_X}$;
		\item If $A\in\overline{ {D }}_e$, by definition $i\varepsilon(A)\leqslant e$, so $eA=A$.
		\item Suppose $s\leqslant t $, by inverse category properties we have $s=ts^\circ s$ and $id(s)\leqslant ir(t)$. The existence of $ts^\circ $ implies that $od(t)=or(s^\circ)=od(s)$ and it follows that  $\overline{ {D }}_{s^\circ}\subset \overline{ {D }}_{t^\circ}$. Moreover, $i\varepsilon(A)\leqslant id(s)$ and $s=ts^\circ s$, so $i\varepsilon(A)\leqslant s^\circ st^\circ$, and for $A\in \overline{ {D }}_{s^\circ}$, thus $tA=ti\varepsilon(A)A=t(s^\circ s t^\circ t i\varepsilon(A))A=ts^\circ sA =sA.$
		\item  Since $or(ss^\circ)=or(s)$ and $ir(ss^\circ)=ir(s)$, we can see that $\overline{ {D }}_s = \overline{ {D }}_{ss^\circ}$.
		
		\item Assume that $st$ exists in $\mathcal{C}$ and take $A\in\overline{ {D }}_{s^\circ}\cap \overline{ {D }}_t$. Last assumption asserts that $o\varepsilon(A)=od(s) \text{  and  } o\varepsilon(A)=or(t),$ and $i\varepsilon(A)\leqslant is(A) \text{  and  } i\varepsilon(A)\leqslant ir(t).$  If we calculate $sA$, observe that $o\varepsilon(sA)= or(s)=or(st),$ $i\varepsilon(sA)=si\varepsilon(A)s^\circ \leqslant s(id(s))s^\circ=ir(s)$ and
		$i\varepsilon(sA)\leqslant s(ir(t))s^\circ = ir(st).$ Therefore $sA\in \overline{ {D }}_s\cap \overline{ {D }}_{st}$. The last claim follows from the fact that $\mathcal{B}$ is the action map of a fibred action.	\end{enumerate} 
	
	By proving the items above we have just shown that $\mathfrak{B}$ defines a global inverse category action by symmetries. \end{proof}

Notation: $\mathfrak{B}:(\mathcal{C}, (\:\:)^\circ )\curvearrowright  \mathcal{I}_{ic}(P(\mathcal{C}))$ is a global action, and $\mathfrak{b}:(\mathcal{C}, (\:\:)^\circ )\curvearrowright_p \mathcal{I}_{ic}(P_\circ(\mathcal{C}))$ is a partial action.

\

There are two more actions we would like to define. These new actions will arise if we change the inequality to an equality in both global and partial Bernoulli actions domains. We were inspired by O'Carroll's strict inverse semigroups as in \cite{ocarroll-strongI}, and its relation to partial actions proposed by Khrypchenko \cite{mykola-partial}.

To avoid cluttering the text with too much information we introduce a new notation. Indeed, let $\mathcal{C}$ be an inverse category and let $P$ be a poset, we define the set $$\mathcal{C} {_d\overline{\times} _\rho}P:=\{(s,x)\in \mathcal{C}\times P; o\rho(x)=od(s), \: i\rho(x)=id(s)\}.$$

Next we define the \textit{strict Bernoulli actions} in their fibred versions. Let $\mathcal{C}$ be an inverse category and consider the sets $P(\mathcal{C})$ and $P_\circ(\mathcal{C})$.

\begin{definition}\label{def-strict-ic-actoin}
	The \textit{strict global fibred Bernoulli action} of $\mathcal{C}$ on $P(\mathcal{C})$ is defined by the maps
	\begin{itemize}
		\item  $\varepsilon: P(\mathcal{C})\rightarrow \mathcal{C}^{(0)}\times $, with  $P(\mathcal{C})\mapsto \varepsilon(A)=(o\varepsilon(A),i\varepsilon(A))=(X, e)$;
		\item $\mathfrak{sB}:\mathcal{C} {_d\overline{\times} _\varepsilon}P(\mathcal{C})\rightarrow P(\mathcal{C}) $ with $(s,A)\mapsto \mathfrak{sB}(s,A)=\mathfrak{sB}_s(A):= sA$.
	\end{itemize}
\end{definition}

As we did with the global fibred action $\mathfrak{B}$, we can define the strict global action by symmetries and then restrict  it (following the lines of Remark \ref{remark-restriction-global-action}) to a partial strict action. 

Given $s\in\mathcal{C}$, the \textit{strict global Bernoulli action by symmetries} is the map ${\mathfrak{sB}_s:\overline{D}^m_{s^\circ}\rightarrow \overline{D}^m_s}$ where  
$$\overline{D}^m_s:=\{A\in P(\mathcal{C}); o\varepsilon(A)=or(s),\: i\varepsilon(A)= ir(s)\}.$$

Restricting the moment map $\varepsilon$ and the action map $\mathfrak{sB}$ to the set $P_\circ(\mathcal{C})$, we can define for each $s\in \mathcal{C}$ the \textit{strict partial fibred Bernoulli action} $(\epsilon, \mathfrak{sb})$ of $\mathcal{C}$ on $P_\circ({\mathcal{C}})$. Moreover, we have the \textit{strict partial Bernoulli action by symmetries} $\mathfrak{sb}_s: \overline{D}^m_{s^\circ}\rightarrow \overline{D}^m_s$, where $$\overline{D}^m_s=:\{A\in P(\mathcal{C}); o\varepsilon(A)=or(s),\: i\varepsilon(A)= ir(s), \: A\ni ss^\circ,s \}.$$

\section{Semidirect product}\label{section-sd-product}

We will associate an inverse category to each inverse category fibred
action via an adaptation of the categorical semidirect product from Tilson-Steinberg \cite{steinberg-categories-as-alg-II}.  Next, we specialize it to our study of Bernoulli’s actions.

\subsection{The semidirect product of a fibred action}

Let $(\rho,\theta): (\mathcal{C}, (\: \:)^{\circ})\curvearrowright (P,\leqslant)$ be a fibred action of an inverse category on a poset. We will associate an inverse category to this datum, the semidirect product of $P$ by $\mathcal{C}$, which we will define in terms of the set of its arrows. 

\begin{definition}\label{defi-invcat-semidirect-prodc}
	The \textit{semidirect product} determined by $(\rho,\theta)$ is the  category $P\rtimes_{(\rho,\theta)}\mathcal{C}$ with the following structure: 
	\begin{itemize}
		\item  \underline{arrows}: $P\rtimes_{(\rho,\theta)}\mathcal{C}:= \{(x,s)\in P\times \mathcal{C}; o\rho(x)=or(s), \: i\rho(x)\leqslant ir(s) \};$
		\item  \underline{objects}: $( P\rtimes_{(\rho,\theta)}\mathcal{C})^{(0)}:=\{ (x,1)\in P\times \rid{C}; o\rho(x)=or(1_{o\rho(x)}), \: i\rho(x)\leqslant 1_{o\rho(x)} \}$;
		\item  \underline{composition}: $(x,s)(y,t)=(x,st)$ if, and only if, $x=\theta_s(y)$ and $st$ is defined in $\mathcal{C}$.
	\end{itemize} 
\end{definition}
Note that
\[
(x,1_{o\rho(x)})(x,s) = (x,s) = (x,1_{o\rho(x)})(\theta_{s^{\circ}} (x),1_{o \rho(\theta_{s^{\circ}} (x))}), 
\]
that is, 
 $(x,s)$ is an arrow from $(\theta_{s^{\circ}} (x),1_{o \rho(\theta_{s^{\circ}} (x))})$ to $(x,1_{o\rho(x)})$. A routine verification shows that $P\rtimes_{(\rho,\theta)}\mathcal{C}$ is indeed a category and, by its definition, its objects are in a bijective correspondence with the set $P$ via  $(x,1_{o \rho(x)}) \in P\rtimes_{(\rho,\theta)}\mathcal{C} \mapsto x \in P$.

It is important to describe the set of idempotents of this category:
$$\rid{ P\rtimes_{(\rho,\theta)}\mathcal{C}}:=\{ (x,e)\in P\times \rid{C}; o\rho(x)=or(e), \: i\rho(x)\leqslant e \}.$$

When there is no possibility of misunderstanding, we will write just $P\rtimes \mathcal{C}$.

\begin{lemma}\label{lemma-invcat-sdp-is-invcat}
Given a fibred action $(\rho,\theta): (\mathcal{C}, (\: \:)^{\circ})\curvearrowright (P,\leqslant)$, the category $P\rtimes \mathcal{C}$  is an inverse category with $(x,s)^{\circ}:=(\theta_{s^{\circ}}(x),s^\circ)$.
\end{lemma}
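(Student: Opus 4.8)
The plan is to verify directly the defining condition of Definition \ref{defi-inv-cat} for the proposed formula $(x,s)^{\circ} := (\theta_{s^{\circ}}(x), s^{\circ})$, taking for granted the (routine) fact, already noted after Definition \ref{defi-invcat-semidirect-prodc}, that $P \rtimes \mathcal{C}$ is a category; one could instead check the conditions of Proposition \ref{prop-defi-equiv-invcat}, but the bookkeeping would be essentially the same.

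First I would show that $(x,s)^{\circ}$ is a well-formed arrow of $P \rtimes \mathcal{C}$. Since $(x,s)$ is an arrow, $o\rho(x) = or(s) = od(s^{\circ})$ and $i\rho(x) \leqslant ir(s) = id(s^{\circ})$, so $(s^{\circ}, x)$ belongs to $\mathcal{C}\,{}_{d}\times_{\rho} P$ and $\theta_{s^{\circ}}(x)$ is defined. Applying Definition \ref{defi-invcat-action}(II) to $s^{\circ}$ gives $o\rho(\theta_{s^{\circ}}(x)) = or(s^{\circ}) = od(s)$ and $i\rho(\theta_{s^{\circ}}(x)) \leqslant ir(s^{\circ}) = id(s)$, which is exactly the requirement for $(\theta_{s^{\circ}}(x), s^{\circ})$ to be an arrow of $P \rtimes \mathcal{C}$; one checks moreover that it is an arrow from $(x, 1_{o\rho(x)})$ to $(\theta_{s^{\circ}}(x), 1_{o\rho(\theta_{s^{\circ}}(x))})$, reversing the direction of $(x,s)$ as it must.

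Next come the identities $(x,s)(x,s)^{\circ}(x,s) = (x,s)$ and $(x,s)^{\circ}(x,s)(x,s)^{\circ} = (x,s)^{\circ}$. The two facts that make them go through are: (a) since $i\rho(x) \leqslant ir(s) = ss^{\circ} = id(ss^{\circ})$, Proposition \ref{proposition.theta_s.bijection}(i) gives $\theta_{ss^{\circ}}(x) = x$, and likewise $i\rho(\theta_{s^{\circ}}(x)) \leqslant id(s) = s^{\circ}s$ gives $\theta_{s^{\circ}s}(\theta_{s^{\circ}}(x)) = \theta_{s^{\circ}}(x)$; and (b) in $\mathcal{C}$ the composites $ss^{\circ}$, $s^{\circ}s$, $ss^{\circ}s$, $s^{\circ}ss^{\circ}$ are all defined and satisfy $ss^{\circ}s = s$, $s^{\circ}ss^{\circ} = s^{\circ}$. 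Using the composition rule of $P \rtimes \mathcal{C}$ together with Definition \ref{defi-invcat-action}(III) and (a), one gets $(x,s)(x,s)^{\circ} = (x, ss^{\circ})$ and $(x,s)^{\circ}(x,s) = (\theta_{s^{\circ}}(x), s^{\circ}s)$; composing once more and invoking (a) and (b) yields the two identities.

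Finally, uniqueness. If $(y,t)$ is an arrow satisfying the same two equations with $(x,s)$ in place of $(x,s)^{\circ}$, then the mere existence of the composites in $(x,s)(y,t)(x,s)$ and $(y,t)(x,s)(y,t)$ forces $x = \theta_{s}(y)$ and $y = \theta_{t}(x)$, while comparing the $\mathcal{C}$-coordinates of these equations gives $sts = s$ and $tst = t$ in $\mathcal{C}$; uniqueness of inverses in $\mathcal{C}$ (Definition \ref{defi-inv-cat}) then forces $t = s^{\circ}$, whence $y = \theta_{s^{\circ}}(x)$ and $(y,t) = (x,s)^{\circ}$. The only genuinely delicate part is the bookkeeping in step (b): for each composite appearing above one must confirm both the composability condition in $\mathcal{C}$ and the relevant $\theta$-identity from Proposition \ref{proposition.theta_s.bijection}(i) and Definition \ref{defi-invcat-action}(III); everything else is immediate from the definitions.
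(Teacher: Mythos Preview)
Your proof is correct, but it takes a different route from the paper's. The paper verifies the conditions of Proposition~\ref{prop-defi-equiv-invcat}(ii): it checks (briefly) that $(x,s)\mapsto(\theta_{s^\circ}(x),s^\circ)$ is a contravariant functor fixing objects and satisfies the first two identities, and then spends its effort on the third condition, namely that idempotents $(x,ss^\circ)$ and $(y,tt^\circ)$ commute whenever their composite is defined. You instead work straight from Definition~\ref{defi-inv-cat}: after checking well-definedness and the two regularity identities, you establish uniqueness of the inverse by projecting onto the $\mathcal{C}$-coordinate and invoking uniqueness there. Your approach is arguably more elementary, since it never appeals to the equivalence in Proposition~\ref{prop-defi-equiv-invcat}, and the uniqueness step is particularly clean; the paper's approach trades the uniqueness argument for a commutation-of-idempotents computation, which is of comparable length. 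One small remark: your aside that ``the bookkeeping would be essentially the same'' under the alternative route is not quite accurate---the nontrivial content shifts from uniqueness to idempotent commutation, which is a genuinely different verification, even if neither is hard.
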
 

\begin{proof}
	We will verify the axioms of (ii) in Proposition \ref{prop-defi-equiv-invcat}. The verification that $(x,s) \mapsto (\theta_{s^{\circ}} (x), s^{\circ})$ defines a contravariant functor, that it fixes objects and that first two equalities are direct computations, so we will dedicate attention only to prove that idempotent arrows commute.  Indeed: let $(x,s)$ and $(y,t)$ be elements of $P\rtimes \mathcal{C}$.
By construction, the composition $[(x,s)(x,s)^{\circ}][(y,t)(y,t)^{\circ}]=(x,ss^\circ)(y,tt^\circ)$ is defined if and only if   $x=\theta_{ss^{\circ}}(y)$, and it follows that $[(y,t)(y,t)^{\circ}][(x,s)(x,s)^{\circ}]$ is defined if and only if $ y=\theta_{tt^{\circ}}(x).$ 		
Now $\mathcal{C}$ is an inverse category, hence  $ss^\circ tt^\circ  =tt^\circ ss^\circ$; from $  x=\theta_{ss^{\circ}}(x)$  and $y=\theta_{tt^{\circ}}(y)$ we deduce $\theta_{tt^{\circ}}(x)=\theta_{tt^{\circ}}(\theta_{ss^{\circ}}(x))= \theta_{ss^{\circ}}(\theta_{tt^{\circ}}(x)) =\theta_{ss^{\circ}}(y). $
		Therefore  
  $$
  \begin{array}{rcl}
  (x,ss^\circ)(y,t^\circ)& =& (x, ss^\circ tt^\circ) = (\theta_{ss^{\circ}}(y), tt^\circ ss^\circ)=\\
  &=&(\theta_{tt^{\circ}}(x),tt^\circ ss^\circ   )= (y, tt^\circ ss^\circ) = (y,tt^\circ)(x,ss^\circ).
  \end{array}$$ 
  	
Hence $P\rtimes \mathcal{C}$ is also an inverse category. \end{proof}
We remark that, following  Definition \ref{defi-Rid-inner-outer-source-target}, the inner source of $(x,s)\in P\rtimes \mathcal{C}$ is the idempotent morphism $id(x,s)=(\theta_{s^{\circ}}(x), s^\circ s)$, and
the inner target of $(x,s)$ is the idempotent morphism $ir(x,s)=(x,ss^\circ)$.


\subsection{Szendrei expansions}

The machinery from the last paragraphs is put in motion in this subsection, where we will define the semidirect product determined by the Bernoulli actions, which we call the Szendrei expansions of an inverse category.

\begin{definition}\label{defi-exp-invcat}
	Let $(\varepsilon,\mathfrak{B}):(\mathcal{C}, (\:\:)^\circ)\curvearrowright (P(\mathcal{C}),\leqslant)$ be the Bernoulli fibred action. The \textit{Szendrei expansion} of the inverse category $\mathcal{C}$ is the inverse category $\overline{Sz}(\mathcal{C}):=P(\mathcal{C})\rtimes_{(\varepsilon,\mathfrak{B})}\mathcal{C}.$
\end{definition}

In \cite{birget1984almost}, Birget and Rhodes introduce an \textit{expansion} of a semigroup as a functor $F$ from a category of semigroups to itself (or to a particular subcategory) equipped with a natural transformation $\eta$ from $F$ to the identity functor such that  $\eta_S : F(S) \to S$ is surjective for every semigroup $S$. 
Let us introduce the obvious general concept of an expansion.
\begin{definition}
    An expansion of a category $\calC$ is a functor $F : \calC \to \calC$ such that there exists a natural transformation $\eta: F \to Id_{\calC}$ such that  $\eta_x : F(x) \to x$ is surjective for every object $x \in \calC^{(0)}$.
\end{definition}
 Szendrei's expansion $\overline{Sz}(\calC)$ is  an expansion in this sense.
Let $\invcat$ denote the category whose objects are  inverse categories and whose morphisms are functors. 
\begin{proposition}
    Szendrei's expansion of inverse categories defines an expansion $\overline{Sz}: \invcat \to \invcat$. 
\end{proposition}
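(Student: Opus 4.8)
The plan is to check that $\overline{Sz}$ is functorial on $\invcat$ and then exhibit the natural transformation $\eta$ to the identity functor whose components are surjective on objects.

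First I would define $\overline{Sz}$ on morphisms. Given a functor $F : \mathcal{C} \to \mathcal{D}$ of inverse categories, I want to produce a functor $\overline{Sz}(F) : \overline{Sz}(\mathcal{C}) \to \overline{Sz}(\mathcal{D})$. The natural candidate is to apply $F$ "pointwise": on the underlying posets, send $A \in P(\mathcal{C})$ to $F(A) = \{F(a) ; a \in A\}$, and on arrows of $\overline{Sz}(\mathcal{C}) = P(\mathcal{C}) \rtimes_{(\varepsilon,\mathfrak{B})} \mathcal{C}$ send $(A,s)$ to $(F(A), F(s))$. I would first verify that $A \mapsto F(A)$ is well-defined and order-preserving: if $A \in P_{e,X}(\mathcal{C})$ then by Proposition \ref{prop.functors.inverse.categories} every $F(a)$ has outer range $F(X)$ and inner range $F(e)=F(ee^\circ)=F(e)F(e)^\circ$, and $F(e) \in E(\mathcal{D}(F(X),F(X)))$, so $F(A) \in P_{F(e),F(X)}(\mathcal{D})$; finiteness is clear. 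For the order, recall $A \leq B$ means $X=Y$, $e \leq f$, $eB \subseteq A$; applying $F$ gives $F(e) \leq F(f)$ (functors preserve the natural order, since it is defined by idempotent multiplication) and $F(e)F(B) = F(eB) \subseteq F(A)$, so $F(A) \leq F(B)$. Then I would check that $(A,s) \mapsto (F(A),F(s))$ lands in $\overline{Sz}(\mathcal{D})$ — this needs $o\varepsilon(F(A)) = or(F(s))$ and $i\varepsilon(F(A)) \leq ir(F(s))$, both immediate from $o\varepsilon(A)=or(s)$, $i\varepsilon(A) \leq ir(s)$ and Proposition \ref{prop.functors.inverse.categories} — and that it preserves composition: $(A,s)(B,t)=(A,st)$ is defined iff $A = \mathfrak{B}_s(B) = sB$ and $st$ is defined in $\mathcal{C}$; applying $F$, $F(A) = F(sB) = F(s)F(B) = \mathfrak{B}_{F(s)}(F(B))$ and $F(s)F(t)=F(st)$ is defined in $\mathcal{D}$, so $(F(A),F(s))(F(B),F(t)) = (F(A),F(st))$ as required. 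Identities are preserved since $F(1_X)=1_{F(X)}$. Functoriality of $\overline{Sz}$ itself ($\overline{Sz}(G \circ F) = \overline{Sz}(G) \circ \overline{Sz}(F)$ and $\overline{Sz}(Id_{\mathcal{C}})=Id_{\overline{Sz}(\mathcal{C})}$) is then immediate from the pointwise definition.

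Next I would define the natural transformation $\eta : \overline{Sz} \to Id_{\invcat}$. For an inverse category $\mathcal{C}$, the component $\eta_{\mathcal{C}} : \overline{Sz}(\mathcal{C}) \to \mathcal{C}$ is the "forgetful" functor sending the object corresponding to $x = A \in P(\mathcal{C})$ — more precisely the idempotent object $(A, i\varepsilon(A))$ — to the object $o\varepsilon(A) \in \mathcal{C}^{(0)}$, and sending an arrow $(A,s)$ to $s \in \mathcal{C}$. I would check this is a functor: it visibly respects composition, $(A,s)(B,t)=(A,st) \mapsto st = s \cdot t$, and identities. Naturality with respect to $F : \mathcal{C} \to \mathcal{D}$ is the commutativity of $\eta_{\mathcal{D}} \circ \overline{Sz}(F) = F \circ \eta_{\mathcal{C}}$, which on an arrow $(A,s)$ reads $\eta_{\mathcal{D}}(F(A),F(s)) = F(s) = F(\eta_{\mathcal{C}}(A,s))$ — trivially true. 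Finally, surjectivity of $\eta_{\mathcal{C}}$ on objects: given any $X \in \mathcal{C}^{(0)}$, the singleton $\{1_X\} \in P_{1_X,X}(\mathcal{C})$ is an object of $\overline{Sz}(\mathcal{C})$ with $o\varepsilon(\{1_X\}) = X$, so $X$ is in the image.

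I do not expect a serious obstacle here; everything reduces to bookkeeping with the definitions, Proposition \ref{prop-defi-equiv-invcat}, and Proposition \ref{prop.functors.inverse.categories}. The one point requiring a moment's care is confirming that functors between inverse categories preserve the natural order $\leq$ on idempotents (needed to show $A \leq B \Rightarrow F(A) \leq F(B)$); this follows because $e \leq f$ iff $e = ef$, and $F(e)=F(ef)=F(e)F(f)$, together with the observation (again from Proposition \ref{prop.functors.inverse.categories}) that $F$ sends idempotents to idempotents. A secondary point worth stating explicitly is that $\overline{Sz}(F)$ is automatically compatible with the inverse structures, $\overline{Sz}(F)((A,s)^\circ) = (\overline{Sz}(F)(A,s))^\circ$, which is guaranteed by Proposition \ref{prop.functors.inverse.categories} once $\overline{Sz}(F)$ is known to be a functor between inverse categories, but can also be checked directly from $(A,s)^\circ = (\mathfrak{B}_{s^\circ}(A), s^\circ)$ and $F(\mathfrak{B}_{s^\circ}(A)) = \mathfrak{B}_{F(s)^\circ}(F(A))$.
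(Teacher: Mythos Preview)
Your proposal is correct and follows essentially the same approach as the paper: define $\overline{Sz}(F)(A,s) = (F(A),F(s))$, check it is a functor using that functors between inverse categories preserve the inverse structure (Proposition \ref{prop.functors.inverse.categories}), and take $\eta_{\mathcal{C}}(A,s) = s$ as the component of the natural transformation. You actually supply more verification than the paper does (order-preservation of $A \mapsto F(A)$, preservation of composition, naturality, surjectivity), all of which is routine as you anticipated.

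Two very minor remarks. First, the objects of $\overline{Sz}(\mathcal{C}) = P(\mathcal{C}) \rtimes \mathcal{C}$ are the pairs $(A, 1_{o\varepsilon(A)})$, not $(A, i\varepsilon(A))$ as you wrote; this does not affect your argument since you only use $o\varepsilon(A)$. Second, for surjectivity of $\eta_{\mathcal{C}}$ you check objects; one should also note surjectivity on morphisms, which is equally easy: for any $s \in \mathcal{C}$ the pair $(\{s\}, s)$ lies in $\overline{Sz}(\mathcal{C})$ and maps to $s$. The paper itself omits this verification entirely.
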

\begin{proof}

    Let $\calC, \calD$ be two inverse categories and let $f: \calC \to \calD$ be a functor. Given  a subset $A \subset \calC^{(0)}$, let $f(A)$ be the set $f(A) = \{f(s); s \in A\} \subset \calD^{(0)}$. Since functors preserve the inverse structure, given a morphism $(A,s) \in \overline{Sz}(\calC)$, if $(A,s) \in P_{e,X} (\calC)$ then $(f(A),f(s)) \in P_{f(e),f(X)} (\calD)$; a straightforward computation shows that  $$\overline{Sz}(f) : \overline{Sz}(\calC) \to \overline{Sz}(\calD), \ \ \ \overline{Sz}(f) (A,s) = (f(A),f(s))$$    is a functor. Moreover,  if $g : \calD \to \mathcal{E}$ is  another functor between inverse categories then $\overline{Sz}(g \circ f) = \overline{Sz}(g )\circ\overline{Sz}( f).$ Finally, the natural transformation from $\overline{Sz}$ to $Id_{\invcat}$ is given by  $$\eta_{\mathcal{C}} :\overline{Sz}(\mathcal{C}) \to \mathcal{C},  \ \ \ \ \eta_{\mathcal{C}} (A,s) = s.$$

\end{proof}
\begin{remark}
$\overline{Sz}(\calC)$ comes equipped with the natural order of an inverse category: if $A \in \pex{e}{X}$, $B \in \pex{f}{Y}$ and $(A,s), (B,t) \in \overline{Sz}(\calC)$, then $(A,s) \preceq (B,t)$ if and only if $(A,s) = (A,s)(s^\circ A, s^\circ ) (B,t)$. Simple computations show then that 
\begin{align*}
(A,s) \preceq (B,t) & \iff  X=Y, \: e\leqslant f, \: eB = A, \: s \leq t \\
& \implies A \leq B \textrm{ and } s \leq t, 
    \end{align*}
which shows that if $(A,s) \preceq (B,t)$ then we also have that  $(A,s) \leq (B,t)$ with respect to the ordering induced by the \textit{product order} on $P(\calC) \times C$, which is precisely 
\[
(A,s) \leq (B,t) \iff A \leq_{P(\calC)} B \textrm{ and } s \leq_{\calC} t,   
\]
so that the product order refines the natural order on $\overline{Sz}(\calC)$.     
\end{remark}

In what follows we will show that the order induced in $\overline{Sz}(\calC)$ by the product order is compatible with the category structure in a precise manner. 
The next definition is inspired by the definition of ordered groupoids by Lawson \cite{lawsonlivro} and Hollings' \cite{hollings-thesis-partial} definition of ordered categories.

\begin{definition}\label{defi-ordered-invcat}
	We say that an inverse category $(\mathcal{C}, (\:\:)^\circ)$ endowed with a partial ordering $\leqslant$ is an \textit{ordered inverse category} when for $s,t,s',t' \in \mathcal{C}$ and idempotent arrows $e,f\in\mathcal{C}$,
	\begin{enumerate}[(I)]
		\item if  $s\leqslant s'$ and $t\leqslant t'$, then $ st\leqslant s't'$;
		\item if $s\leqslant t$, then $id(s)\leqslant id(t)$ and  $ir(s)\leqslant ir(t)$;
		\item if $e\leqslant id(s)$, then there exists a unique morphism $_{e|}s \in \mathcal{C}$  s.t. $ _{e|}s\leqslant s$  and $id(_{e|}s)=e$;
		\item if $f\leqslant ir(t)$, then there exists a unique  morphism $t_{|f} \in \mathcal{C}$  s.t. $t_{|f}\leqslant t$ and $ir(t_{|f})=f$.
	\end{enumerate}
\end{definition} 

The arrow of item (III) is called the \textit{restriction} to $e$ and the arrow of (IV) is the \textit{corestriction} to $e$.

\begin{remark}\label{remark-invcat-nat-orderedcat}
	Every inverse category is an ordered inverse category by its natural ordering where the restriction and the corestriction are defined as follows: for the last one if $e\leqslant id(s)$ and $f\leqslant ir(t)$ define $_{e|}s:=se$ and $t_{|f}:= ft$.
\end{remark}

\begin{definition}\label{defi-ord-for-szendrei-exp}
The partial order $\leqslant$ on  $\overline{Sz}(\mathcal{C})$ is the one induced by the product order on $P(\calC) \times \calC$, given by 
 $(A,s)\leqslant (B,t)$ if and only if $A\leqslant_{P(\mathcal{C})}B$ and $ s\leqslant_{\mathcal{C}}t.$
\end{definition}

Let us open up this definition: if $(A,s),(B,t)\in \overline{Sz}(\mathcal{C})$ are pairs with
$A\subset Costar(X),\: A\subset \mathscr{R}_e \text{ and } B\subset Costar(Y),\: B\subset \mathscr{R}_f,$ where $X,Y\in \mathcal{C}^{(0)}$ and $e,f \in \rid{\mathcal{C}}$ then, by Definition \ref{defi-partial-order-Psets-invcat} and Proposition \ref{prop-inv-cat-ord}, $$(A,s)\leqslant (B,t) \iff X=Y,\:  e\leqslant f,\: eB\subseteq A, \text{ and } \: s=ss^\circ t.$$

\begin{lemma}\label{lemma-szendrei-exp-is-orderedcat}
	The Szendrei expansion $(\overline{Sz}(\mathcal{C}),(\:\:)^\circ)$ with $\leqslant$ is an ordered inverse category.
\end{lemma}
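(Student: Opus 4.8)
The plan is to verify the four axioms (I)--(IV) of Definition \ref{defi-ordered-invcat} directly for $\overline{Sz}(\mathcal{C})$ equipped with the product order of Definition \ref{defi-ord-for-szendrei-exp}, exploiting the fact that both $P(\mathcal{C})$ and $\mathcal{C}$ carry the natural order, and that the order on $\overline{Sz}(\mathcal{C})$ is componentwise. Throughout I will use the explicit description of the order unwrapped just before the lemma: $(A,s)\leqslant (B,t)$ iff $X=Y$, $e\leqslant f$, $eB\subseteq A$, and $s=ss^\circ t$, where $A\in\pex{e}{X}$, $B\in\pex{f}{Y}$.

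For axiom (I), suppose $(A,s)\leqslant (A',s')$ and $(B,t)\leqslant (B',t')$ and that the composites $(A,s)(B,t)$ and $(A',s')(B',t')$ are defined in $\overline{Sz}(\mathcal{C})$; I must show $(A,s)(B,t)\leqslant (A',s')(B',t')$. Since the composite $(A,s)(B,t) = (A,st)$ exists iff $A = \mathfrak{B}_s(B)=sB$ and $st$ is defined in $\mathcal{C}$, and similarly $(A',s')(B',t')=(A',s't')$, the claim reduces to showing $A\leqslant_{P(\mathcal{C})} A'$ (which is hypothesis) and $st\leqslant_{\mathcal{C}} s't'$. The latter follows because $\mathcal{C}$ with its natural order already satisfies (I) --- it is an ordered inverse category by Remark \ref{remark-invcat-nat-orderedcat} --- together with the last clause of Proposition \ref{prop-inv-cat-ord}. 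Axiom (II) is then immediate: if $(A,s)\leqslant (B,t)$ then from Lemma \ref{lemma-invcat-sdp-is-invcat} we have $id(A,s)=(\theta_{s^\circ}(A),s^\circ s)$ and $ir(A,s)=(A,ss^\circ)$, and one checks componentwise that $id(A,s)\leqslant id(B,t)$ and $ir(A,s)\leqslant ir(B,t)$ using that $P(\mathcal{C})$ and $\mathcal{C}$ are ordered and that $i\varepsilon$ and the action maps $\mathfrak{B}_s$ preserve order (the Bernoulli fibred action verification, and the monotonicity established in Section \ref{section-sd-product}).

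The heart of the proof --- and the step I expect to be the main obstacle --- is axioms (III) and (IV), the existence and uniqueness of restrictions and corestrictions. Given $(E,e)\in\rid{\overline{Sz}(\mathcal{C})}$ with $(E,e)\leqslant id(A,s)=(\theta_{s^\circ}(A),s^\circ s)$, I must produce a \emph{unique} arrow $_{(E,e)|}(A,s)\leqslant(A,s)$ with inner source equal to $(E,e)$. The natural candidate is the componentwise restriction in each factor: take the $\mathcal{C}$-restriction ${}_{e|}s = se$ (using Remark \ref{remark-invcat-nat-orderedcat}), and set the first coordinate to the unique ideal forced by the order conditions --- namely $\mathfrak{B}_{se}(E)=seE$, which by the action axioms lies in the correct $P_{\cdot,\cdot}(\mathcal{C})$. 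One then checks that $(seE, se)$ is a legitimate arrow of $\overline{Sz}(\mathcal{C})$, that it satisfies $(seE,se)\leqslant (A,s)$, and that $id(seE,se) = (E,e)$; the latter uses Proposition \ref{proposition.theta_s.bijection} (that $\theta_{s^\circ}$ and $\theta_s$ are mutually inverse bijections) and item (I) of that proposition ($\theta_e$ fixes points below $e$). Uniqueness follows because any arrow below $(A,s)$ with a prescribed inner source must, componentwise, be below $s$ in $\mathcal{C}$ with prescribed inner source (forcing the second coordinate by uniqueness in $\mathcal{C}$) and must have its first coordinate pinned down by the three order conditions $X=Y$, $e\leqslant f$, $eB\subseteq A$ applied to idempotents --- these conditions together with membership in the right $P_{e,X}(\mathcal{C})$ leave no freedom. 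Axiom (IV) is entirely dual, using the corestriction $t_{|f}=ft$ in $\mathcal{C}$ and the bijectivity of $\mathfrak{B}_s$ on the relevant ideals. The only genuine subtlety is bookkeeping: making sure that at each stage the pair one writes down actually satisfies the membership constraint $o\rho(x)=or(s)$, $i\rho(x)\leqslant ir(s)$ defining the arrows of $\overline{Sz}(\mathcal{C})$, which is where the fibred-action axioms (I)--(III) of Definition \ref{defi-invcat-action} are used repeatedly.
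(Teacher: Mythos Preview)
Your approach is essentially identical to the paper's: the paper also declares (I) and (II) routine and concentrates on (III), defining the restriction of $(A,s)$ to an idempotent $(E,e)\leqslant id(A,s)$ as $(sE,se)$, which coincides with your candidate $(seE,se)$ since $eE=E$ (because $i\varepsilon(E)\leqslant e$ for any idempotent $(E,e)$ of $\overline{Sz}(\mathcal{C})$). The verification that this is a well-defined arrow below $(A,s)$ with inner source $(E,e)$, and the dual construction of the corestriction as $(E,es)$, proceed just as you outline.

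There is one imprecision in your uniqueness argument. You say the first coordinate of a competitor $(B,t)$ is ``pinned down by the three order conditions $X=Y$, $e\leqslant f$, $eB\subseteq A$'', but the inclusion is only one-sided and does not by itself determine $B$. What actually forces $B$ is the \emph{equality} $id(B,t)=(E,e)$: this reads $t^\circ B=E$, and since $(B,t)\in\overline{Sz}(\mathcal{C})$ means $i\varepsilon(B)\leqslant tt^\circ$, hence $tt^\circ B=B$, one gets $B=t(t^\circ B)=tE=seE=sE$ (using $t=se$, which you have already obtained from uniqueness of restrictions in $\mathcal{C}$). This is precisely the paper's argument; once you replace the order-condition appeal with this computation, the proof is complete.
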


\begin{proof}
	We must verify each condition of Definition \ref{defi-ord-for-szendrei-exp}. As the first two conditions are direct computations, we will devote our attention to the third item and indicate the main idea of the last one.

 We will deal with restrictions and corestrictions, in this order.

Suppose  $(E,f)$ an idempotent in $\in\overline{Sz}(\mathcal{C})$ and $(A,s)\in\overline{Sz}(\mathcal{C})$; then there exist objects $X,Y$ in $\mathcal{C}$ and  such that $E\subset Costar(Y)$, $ E\subset \mathscr{R}_i$, $f: Y \rightarrow Y$, $i\leqslant f $    and   $ A\subset Costar(X)$, $A\subset \mathscr{R}_e$,  $s:U\rightarrow X$, $e\leqslant ss^\circ.$

Suppose that $(E,f)\leqslant id(A,s)$, which means that $(E,f) \leqslant (s^\circ A, s^\circ s)$, that is,  $E\leqslant s^\circ A$  and $f \leqslant s^\circ s$. We will prove that $ _{(E,f)|}(A,s):=(sE,sf)$   is the restriction of   $(A,s) $ to $(E,f)$.

We begin by proving that $(sE,sf)$ is a well-defined element of the Szendrei expansion.
Since $s^\circ A\subset 
Costar(U)$ and $E\leqslant s^\circ A$, we conclude that $Y=U$ and therefore the composition $sf$ exists in $\mathcal{C}$. 
The set $sfE$ is well defined since $od(sf)=od(f)=o\varepsilon(E)=Y.$ 
   The pair $(sE, sf)$ is an element of $\overline{Sz}(\mathcal{C})$, due to the fact that $or(sf)=or(s)=X=o\varepsilon(sE)$, and $i\leqslant f$ implies  $i\varepsilon(sE) = sis^\circ \leqslant sfs^\circ = ir(sf).$

In what follows we will need the inner source of $(sE,sf)$, which is given by $id(sE,sf)= ((sf)^\circ sE, (sf)^\circ (sf))= (E,f)$. In fact, from $f\leqslant s^\circ s$, $E\subset \mathscr{R}_i$ and $i\leqslant f$ we get  $(sf)^\circ  sE = fs^\circ sE=fE=fiE=iE=E$; furthermore, $(sf)^\circ (sf)=fs^\circ s f=f$.

In order to prove that $(sE,sf)\leqslant (A,s)$, we need to show that $sE\leqslant A$ and $sf \leqslant s $, that is, to show that $ o\varepsilon(sE) = o\varepsilon(A)$, $i\varepsilon(sE)\leqslant i\varepsilon(A)$, $i\varepsilon(sE)A\subset sE$ and that $sf\leqslant s$. Indeed, respectively: the outer moment map condition follows from $o\varepsilon(sE)=or(s)=o\varepsilon(A)$;
the initial hypothesis $E\leqslant s^\circ A$ implies $i\leqslant s^\circ e s$, so $i = s^\circ esi$, and $ss^\circ es=es$ implies $sis^\circ=esis^\circ$; therefore $i\varepsilon(sE)=sis^\circ \leqslant e$;
since $E\leqslant s^\circ A$, we have that $is^\circ A\subset E$, which yields  $sis^\circ A\subset sE$; finally, the inequality $sf \leq s$ is a consequence of $(sf)(sf)^\circ s=sfs^\circ s= sf$, since $f\leqslant s^\circ s$.

It remains to be shown that the restriction is unique. Let $(B,t)$ be an arrow in $\overline{Sz}(\mathcal{C})$ such that $(B,t)\leqslant (A,s) \text{  and  }  (E,f) = id(B,t)$, that is,  $B\leqslant A, \: t\leqslant s, \: t^\circ B= E \text{  and  } t^\circ t =f.$ Using the last item of Proposition \ref{prop-inv-cat-ord}, from  $t\leqslant s$ we obtain $t=st^\circ t =sf.$ Furthermore, as $(B,t)\in \overline{Sz}(\mathcal{C})$, $tt^\circ B=B$, so $t^\circ B=E$ implies that $B=tt^\circ B = tE.$ Finally, $tE = sfE=sE$, because $(E,f)\in\overline{Sz}(\mathcal{C})$. In conclusion, $(B,t)=(sE, sf).$

We conclude from the previous computations that $_{(E,f)|}(A,s):=(sE,sf)$ is the restriction of the arrow $(A,s)$ to $(E,f)$.

Following similar arguments, we can verify that given an arrow $(A,s)$ and a restriction idempotent $(E,f)$ such that $(E,f)\leqslant (A,s)$, then $(A,s)_{|(E,f)}=(E,fs)$ defines the corestriction. \end{proof}

Our next move is towards defining another operation among arrows. After Dewolf-Pronk \cite{dewolf-ehresmann} Proposition 3.2, we state the next lemma.

\begin{lemma}[\cite{dewolf-ehresmann}]\label{lemma-meets-idemp-invcat}
	Let $X$ be an object of the inverse category $(\mathcal{C}, (\:\:)^\circ)$. The set $\ridx{C}{X}$ of idempotent morphisms in $X$ is a meet semilattice with respect to the natural partial order from $\mathcal{C}$ and $e\wedge f:=ef$.
\end{lemma}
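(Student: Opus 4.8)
The plan is to identify $\ridx{C}{X}$ with a commutative band and to check that the natural order of $\mathcal{C}$, once restricted to $\ridx{C}{X}$, is the usual semilattice order under which the product is the meet. First I would record the algebraic facts that make $\ridx{C}{X}$ a band. Any two $e,f\in\ridx{C}{X}$ are endomorphisms of $X$, hence composable, so $ef$ is defined. For an idempotent $e$ one has $e^{\circ}=e$ by uniqueness of inverses (the equations $e\,e\,e=e$ already exhibit $e$ as an inverse of itself), so $e=e^{2}=ee^{\circ}$, and likewise $f=ff^{\circ}$; Proposition \ref{prop-defi-equiv-invcat}(ii) then gives $ef=(ee^{\circ})(ff^{\circ})=(ff^{\circ})(ee^{\circ})=fe$, i.e.\ idempotents at $X$ commute. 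From $ef=fe$ we get $(ef)^{2}=e(fe)f=e(ef)f=e^{2}f^{2}=ef$, so $ef\in\ridx{C}{X}$ and $\ridx{C}{X}$ is closed under the proposed operation $e\wedge f:=ef$.

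Next I would translate the natural order. For $e,f\in\ridx{C}{X}$, Proposition \ref{prop-inv-cat-ord}(iii) together with $e^{\circ}=e$ gives $e\leqslant f\iff e=ee^{\circ}f=ef$, which by commutativity is the same as $e=fe$. With this dictionary the rest is a short computation in the band: $(ef)f=ef^{2}=ef$ and $(ef)e=e(fe)=e(ef)=e^{2}f=ef$, so $ef\leqslant f$ and $ef\leqslant e$, proving $ef$ is a lower bound of $\{e,f\}$. If $g\in\ridx{C}{X}$ satisfies $g\leqslant e$ and $g\leqslant f$, then $g=ge=gf$, hence $g(ef)=(ge)f=gf=g$, i.e.\ $g\leqslant ef$; so $ef$ is the greatest lower bound of $\{e,f\}$. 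Therefore the meet of $e$ and $f$ exists in $\ridx{C}{X}$ and equals $ef$, which is the claim.

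There is no genuine obstacle in this argument; the only point deserving a little care is verifying that the abstract natural order of $\mathcal{C}$, defined in Proposition \ref{prop-inv-cat-ord} through multiplication by idempotents, really collapses on $\ridx{C}{X}$ to the concrete relation $e\leqslant f\iff ef=e$. Once that and the commutativity of idempotents are in place, closure of $\ridx{C}{X}$ under $e\wedge f=ef$ and the greatest-lower-bound property follow by the direct computations indicated above.
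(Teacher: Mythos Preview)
Your proof is correct and complete. The paper itself does not supply a proof of this lemma; it simply cites \cite{dewolf-ehresmann} (Proposition~3.2 there) and moves on. Your argument is the standard one for commutative bands: you use the uniqueness of inverses to get $e^{\circ}=e$, invoke Proposition~\ref{prop-defi-equiv-invcat}(ii) for commutativity of idempotents, verify closure under the product, and then use Proposition~\ref{prop-inv-cat-ord}(iii) to identify the natural order on $\ridx{C}{X}$ with the relation $e\leqslant f\iff ef=e$, from which the meet property is a two-line computation. There is nothing to compare against in the paper, but what you have written is exactly the self-contained justification one would expect.
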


It is worth mentioning that each $\ridx{C}{X}$ has $1_X$ as top element.

Continuing, we extend the pseudo product from ordered groupoids \cite{lawsonlivro} to ordered inverse categories.

\begin{definition}\label{defi-pseudoprod-invcat}
	Let $\mathcal{C}$ be an ordered inverse category such that for each object $X$ the set $\ridx{C}{X}$ is a meet semilattice. Let $s,t\in\mathcal{C}$ be arrows such that there exists $id(s)\wedge ir(t)$. The \textit{pseudo product} of $s$ and $t$ is $$s\star t:= (_{id(s)\wedge ir(t)|}s )(t_{|id(s)\wedge ir(t)}).$$
\end{definition}

\begin{remark}
	Consider an inverse category $\mathcal{C}$ with the natural partial order and let $s,t$ be arrows in $\mathcal{C}$.  
 The composition $st$ is defined if and only if the composition 
$ s^\circ stt^\circ = id(s)\wedge ir(t)$ is defined and, in this case, it follows from  Lemma \ref{lemma-meets-idemp-invcat} and \ref{remark-invcat-nat-orderedcat} that 
$s\star t = s(id(s) \wedge ir(t)) t = 
s(s^\circ stt^\circ)(s^\circ stt^\circ)t=st.$
Therefore $st$ is defined if and only if $s\star t$ is, and these elements are equal. 
\end{remark}

We turn now to study the pseudo product in $\overline{Sz}(\mathcal{C})$ where, in constrast to the previous remark,  it is a proper extension of the composition of this category. We begin by computing the wedges of idempotents; the next lemma is inspired by results of Gilbert
 \cite{gilbert-pdgexp}.

\begin{lemma}\label{lemma-Szendrei-wedge}
	Let $(E,i),(F,j)$  be idempotent morphisms in $\overline{Sz}(\mathcal{C})$, such that there exists  the composition $ij$ in $\mathcal{C}$. Then the wedge product $$(E,i)\wedge (F,j):=(i\varepsilon(F)E\cup i\varepsilon(E)F, ij)$$ is an idempotent arrow in Szendrei's expansion, and is the greatest lower bound of $(E,i)$ and $(F,j)$.
\end{lemma}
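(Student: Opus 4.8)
The plan is to verify three things in sequence: (a) that the proposed pair $(i\varepsilon(F)E\cup i\varepsilon(E)F, ij)$ genuinely lies in $\overline{Sz}(\mathcal{C})$ and is idempotent; (b) that it is a lower bound for both $(E,i)$ and $(F,j)$ in the order $\leqslant$ of Definition \ref{defi-ord-for-szendrei-exp}; and (c) that it dominates every common lower bound. Throughout I will use that the composition $ij$ exists, so $i,j$ have a common object $X$ (both live in $\ridx{C}{X}$) and that $\ridx{C}{X}$ is a meet semilattice with $i\wedge j = ij$ by Lemma \ref{lemma-meets-idemp-invcat}; in particular $ij = ji$ is again idempotent. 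Note also that since $(E,i),(F,j)$ are idempotents in $\overline{Sz}(\mathcal{C})$ we have $E\subset Costar(X)$, $i\varepsilon(E)\leqslant i$, $E\subset\mathscr{R}_{i\varepsilon(E)}$, and symmetrically for $(F,j)$.

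\textbf{Step (a).} Set $C := i\varepsilon(F)E\cup i\varepsilon(E)F$. Each element of $i\varepsilon(F)E$ has the form $i\varepsilon(F)a$ with $a\in E$, so $or(i\varepsilon(F)a)=or(a)=X$ and $ir(i\varepsilon(F)a)=i\varepsilon(F)\, i\varepsilon(E)\, i\varepsilon(F) = i\varepsilon(E)\wedge i\varepsilon(F)$ using that idempotents in $\ridx{C}{X}$ commute; the same computation applies to elements of $i\varepsilon(E)F$. Hence $C$ is finite, $C\subset Costar(X)$, and $C\subset\mathscr{R}_{k}$ where $k := i\varepsilon(E)\, i\varepsilon(F) = i\varepsilon(E)\wedge i\varepsilon(F)$, so $C\in P_{k,X}(\mathcal{C})$ and $i\varepsilon(C)=k$. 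Since $i\varepsilon(E)\leqslant i$ and $i\varepsilon(F)\leqslant j$ we get $k\leqslant ij$, so $(C,ij)\in\overline{Sz}(\mathcal{C})$. It is idempotent because $ij$ is idempotent in $\mathcal{C}$ and $i\varepsilon(C)=k\leqslant ij$ (which is exactly the description of $E(\overline{Sz}(\mathcal{C}))$ recalled just after Definition \ref{defi-invcat-semidirect-prodc}).

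\textbf{Step (b).} To show $(C,ij)\leqslant (E,i)$ I must check, by the unpacked form of $\leqslant$ given after Definition \ref{defi-ord-for-szendrei-exp}, that the outer objects agree (both $X$), that $ij\leqslant i$ (clear, since $ij=i(ij)$ as $ij$ is idempotent, equivalently $ij = (ij)(ij)^\circ i$), that $i\varepsilon(C)=k\leqslant i\varepsilon(E)$ (clear), and that $i\varepsilon(C)\,E = k E \subseteq C$. For the last, $kE = i\varepsilon(E)i\varepsilon(F)E \subseteq i\varepsilon(F)E\subseteq C$ since $i\varepsilon(E)$ acts as identity on $E$ (as $E\subset\mathscr{R}_{i\varepsilon(E)}$, each $a\in E$ satisfies $i\varepsilon(E)a=a$). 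Symmetrically $(C,ij)\leqslant (F,j)$, using $kF\subseteq i\varepsilon(E)F\subseteq C$.

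\textbf{Step (c).} Suppose $(G,g)\in\overline{Sz}(\mathcal{C})$ is idempotent with $(G,g)\leqslant (E,i)$ and $(G,g)\leqslant (F,j)$. Then $g\leqslant i$ and $g\leqslant j$ in $\mathcal{C}$, so by Lemma \ref{lemma-meets-idemp-invcat} $g\leqslant i\wedge j = ij$. We also have $i\varepsilon(G)\leqslant i\varepsilon(E)$ and $i\varepsilon(G)\leqslant i\varepsilon(F)$, hence $i\varepsilon(G)\leqslant i\varepsilon(E)\wedge i\varepsilon(F)=k=i\varepsilon(C)$. It remains to see $i\varepsilon(G)\,C\subseteq G$. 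From $(G,g)\leqslant (E,i)$ we have $i\varepsilon(G)E\subseteq G$, and from $(G,g)\leqslant (F,j)$ we have $i\varepsilon(G)F\subseteq G$; therefore $i\varepsilon(G)\,C = i\varepsilon(G)\,i\varepsilon(F)E \cup i\varepsilon(G)\,i\varepsilon(E)F \subseteq i\varepsilon(G)E\cup i\varepsilon(G)F\subseteq G$, where the first containment uses that $i\varepsilon(G)i\varepsilon(F)=i\varepsilon(G)$ and $i\varepsilon(G)i\varepsilon(E)=i\varepsilon(G)$ (consequences of $i\varepsilon(G)\leqslant i\varepsilon(F)$ and $i\varepsilon(G)\leqslant i\varepsilon(E)$ together with commutativity of idempotents). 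This gives $(C,ij)\leqslant (G,g)$... wait, the other direction: it gives $(G,g)\leqslant (C,ij)$, so $(C,ij)$ is the greatest lower bound.

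\textbf{Main obstacle.} The calculations themselves are routine once one is disciplined about the bookkeeping; the one point that requires genuine care is keeping straight the two ``layers'' of the order on $\overline{Sz}(\mathcal{C})$ — the inequality between the $\mathcal{C}$-components and the $eB\subseteq A$-type condition between the $P(\mathcal{C})$-components — and in particular recognizing that the relevant idempotent on the set-component is $i\varepsilon(E)\wedge i\varepsilon(F)$ rather than $ij$ itself (these need not be equal, since $E$ and $F$ are only constrained by $i\varepsilon(E)\leqslant i$, $i\varepsilon(F)\leqslant j$). Everything else reduces to the commutativity of idempotents in a single $\ridx{C}{X}$ and to the defining relation $i\varepsilon(E)a = a$ for $a\in E$.
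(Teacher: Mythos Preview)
Your proof is correct and follows exactly the same route as the paper: set up notation, then verify directly that the candidate pair lies in $\overline{Sz}(\mathcal{C})$, is idempotent, is a lower bound, and dominates every common lower bound. The paper's own proof only writes out the notational setup (identifying $i\varepsilon(E)=e$, $i\varepsilon(F)=f$, and $X=Y$) and then simply states ``it can be shown'' for the remaining verifications; your Steps~(a)--(c) supply precisely those omitted details, including the key observation that $i\varepsilon(C)=i\varepsilon(E)\,i\varepsilon(F)$ rather than $ij$. The only cosmetic issue is the self-correction at the end of Step~(c), which you would want to clean up in a final write-up.
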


\begin{proof}
	Given $(E,i),(F,j)\in \overline{Sz}(\mathcal{C})$, suppose that $E\subset Costar(X),\: E\subset \mathscr{R}_e,\: i:X\rightarrow X,\: e\leqslant i$,   and $ F\subset Costar(Y),\: F\subset \mathscr{R}_f,\: j:Y\rightarrow Y,\: f\leqslant j .$ By hypothesis $\exists ij\implies X=Y. $ In addition, as $i\varepsilon(E)=e$ and $i\varepsilon(F)=f$, the wedge product is $(E,i)\wedge (F,j):=(fE\cup eF, ij).$ 
	
It can be shown that the wedge product is an element of $\overline{Sz}(\mathcal{C})$, and that it is indeed the greatest lower bound of $(E,i)$ and $(F,j)$.
\end{proof}

Thanks to the formula for the wedge of idempotents in a Szendrei's expansion we can now describe the pseudo product.

\begin{proposition}\label{propo-pseudoproduc-invcat}
Let $(A,s),(B,t)$ be arrows in Szendrei's expansion $\overline{Sz}(\mathcal{C})$. The pseudo product $(A,s)\star(B,t)$ is defined in  $\overline{Sz}(\mathcal{C})$ if and only if $st$ is defined in $\mathcal{C}$ and, in this case, $$(A,s)\star(B,t)= (s(i\varepsilon(B))s^\circ A\cup (i\varepsilon(A))sB, st).$$ 
\end{proposition}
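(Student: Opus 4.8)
The plan is to peel back Definition~\ref{defi-pseudoprod-invcat} and reduce everything to the wedge formula of Lemma~\ref{lemma-Szendrei-wedge} together with the explicit restriction and corestriction formulas produced inside the proof of Lemma~\ref{lemma-szendrei-exp-is-orderedcat}. First I would record the idempotents in play. By the observation following Lemma~\ref{lemma-invcat-sdp-is-invcat}, specialized to the Bernoulli action (so that $\theta_{s^{\circ}}(A)=s^{\circ}A$), one has $id(A,s)=(s^{\circ}A,\,s^{\circ}s)$ and $ir(B,t)=(B,\,tt^{\circ})$. Writing $A\in\pex{e}{X}$ and $B\in\pex{f}{Y}$, so $i\varepsilon(A)=e$ and $i\varepsilon(B)=f$, and recalling that $i\varepsilon(s^{\circ}A)=s^{\circ}es$ (this was computed while verifying that $\mathfrak{B}$ is a fibred action), Lemma~\ref{lemma-Szendrei-wedge} tells us that $id(A,s)\wedge ir(B,t)$ exists if and only if the composition $(s^{\circ}s)(tt^{\circ})$ is defined in $\mathcal{C}$; as $s^{\circ}s$ is an endomorphism of $od(s)$ and $tt^{\circ}$ an endomorphism of $or(t)$, this holds precisely when $od(s)=or(t)$, i.e.\ precisely when $st$ is defined in $\mathcal{C}$. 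This already yields the ``defined iff $st$ is defined'' half of the statement: once the meet $id(A,s)\wedge ir(B,t)$ exists, the restriction and corestriction called for in Definition~\ref{defi-pseudoprod-invcat} automatically exist because $\overline{Sz}(\mathcal{C})$ is an ordered inverse category by Lemma~\ref{lemma-szendrei-exp-is-orderedcat}.

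Now suppose $st$ is defined and put $(E,i):=id(A,s)\wedge ir(B,t)$. Lemma~\ref{lemma-Szendrei-wedge} gives
\[
(E,i)=\bigl(\, f\,s^{\circ}A\ \cup\ s^{\circ}e s\,B,\ \ s^{\circ}s\,tt^{\circ}\,\bigr).
\]
Substituting into Definition~\ref{defi-pseudoprod-invcat} and using the formulas from the proof of Lemma~\ref{lemma-szendrei-exp-is-orderedcat} --- the restriction $_{(E,i)|}(A,s)=(sE,si)$ and the corestriction $(B,t)_{|(E,i)}=(E,it)$ --- one has $(A,s)\star(B,t)=(sE,si)(E,it)$. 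Composing these two arrows inside $\overline{Sz}(\mathcal{C})$ needs the two composability checks: first $(si)(it)=sit=s(s^{\circ}s\,tt^{\circ})t=st$, which is defined in $\mathcal{C}$; and second $\theta_{si}(E)=siE=sE$, valid because $iE=E$ (as $(E,i)$ is idempotent in $\overline{Sz}(\mathcal{C})$, hence $i\varepsilon(E)\leqslant i$). Therefore $(A,s)\star(B,t)=(sE,\,st)$.

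It remains to simplify the first coordinate. Distributing, $sE=s\,f\,s^{\circ}A\ \cup\ ss^{\circ}e s\,B$; and since $(A,s)\in\overline{Sz}(\mathcal{C})$ forces $e=i\varepsilon(A)\leqslant ir(s)=ss^{\circ}$, we get $ss^{\circ}e=e$, so $sE=s\,f\,s^{\circ}A\ \cup\ e\,s\,B=s\,(i\varepsilon(B))\,s^{\circ}A\ \cup\ (i\varepsilon(A))\,sB$, which is exactly the claimed formula. The only genuinely delicate points --- the main obstacle, such as it is --- are the two composability verifications in $\overline{Sz}(\mathcal{C})$ (especially matching the set coordinates through the action $\theta_{si}$) and keeping straight which set plays the role of ``$E$'' versus ``$F$'' when invoking Lemma~\ref{lemma-Szendrei-wedge}; once $i\varepsilon(s^{\circ}A)=s^{\circ}es$ is in hand, everything else is a routine manipulation using $ss^{\circ}s=s$, the absorption identity $eA=A$ for $e\geqslant i\varepsilon(A)$, and $i\varepsilon(A)\leqslant ss^{\circ}$.
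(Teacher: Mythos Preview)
Your proposal is correct and follows essentially the same route as the paper's own proof: compute the meet $(E,i)=id(A,s)\wedge ir(B,t)$ via Lemma~\ref{lemma-Szendrei-wedge}, plug it into the restriction and corestriction formulas from Lemma~\ref{lemma-szendrei-exp-is-orderedcat}, compose, and simplify using $e\leqslant ss^{\circ}$. Your treatment is in fact slightly cleaner in two respects --- you make the composability check $\theta_{si}(E)=sE$ explicit (the paper glosses over it), and you omit the paper's closing verification that $(sE,st)\in\overline{Sz}(\mathcal{C})$, which is indeed redundant since the pseudo product is by construction a composition of two arrows already in $\overline{Sz}(\mathcal{C})$.
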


\begin{proof}
	Let $(A,s), (B,t)\in\overline{Sz}(\mathcal{C})$ with $ A\subset Costar(X),\:  A\subset \mathscr{R}_e,\: s:U\rightarrow X,\: e\leqslant ss^\circ $,    and $  B\subset Costar(Y),\: B\subset \mathscr{R}_f,\: t:V\rightarrow Y,\: f\leqslant tt^\circ .$ 	

 Let  $(L,l)$ be the idempotent $(L,l):=  id(A,s)\wedge ir(B,t)$. Note that $(L,l) =  (s^\circ A, s^\circ s)\wedge (B,tt^\circ)$ and from Lemma  \ref{lemma-Szendrei-wedge} it follows  that this wedge is defined if and only if $s^\circ s tt^\circ$ is defined, which is equivalent to $st$ being defined. Moreover, we have $s^\circ A\subset \mathscr{R}_{s^\circ i\varepsilon(A) s}$, and therefore the formula of Lemma \ref{lemma-Szendrei-wedge}  yields $$(L,l)  = (\: s i\varepsilon(B)(s^\circ A)\cup s^\circ i\varepsilon(A)sB, s^\circ s tt^\circ) = 
(\: sfs^\circ A\cup s^\circ esB, s^\circ s tt^\circ) .$$

 Computing the restriction and corestriction:  as $s(s^\circ es)=es$ and $s(s^\circ s tt^\circ)= stt^\circ$ we have that   $_{(L,l)|}(A,s) = (sL, sl)= (sfs^\circ A\cup esB, stt^\circ);$ and since $lt= (s^\circ stt^\circ)t= s^\circ s t$, clearly $(B,t)_{|(L,l)}=(L,lt)=(fs^\circ A \cup s^\circ esB, s^\circ st).$ 
		
Finally, as there exists the composition of previous arrows, and $slt=s(s^\circ s tt^\circ )t=st$, we conclude that $$(A,s)\star (B,t) = (sL,sl)(L,lt)=(sL, slt) = (sfs^\circ A\cup es B, st).$$
	Last but not least, we need to ensure that  $(A,s)\star (B,t)$ is an element of $\overline{Sz}(\mathcal{C})$.
Actually, $o\varepsilon(sfs^\circ A) = or(s)=X$, $ o\varepsilon(es)=or(e)=X$ and $or(st)=or(s)=X $ implies $o\varepsilon(sfs^\circ A\cup esB)  =or(st).$ In addition, since $sfs^\circ A, esB\subset \mathscr{R}_{esfs^\circ}$, $(st)(st)^\circ =stt^\circ s^\circ$ and $f\leqslant tt^\circ$, we have that $(esfs^\circ) (st)(st)^\circ = (esfs^\circ) (stt^\circ s^\circ)= esfs^\circ $ which implies that $ i\varepsilon(sfs^\circ A\cup esB) \leqslant ir(st).$ This concludes the proof.

\end{proof}

A natural question is whether  the pseudo product extends the composition in $\overline{Sz}(\mathcal{C})$.
\begin{lemma}\label{lemma-pseudoprod-equal-compositio}
	If there exists the composition of  $(A,s)$ and $(B,t)$ in $\overline{Sz}(\mathcal{C})$, then $(A,s)\star(B,t) = (A,s)(B,t).$
\end{lemma}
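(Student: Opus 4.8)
The plan is to combine the explicit formula for the pseudo product established in Proposition \ref{propo-pseudoproduc-invcat} with the constraints imposed by composability in the semidirect product. First I would unwind Definition \ref{defi-invcat-semidirect-prodc}: the composition $(A,s)(B,t)$ being defined in $\overline{Sz}(\mathcal{C})$ means precisely that $st$ is defined in $\mathcal{C}$ and $A=\mathfrak{B}_s(B)=sB$. For the latter equality to make sense we must have $(s,B)\in\mathcal{C} {_d\times _\varepsilon}P(\mathcal{C})$, hence $o\varepsilon(B)=od(s)$ and $i\varepsilon(B)\leqslant id(s)=s^\circ s$. Since $st$ is defined, Proposition \ref{propo-pseudoproduc-invcat} already tells us that $(A,s)\star(B,t)$ is defined and equals $(s\,i\varepsilon(B)\,s^\circ A\cup i\varepsilon(A)\,sB,\; st)$; so all that remains is to check that the first coordinate of this pair equals $A$, for then $(A,s)\star(B,t)=(A,st)=(A,s)(B,t)$.

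For the summand $i\varepsilon(A)\,sB$ I would use that $A\subset\mathscr{R}_{i\varepsilon(A)}$ (Definition \ref{defi-costars-rsets-inv-cat}), i.e. $aa^\circ=i\varepsilon(A)$ for every $a\in A$, which gives $i\varepsilon(A)\,a=aa^\circ a=a$, so $i\varepsilon(A)\,A=A$; combining with $A=sB$ yields $i\varepsilon(A)\,sB=i\varepsilon(A)\,A=A$. For the summand $s\,i\varepsilon(B)\,s^\circ A$ I would first use $i\varepsilon(B)\leqslant s^\circ s$ together with Proposition \ref{prop-inv-cat-ord} to get $i\varepsilon(B)\,s^\circ s=i\varepsilon(B)$, so that $s\,i\varepsilon(B)\,s^\circ A=s\,i\varepsilon(B)\,s^\circ s B=s\,i\varepsilon(B)\,B$; then, exactly as before, $B\subset\mathscr{R}_{i\varepsilon(B)}$ gives $i\varepsilon(B)\,B=B$, whence $s\,i\varepsilon(B)\,s^\circ A=sB=A$. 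Therefore $s\,i\varepsilon(B)\,s^\circ A\cup i\varepsilon(A)\,sB=A\cup A=A$, which finishes the argument.

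I do not expect a genuine obstacle here: the statement is essentially a bookkeeping consequence of Proposition \ref{propo-pseudoproduc-invcat}. The only point that deserves a bit of care is making explicit which defining conditions are in force --- namely that $A=sB$ simultaneously forces $i\varepsilon(B)\leqslant s^\circ s$ (so that the action $\mathfrak{B}_s$ is applicable to $B$) and supplies the two $\mathscr{R}$-class identities $i\varepsilon(A)\,A=A$ and $i\varepsilon(B)\,B=B$ --- after which the computation collapses immediately.
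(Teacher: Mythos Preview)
Your proposal is correct and follows essentially the same approach as the paper: both arguments unpack composability as $A=sB$ with $st$ defined, then plug into the formula of Proposition \ref{propo-pseudoproduc-invcat} and simplify each summand using the $\mathscr{R}$-class identities. The only cosmetic difference is that the paper first observes $i\varepsilon(A)=s\,i\varepsilon(B)\,s^\circ$ (since $A=sB\subset\mathscr{R}_{s\,i\varepsilon(B)\,s^\circ}$) and then reduces both summands to $i\varepsilon(A)A=A$ in one stroke, whereas you substitute $A=sB$ into $s\,i\varepsilon(B)\,s^\circ A$ and simplify via $i\varepsilon(B)\leqslant s^\circ s$; the underlying computation is the same.
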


\begin{proof}
	Suppose $A\subset \mathscr{R}_e$ and $B\subset \mathscr{R}_f$. By assumption $\exists (A,s)(B,t)=(A,st) \iff A=sB \text{  and  } \exists st.$ From the equality $A=sB$, and  since $sB\subset \mathscr{R}_{sfs^\circ}$, we conclude that $e=sfs^\circ$. Using this information in the formula of the pseudo product we arrive at  $(A,s)\star(B,t) = (sfs^\circ A \cup esB,st)= (eA,st)=(A,st),$ where the last passage is due to the fact that $A\subset \mathscr{R}_e$. This concludes the proof.  \end{proof}

\begin{proposition} \label{prop.szendrei.pseudo.prod}
     $(\overline{Sz}(\mathcal{C}), \star)$ is an inverse category. 
\end{proposition}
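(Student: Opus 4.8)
The plan is to make the inverse category structure that $\star$ determines on the arrows of $\overline{Sz}(\mathcal{C})$ completely explicit and then to verify the axioms of Proposition~\ref{prop-defi-equiv-invcat}(ii). The objects will be a copy of $\mathcal{C}^{(0)}$: an arrow $(A,s)$ with $A\in\pex{e}{X}$ and $s\colon U\to X$ is viewed as going from $U$ to $X$, and the identity at an object $X$ is the pair $(\emptyset,1_X)$, where $\emptyset$ is the empty subset regarded inside $P_{1_X,X}(\mathcal{C})$ (so $o\varepsilon(\emptyset)=X$, $i\varepsilon(\emptyset)=1_X$) --- this is exactly the role played by the empty set among the admissible $A$'s. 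By Proposition~\ref{propo-pseudoproduc-invcat}, $(A,s)\star(B,t)$ is defined precisely when $st$ is defined in $\mathcal{C}$, i.e. exactly when the target of $(B,t)$ equals the source of $(A,s)$, and produces again an arrow of $\overline{Sz}(\mathcal{C})$; the candidate inverse is $(A,s)^{\circ}:=(s^{\circ}A,s^{\circ})$, which lies in $\overline{Sz}(\mathcal{C})$ because $i\varepsilon(s^{\circ}A)=s^{\circ}\,i\varepsilon(A)\,s\leqslant s^{\circ}s$. Note also that, by Lemma~\ref{lemma-pseudoprod-equal-compositio}, $\star$ restricts to the composition of $\overline{Sz}(\mathcal{C})$ on all pairs for which the latter is defined.

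First I would check that this is a category. The unit laws $(\emptyset,1_X)\star(A,s)=(A,s)$ and $(A,s)\star(\emptyset,1_Y)=(A,s)$ come straight out of the formula of Proposition~\ref{propo-pseudoproduc-invcat}, using $1_XB=B$ for $B\subseteq Costar(X)$, $ss^{\circ}A=A$ when $i\varepsilon(A)\leqslant ss^{\circ}$, and that the empty set is absorbed by the set multiplications involved. The substantive point is associativity of $\star$. Rather than a triple expansion of the explicit formula, I would argue through the ordered structure: by Lemma~\ref{lemma-szendrei-exp-is-orderedcat} the Szendrei expansion is an ordered inverse category, and by Lemma~\ref{lemma-meets-idemp-invcat} together with the explicit meets of Lemma~\ref{lemma-Szendrei-wedge} its idempotents admit the meets that occur in the definition of $\star$; as in the proof of Proposition~\ref{propo-pseudoproduc-invcat} one rewrites $(A,s)\star(B,t)=\bigl({}_{(L,l)|}(A,s)\bigr)\bigl((B,t)_{|(L,l)}\bigr)$ as a genuine composition in $\overline{Sz}(\mathcal{C})$ with $(L,l)=id(A,s)\wedge ir(B,t)$, so that associativity of $\star$ follows from associativity of composition in $\overline{Sz}(\mathcal{C})$ combined with the compatibility of restrictions and corestrictions with composition (items (I)--(IV) of Definition~\ref{defi-ordered-invcat}). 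This is the classical argument that the pseudo product of an inductive groupoid is associative, transported to inverse categories; I expect it to be the main obstacle, the bookkeeping being to track which restriction idempotent controls each of the three factors of $(A,s)\star(B,t)\star(C,u)$.

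It remains to verify the inverse structure through Proposition~\ref{prop-defi-equiv-invcat}(ii). That $(\;)^{\circ}$ fixes objects is clear, and it is an involution since $\bigl((s^{\circ}A,s^{\circ})\bigr)^{\circ}=(ss^{\circ}A,s)=(A,s)$, because $ss^{\circ}A=A$. Contravariance, $\bigl((A,s)\star(B,t)\bigr)^{\circ}=(B,t)^{\circ}\star(A,s)^{\circ}$, follows either by a short computation with the formula or, once associativity is in hand, from uniqueness of inverses. The equation $(A,s)\star(A,s)^{\circ}\star(A,s)=(A,s)$ is a direct calculation that collapses, using $eA=A$ for $A\subseteq\mathscr{R}_e$ and $ss^{\circ}A=A$, to $(A,ss^{\circ})\star(A,s)=(A,s)$. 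Finally, the idempotents commute: the $\star$-idempotents are exactly the idempotent arrows $(E,i)$ of $\overline{Sz}(\mathcal{C})$, and for two such with $ij$ defined in $\mathcal{C}$ the formula of Proposition~\ref{propo-pseudoproduc-invcat} gives $(E,i)\star(F,j)=(i\varepsilon(F)E\cup i\varepsilon(E)F,\ ij)$, which is precisely the wedge $(E,i)\wedge(F,j)$ of Lemma~\ref{lemma-Szendrei-wedge}; since the right-hand side is symmetric in $(E,i)$ and $(F,j)$ (union of sets, and $ij=ji$ for idempotents of $\mathcal{C}$), the idempotents commute. By Proposition~\ref{prop-defi-equiv-invcat} this shows that $(\overline{Sz}(\mathcal{C}),\star)$ is an inverse category.
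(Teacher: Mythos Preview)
Your proof and the paper's both verify the conditions of Proposition~\ref{prop-defi-equiv-invcat}(ii), and your treatment of the involution, of $(A,s)\star(A,s)^{\circ}\star(A,s)=(A,s)$, and of commuting idempotents matches the paper's (indeed your formula $(E,i)\star(F,j)=(i\varepsilon(F)E\cup i\varepsilon(E)F,ij)$ is precisely Lemma~\ref{lemma-Szendrei-wedge}). The genuine difference is associativity: the paper simply expands both bracketings of $(A,s)\star(B,t)\star(C,u)$ from the formula of Proposition~\ref{propo-pseudoproduc-invcat} and checks they coincide, whereas you appeal to the ordered-category structure and the classical ESN-style argument for pseudo products. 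Your route is more conceptual and certainly valid, but be aware that Definition~\ref{defi-ordered-invcat} does not explicitly list all the compatibilities you invoke (for instance how restriction interacts with composition); these follow from the axioms but must still be derived, so the bookkeeping is deferred rather than eliminated. On identities, the paper says only that ``the arrows $(A,1_X)$ are the identities''; your choice of $(\emptyset,1_X)$ with $\emptyset$ placed in $P_{1_X,X}(\mathcal{C})$ is exactly what is needed if the object set is to be a copy of $\mathcal{C}^{(0)}$ (matching the fact that $\star$ is defined iff $st$ is), but it presupposes that $P(\mathcal{C})$ is read as the \emph{disjoint} union of the $P_{e,X}$ so that a labelled empty set exists in each fibre---a harmless convention, though one the paper never spells out.
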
 
\begin{proof}
It can be shown that the product $((A,s) \star (B,t) ) \star (C,u)$ is defined if and only if 
$ (A,s) \star ((B,t) \star (C,u))$ is defined and, 
    using the formula obtained in Proposition \ref{propo-pseudoproduc-invcat} and the same notations from its proof, both products are equal to the element $$
    (s ( f (tgt^\circ))s^\circ )A \cup es ( (tgt^\circ) B \cup ft C), stu).
    $$

    The idempotents for the pseudo product are the idempotents for the composition in $\overline{Sz}(\mathcal{C})$. It follows by Lemma \ref{lemma-pseudoprod-equal-compositio} that the arrows $(A,1_X)$ are the identities of this category.
    
For $(A,s) \in \overline{Sz}(\mathcal{C})$ we have that $(A,s)^\circ = (s^\circ A, s^\circ)$.
It can be shown that the association $(A,s) \mapsto (s^\circ A, s^\circ)$ satisfies the equality $((A,s)\star (B,t))^\circ = (B,t)^\circ \star (A,s)^\circ $ whenever the pseudo product $(A,s)\star (B,t))$ is defined; moreover, idempotents commute with respect to the pseudo product: if $(E,i)$ and $(F,j)$ are idempotents  then 
\[
(E,i) \star (F,j) = (F,j) \star (E,i) = 
(ij (E \cup F), ij)
\]
By Proposition \ref{prop-defi-equiv-invcat} we conclude that $ \overline{Sz}(\mathcal{C})$ is an inverse category with respect to the pseudo product. 
    
\end{proof}

A motivation for the introduction of the pseudo product for a groupoid is the ESN Theorem for inductive groupoids, which shows that every inductive groupoid is an inverse semigroup with respect to the pseudo product. We can obtain a similar result by introducing the inner Szendrei expansion, where we fix an object $X$ of the inverse category $\mathcal{C}$ and  then to consider the arrows $s:X\rightarrow X$ in the Szendrei expansion $ (\overline{Sz}(\mathcal{C}), \star)$.

\begin{definition}\label{defi-inner-expasion}
	Fix $X$ an object of the inverse category $\mathcal{C}$, the \textit{inner Szendrei expansion} at $X$ is the set $$\overline{Sz}(\mathcal{C}(X)):= P(\mathcal{C}) \rtimes_{(\varepsilon,\mathfrak{B})} \mathcal{C}(X):=\{ (A,s)\in \overline{Sz}(\mathcal{C}); s: X\rightarrow X    \}.$$
\end{definition}

It is clear that $\overline{Sz}(\mathcal{C}(X))$ is a full inverse subcategory of $\overline{Sz}(\calC)$ for every object $X \in \calC$.

The composition of two arrows  $(A,s)$ and $(B,t)$ may not exist in $\overline{Sz}(\mathcal{C}(X))$, but $st$ always exists and therefore, by Proposition \ref{propo-pseudoproduc-invcat},  $(A,s) \star (B,t)$ is defined for each pair of elements of $\overline{Sz}(\mathcal{C}(X))$. 
The next result then follows from Proposition
\ref{prop.szendrei.pseudo.prod}.

\begin{corollary}
    \label{coro-inners-exp-sgi}
	If $\mathcal{C}$ is  an inverse category and  $X\in\mathcal{C}^{(0)}$, the inner Szendrei expansion $\overline{Sz}(\mathcal{C}(X))$ is an inverse semigroup with respect to the pseudo product.
\end{corollary}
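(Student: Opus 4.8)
The plan is to deduce this corollary directly from the two preceding results, with no new computation required. By Proposition \ref{prop.szendrei.pseudo.prod} we already know that $(\overline{Sz}(\mathcal{C}), \star)$ is an inverse category, so in particular the pseudo product is associative wherever defined, admits inverses, and the idempotents commute. The only thing that distinguishes an inverse semigroup from an inverse category in this framework is that the partial binary operation must be everywhere defined; thus the entire content of the corollary is that, when we restrict attention to $\overline{Sz}(\mathcal{C}(X))$, the pseudo product $(A,s)\star(B,t)$ is defined for \emph{every} pair of arrows.

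First I would recall that $\overline{Sz}(\mathcal{C}(X))$ consists exactly of the pairs $(A,s)$ with $s : X \to X$ in $\mathcal{C}$, and that this is a full inverse subcategory of $\overline{Sz}(\mathcal{C})$ (as already noted after Definition \ref{defi-inner-expasion}), so it is closed under the pseudo product, under $(\ )^\circ$, and contains the relevant identity $(A,1_X)$. Next, given any two arrows $(A,s), (B,t) \in \overline{Sz}(\mathcal{C}(X))$, both $s$ and $t$ are endomorphisms of $X$, so the composition $st$ is always defined in $\mathcal{C}$. By Proposition \ref{propo-pseudoproduc-invcat}, the pseudo product $(A,s)\star(B,t)$ is defined in $\overline{Sz}(\mathcal{C})$ if and only if $st$ is defined in $\mathcal{C}$; hence $(A,s)\star(B,t)$ is always defined, and by the explicit formula it again has source and target $X$, so it lies in $\overline{Sz}(\mathcal{C}(X))$. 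Therefore $\star$ restricts to a (total) binary operation on the set $\overline{Sz}(\mathcal{C}(X))$.

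It then remains only to transport the inverse-category axioms from $(\overline{Sz}(\mathcal{C}), \star)$ to the subset: associativity of $\star$ on $\overline{Sz}(\mathcal{C}(X))$ is inherited from associativity on $\overline{Sz}(\mathcal{C})$ (using that all the pseudo products involved are defined, again because all composites $st$, $tu$, $stu$ exist in $\mathcal{C}(X)$), and for each $(A,s)$ the element $(A,s)^\circ = (s^\circ A, s^\circ)$ lies in $\overline{Sz}(\mathcal{C}(X))$ and satisfies the defining equations $(A,s)\star(A,s)^\circ\star(A,s) = (A,s)$ and $(A,s)^\circ\star(A,s)\star(A,s)^\circ = (A,s)^\circ$, these being instances of the corresponding identities in $\overline{Sz}(\mathcal{C})$. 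Uniqueness of the inverse passes down as well, since $\overline{Sz}(\mathcal{C}(X))$ is a full subcategory of an inverse category. Thus $(\overline{Sz}(\mathcal{C}(X)), \star)$ is a semigroup in which every element has a unique relative inverse, i.e. an inverse semigroup.

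I do not anticipate a genuine obstacle here: the corollary is essentially a matter of observing that restricting to the endomorphisms of a single object $X$ removes the only source of partiality in the pseudo product, and then checking that the algebraic structure survives the restriction. The one point that deserves a sentence of care is closure: one must confirm from the formula in Proposition \ref{propo-pseudoproduc-invcat} that $(A,s)\star(B,t)$ really has the form $(C,st)$ with $st : X \to X$, so that the operation does not leave the subset — but this is immediate from the shape of that formula.
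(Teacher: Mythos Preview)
Your proposal is correct and matches the paper's argument exactly: the paper also observes that for $(A,s),(B,t)\in\overline{Sz}(\mathcal{C}(X))$ the composite $st$ always exists in $\mathcal{C}$, invokes Proposition~\ref{propo-pseudoproduc-invcat} to conclude that $(A,s)\star(B,t)$ is always defined, and then appeals to Proposition~\ref{prop.szendrei.pseudo.prod} for the inverse structure. Your write-up is somewhat more explicit about closure and the inheritance of the axioms, but the strategy is identical.
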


\begin{remark}
We want to shed light on the internal structure of $\overline{Sz}(\mathcal{C})$. Consider an idempotent $(E,e)\in\overline{Sz}(\mathcal{C})$ with $E\subset Costar(X)$ and, by definition,  $e^2=e:X\rightarrow X$. Take an arrow $(A,s)$ such that $id(A,s)=(E,e)=ir(A,s)$, that is,  $(s^\circ A, s^\circ s) =(E,e)=(A,ss^\circ)$, and of course  $s:X\rightarrow X$. With this computation, we have discovered that $(A,s)=(E,s)$ satisfies $s^\circ s=e=ss^\circ$, and it is an element of $\overline{Sz}(\mathcal{C}(X))$. 
The conclusion is that the set  $ \overline{Sz}(\mathcal{C})((E,e))$ of such arrows is a group inside the inner expansion $\overline{Sz}(\mathcal{C}(X))$.    
\end{remark}

\

We can carry over all the work we have been doing in this section to the other Bernoulli actions defined in the last section. We must introduce a new notation first.  

Let $(\rho,\theta): (\mathcal{C}, (\: \:)^{\circ})\curvearrowright (P,\leqslant)$ be a fibred action of an inverse category on a poset, then we define $P\overline{\rtimes}_{(\rho,\theta)}\mathcal{C}:= \{(x,s)\in P\times \mathcal{C}; o\rho(x)=or(s), \: i\rho(x)= ir(s) \}.$

In what follows, we list all the Bernoulli structures we can construct: the global Szendrei expansions $\overline{Sz}(\mathcal{C})$ and $\overline{Sz}(\mathcal{C(-)})$; the partial Szendrei expansions ${Sz}(\mathcal{C})$ and ${Sz}(\mathcal{C}(-))$; the strict global Szendrei expansions  $\overline{Sz}(\mathcal{C})_m$ and $\overline{Sz}(\mathcal{C}(-))_m$; the strict partial Szendrei expansions ${Sz}(\mathcal{C})_m$ and ${Sz}(\mathcal{C}(-))_m$. In this context, the notation $\mathcal{C}(-)$ denotes the action of selecting a specific object within the category $\mathcal{C}$ and subsequently constructing its Szendrei expansion.
We state the properties of these sets in the next theorem.

\begin{theorem}\label{propo-all-Szendrei-exp}
	Let $\mathcal{C}$ be an inverse category. Then 
	
	\begin{enumerate}[(i)]
		\item $Sz(\mathcal{C})$ is an ordered inverse subcategory of the ordered inverse category $\overline{Sz}(\mathcal{C})$; 
		
		\item $Sz(\mathcal{C})_m$ is an ordered inverse subcategory of the ordered inverse category $\overline{Sz}(\mathcal{C})_m$.
	\end{enumerate} 
	
	Furthermore, for each $X\in\mathcal{C}^{(0)}$
	
	\begin{enumerate}
		
		\item [(iii)] ${Sz}(\mathcal{C}(X))$ is a inverse monoid and a subset of the inverse semigroup $\overline{Sz}(\mathcal{C}(X))$;
		
		\item [(iv)] ${Sz}(\mathcal{C}(X))_m$ is a inverse monoid and a subset of the inverse semigroup $\overline{Sz}(\mathcal{C}(X))_m$.
		
	\end{enumerate}
\end{theorem}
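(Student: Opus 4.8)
The plan is to realize each of the four ``smaller'' expansions as a sub-collection of the corresponding ``larger'' one, cut out by the extra membership conditions that are built into the definitions of the domain sets of the partial and strict-partial Bernoulli actions, and then to verify that those conditions are stable under the operations involved. Concretely: an arrow $(A,s)$ of $\overline{Sz}(\mathcal{C})$ belongs to $Sz(\mathcal{C})$ exactly when $A\ni i\varepsilon(A)$ and $A\ni i\varepsilon(A)s$ (the conditions defining the range sets $D_s$ of $\mathfrak{b}$); an arrow $(A,s)$ of $\overline{Sz}(\mathcal{C})_m$ belongs to $Sz(\mathcal{C})_m$ exactly when $A\ni ss^\circ$ and $A\ni s$ (the conditions defining the sets $\overline{D}^m_s$ of $\mathfrak{sb}$); and $Sz(\mathcal{C}(X))$, resp. $Sz(\mathcal{C}(X))_m$, is the set of those arrows $(A,s)$ of $Sz(\mathcal{C})$, resp. $Sz(\mathcal{C})_m$, with $s:X\to X$. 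Since the underlying posets are sub-posets of $(P(\mathcal{C}),\leqslant)$ and the compositions are restrictions of those of $\overline{Sz}(\mathcal{C})$ and $\overline{Sz}(\mathcal{C})_m$, parts (i)--(ii) reduce to three routine verifications and parts (iii)--(iv) to two, once one has observed — by the same arguments used for $\overline{Sz}(\mathcal{C})$ in Lemmas \ref{lemma-invcat-sdp-is-invcat} and \ref{lemma-szendrei-exp-is-orderedcat} — that $\overline{Sz}(\mathcal{C})_m$ is itself an ordered inverse category.

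For (i) and (ii) the three checks are: closure under the inverse $(A,s)^\circ=(s^\circ A,s^\circ)$, closure under the composition $(A,s)(B,t)=(A,st)$ (which is defined when $A=sB$ and $st$ exists in $\mathcal{C}$), and closure of the restriction and corestriction operations. For the inverse one uses $i\varepsilon(s^\circ A)=s^\circ i\varepsilon(A)s$ together with the witnesses $i\varepsilon(A)\in A$ and $i\varepsilon(A)s\in A$ (resp. $ss^\circ\in A$, $s\in A$) to produce the required elements of $s^\circ A$; each case is a two-line idempotent computation using $e\leqslant ir(s)\Rightarrow ess^\circ=e$ from Proposition \ref{prop-inv-cat-ord}. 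For composition, $A=sB$ gives $i\varepsilon(A)=s\,i\varepsilon(B)\,s^\circ$, so $A\ni i\varepsilon(A)$ is inherited from $(A,s)$, while $A\ni i\varepsilon(A)(st)$ follows by rewriting $i\varepsilon(A)st=s\,i\varepsilon(B)\,t$ (using $i\varepsilon(B)\leqslant id(s)$ and $i\varepsilon(B)\leqslant ir(t)$) and then $i\varepsilon(B)t\in B$; the strict case is shorter still, since $st=ss^\circ st=s\,t\in sB=A$ and $ss^\circ\in A$ is given. For the restriction and corestriction, Lemma \ref{lemma-szendrei-exp-is-orderedcat} provides ${}_{(E,f)|}(A,s)=(sE,sf)$ and $(A,s)_{|(E,f)}=(E,fs)$, and a short check shows these still satisfy the defining membership conditions when $(A,s)$ and $(E,f)$ do; uniqueness is inherited from the ambient expansion. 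Together with Definition \ref{defi-ordered-invcat}(I)--(II), which pass to sub-posets, this yields that $Sz(\mathcal{C})$ and $Sz(\mathcal{C})_m$ are ordered inverse subcategories.

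For (iii) and (iv) I would invoke Corollary \ref{coro-inners-exp-sgi}: $\overline{Sz}(\mathcal{C}(X))$ and $\overline{Sz}(\mathcal{C}(X))_m$ are inverse semigroups under the pseudo product $\star$, so it suffices to see that $Sz(\mathcal{C}(X))$ and $Sz(\mathcal{C}(X))_m$ are closed under $\star$ and under $(\;)^\circ$, and contain a two-sided identity. Closure under $\star$ uses the explicit formula $(A,s)\star(B,t)=(s\,i\varepsilon(B)\,s^\circ A\cup i\varepsilon(A)\,sB,\ st)$ of Proposition \ref{propo-pseudoproduc-invcat}: since $s,t:X\to X$ the composite $st$ always exists, and checking that the first coordinate still contains its own $i\varepsilon$ and the relevant slice amounts to substituting the witnesses $i\varepsilon(A)\in A$, $i\varepsilon(A)s\in A$, $i\varepsilon(B)t\in B$ (resp. $ss^\circ,s\in A$ and $tt^\circ,t\in B$) and simplifying with the fact that $E(\mathcal{C}(X))$ is a meet semilattice (Lemma \ref{lemma-meets-idemp-invcat}), so idempotents at $X$ commute. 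Finally $(\{1_X\},1_X)$ lies in $Sz(\mathcal{C}(X))$ and in $Sz(\mathcal{C}(X))_m$, and the same formula gives $(\{1_X\},1_X)\star(B,t)=(B\cup\{i\varepsilon(B)\},t)$ and $(B,t)\star(\{1_X\},1_X)=(tt^\circ B\cup\{i\varepsilon(B)t\},t)$; here $tt^\circ B=B$ because $i\varepsilon(B)\leqslant ir(t)$, and the defining conditions $i\varepsilon(B)\in B$, $i\varepsilon(B)t\in B$ are precisely what collapse both products to $(B,t)$, so $(\{1_X\},1_X)$ is the identity, upgrading ``inverse sub-semigroup'' to ``inverse monoid''.

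The main obstacle is bookkeeping rather than conceptual: in the composition and pseudo-product computations one must keep track of which idempotent sits over which object, repeatedly apply the order characterizations of Proposition \ref{prop-inv-cat-ord} and the commuting of idempotents at a fixed object, and not confuse the two pieces $s\,i\varepsilon(B)\,s^\circ A$ and $i\varepsilon(A)\,sB$ that make up the pseudo product. A second point to pin down at the outset is the precise meaning of the semidirect product of a partial (resp. strict-partial) Bernoulli action — namely that it is exactly the sub-collection of the global semidirect product described above — since the body of the paper defines semidirect products only for fibred actions; once this identification is made, everything reduces to the elementary checks sketched here.
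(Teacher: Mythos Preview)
Your proposal is correct and follows essentially the same route as the paper, only with far more detail. The paper's proof is extremely terse: for (i)--(ii) it simply observes that $P_\circ(\mathcal{C})$ inherits the order from $P(\mathcal{C})$, invokes Definition~\ref{defi-invcat-semidirect-prodc} to get the inverse-category structure, and waves at the arguments of Lemma~\ref{lemma-szendrei-exp-is-orderedcat} for the ordered structure; for (iii)--(iv) it cites Corollary~\ref{coro-inners-exp-sgi} and then checks, exactly as you do, that $(\{1_X\},1_X)$ is a two-sided identity via the pseudo-product formula. Your explicit closure checks (under $(\ )^\circ$, composition, restriction/corestriction, and $\star$) are precisely what the paper's references to earlier lemmas are meant to cover, and your final remark about pinning down the semidirect product of a \emph{partial} action is well taken --- the paper glosses over this by tacitly treating the partial Bernoulli action as a fibred action on $P_\circ(\mathcal{C})$.
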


\begin{proof}
	We start by pointing that, as $P_\circ(\mathcal{C})$ is a subset of $P(\mathcal{C})$, it becomes a poset with the induced order. Also, Definition's \ref{defi-invcat-semidirect-prodc} data, grants the inverse categorical structure of the expansions on items (i) and (ii). Combining both facts and the arguments used to prove Lemma \ref{lemma-szendrei-exp-is-orderedcat}, we have the claims of the first two items.
	
	For item (iii), the claim that ${Sz}(\mathcal{C}(X))$ is an inverse semigroup has the same proof as Corollary \ref{coro-inners-exp-sgi}. It only remains to be proved that it is also an inverse monoid. 
	
	Let $(A,s)\in Sz(\mathcal{C}(X))$, by definition $A\subset Costar(X), \: A\subset \mathscr{R}_{ses^\circ},\: s:X\rightarrow X  $ and $A\ni ses^\circ, se$. Since $s\in \mathcal{C}(X,X)$, it is possible to compute $s1_X=s=1_Xs$. Moreover, clearly $(\{1_X\},1_X)\in Sz(\mathcal{C}(X))$. So there exists $(A,s)\star(\{1_X\},1_X)$ and $(\{1_X\},1_X)\star(A,s)$, and both are equal to $(A,s)$.	
	Since the strict partial case is analogous, we have finished the proof.	
\end{proof}


\subsection{Enlargements} Our purpose in this section is to develop a notion of enlargement for inverse categories so that it includes the previous definitions of enlargement for ordered groupoids and for inverse semigroups \cite{lawsonlivro}.

\begin{definition}\label{defi-invcat-enalrgements}
	Let $\mathcal{C}$ be an ordered inverse subcategory of the ordered inverse category $\mathcal{D}$. We say that $\mathcal{D}$ is an \textit{enlargement} of $\mathcal{C}$, if
	
	\begin{enumerate}[(I)]
		\item for each $X\in\mathcal{C}^{(0)}$ the set $\ridx{C}{X}$ is an order ideal of $\ridx{D}{X}$;
		
		\item let $X,Y\in\mathcal{C}^{(0)}$, $e \in  \ridx{C}{X}$ and $f \in \ridx{C}{Y}$: if $(s:X\rightarrow Y) \in \mathcal{D}$, and $se=s=fs$, then we have that $s\in\mathcal{C}$;
		
		\item suppose $Y\in\mathcal{D}^{(0)}$ and $f \in \ridx{D}{Y}$: there exists $X\in\mathcal{C}^{(0)}$, $e \in \ridx{C}{X}$ and $s\in\mathcal{D}$ with $s:X\rightarrow Y$ satisfying $s^\circ s=e$ and $ss^\circ=f$. 
	\end{enumerate}
	
	Notation: $\mathcal{C}\subseteq_E \mathcal{D}$.
\end{definition}

\begin{remark}
If $\calC$ and $\calD$ are ordered by the natural order then (I) is a consequence of (II) in Definition \ref{defi-invcat-enalrgements}. In fact, let $X\in\mathcal{C}^{(0)}$, let $e \in \ridx{C}{X}$ and let $f \in \ridx{D}{X}$ be an idempotent such that $e \leq f$, that is, $ef = f = fe$. Then it follows immediately by (II) that $f \in \ridx{C}{X} $.
\end{remark}

The next result will shed light on some categorical aspects of enlargements.

\begin{proposition}\label{propo-invcat-enlarg-implies-cauchyenlarg}
	Suppose $\mathcal{C}$ and $\mathcal{D}$ are inverse categories satisfying $\mathcal{C}\subseteq_E\mathcal{D}$. Then the inclusion functor of their Cauchy completions, $\widehat{\mathcal{C}}\hookrightarrow\widehat{\mathcal{D}}$, is an equivalence.
\end{proposition}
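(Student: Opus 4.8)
The plan is to exhibit the inclusion $\widehat{\mathcal{C}}\hookrightarrow\widehat{\mathcal{D}}$ as a functor that is fully faithful and essentially surjective, so that it is an equivalence of categories. Fullness and faithfulness should be local (object-by-object) consequences of condition (II) in Definition \ref{defi-invcat-enalrgements}, while essential surjectivity should come from condition (III).

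\emph{Faithfulness and fullness.} First I would recall the description of $\widehat{\mathcal{C}}$: objects are pairs $(X,e)$ with $e \in \ridx{C}{X}$, and morphisms $(X,e)\to(Y,f)$ are triples $(e,s,f)$ with $s \in \mathcal{C}(X,Y)$ satisfying $se=s=fs$. Faithfulness of the inclusion is immediate, since a morphism $(e,s,f)$ of $\widehat{\mathcal{C}}$ is sent to the same triple in $\widehat{\mathcal{D}}$, and two triples are equal precisely when their middle arrows are equal in $\mathcal{C}\subseteq\mathcal{D}$. For fullness, take objects $(X,e),(Y,f)$ of $\widehat{\mathcal{C}}$ (hence $e \in \ridx{C}{X}$, $f \in \ridx{C}{Y}$) and a morphism $(e,s,f):(X,e)\to(Y,f)$ in $\widehat{\mathcal{D}}$; this means $s \in \mathcal{D}(X,Y)$ with $se=s=fs$. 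I claim that condition (II) applies directly: $e,f$ are idempotents of $\mathcal{C}$ at objects of $\mathcal{C}$, $s:X\to Y$ is an arrow of $\mathcal{D}$, and $se=s=fs$, so $s\in\mathcal{C}$; hence $(e,s,f)$ lies in $\widehat{\mathcal{C}}$ and the inclusion is full.

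\emph{Essential surjectivity.} Let $(Y,f)$ be an arbitrary object of $\widehat{\mathcal{D}}$, so $Y\in\mathcal{D}^{(0)}$ and $f\in\ridx{D}{Y}$. By condition (III) there exist $X\in\mathcal{C}^{(0)}$, $e\in\ridx{C}{X}$ and $s\in\mathcal{D}(X,Y)$ with $s^\circ s = e$ and $ss^\circ = f$. Then $(X,e)$ is an object of $\widehat{\mathcal{C}}$, and by the criterion recalled in the excerpt (right after the construction of $\widehat{\mathcal{C}}$, for inverse categories) the arrow $s$ witnesses that $(X,e)\cong(Y,f)$ in $\widehat{\mathcal{D}}$. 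Hence every object of $\widehat{\mathcal{D}}$ is isomorphic to one coming from $\widehat{\mathcal{C}}$, establishing essential surjectivity. Combining the three properties, the inclusion $\widehat{\mathcal{C}}\hookrightarrow\widehat{\mathcal{D}}$ is an equivalence of categories.

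\emph{Where the work really is.} The only delicate point is being careful about the role of condition (I). In the faithfulness/fullness argument above I only used that the source and target objects $(X,e),(Y,f)$ come from $\mathcal{C}$, and condition (II) handled the rest; condition (I) (that $\ridx{C}{X}$ is an order ideal in $\ridx{D}{X}$) is what guarantees that the construction is well-behaved under restrictions of idempotents and, in the general ordered setting, that $\widehat{\mathcal{C}}$ sits inside $\widehat{\mathcal{D}}$ as a genuine full subcategory on a set of objects closed under the relevant structure — but it is not needed for the equivalence itself once fullness is in hand. So I expect no real obstacle here; the proof is essentially a bookkeeping exercise translating the three enlargement axioms into the three conditions (faithful, full, essentially surjective) characterizing an equivalence, with the small subtlety of invoking the isomorphism criterion for objects of the Cauchy completion of an inverse category at the essential-surjectivity step.
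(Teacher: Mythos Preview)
Your proposal is correct and follows essentially the same approach as the paper: prove the inclusion $\widehat{\mathcal{C}}\hookrightarrow\widehat{\mathcal{D}}$ is faithful (trivially), full (via axiom (II)), and essentially surjective (via axiom (III) and the isomorphism criterion for objects in the Cauchy completion of an inverse category). Your observation that axiom (I) is not actually needed for the equivalence argument is also in line with the paper, which notes elsewhere that (I) follows from (II) under the natural order.
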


\begin{proof}
By construction, since $\calC$ is a subcategory of $\calD$ it follows that $\widehat{\mathcal{C}}$ is a subcategory of $\widehat{\mathcal{D}}$. We shall prove that the inclusion functor $\inc : \widehat{\mathcal{C}}\rightarrow\widehat{\mathcal{D}}$ is fully faithful and essentially surjective on objects.

It obviously is faithful.
Let $(X,e), (Y,f) \in \widehat{\calC}^{(0)}$ and let $(e,s,f): (X,e) \to (Y,f)$ be a morphism in  $\widehat{\mathcal{D}}$. By definition, $s : X \to Y$ in $\calD$ and  $se=s=fs$. It then follows from  Definition \ref{defi-invcat-enalrgements}  that $s \in \calC$, and therefore $(e,s,f) \in \widehat{\mathcal{C}}$, which proves that $\inc :\widehat{\mathcal{C}}\rightarrow\widehat{\mathcal{D}}$ is a full functor. 

Finally, let $(Y,f)$ be an object of $\widehat{\mathcal{D}}^{(0)}$, that is, $Y \in \calD^{(0)}$ and   $f \in \ridx{D}{Y}$. By Definition \ref{defi-invcat-enalrgements} there exist $X\in\calC$,  $e \in \ridx{C}{X}$ and $s:X\rightarrow Y$ satisfying $e=s^\circ s$ and $f=ss^\circ$.
With this data, we define $(s^\circ s, s, ss^\circ): (X, s^\circ s)\rightarrow (Y,ss^\circ) = (Y,f)$. This arrow is an isomorphism in  $\widehat{\mathcal{D}}$ with inverse $(ss^\circ , s^\circ, s^\circ s): (Y, ss^\circ)\rightarrow (X, s^\circ s)$, since $(ss^\circ , s^\circ, s^\circ s)(s^\circ s, s, ss^\circ) = 1_{(X,s^\circ s)}$		and $(s^\circ s, s, ss^\circ) (ss^\circ , s^\circ, s^\circ s) =1_{(Y,ss^\circ)}$, and this proves that $\inc$ is essentially surjective. 	
	
	Hence, the Cauchy completions of the pair $(\calC, \calD)$ are equivalent categories. \end{proof}

Naturally, one may ask if the  Szendrei expansions may be organized in pairs such that one is an enlargement of the other, that is: Is $\overline{Sz}(\mathcal{C})$ an enlargement of $Sz({\mathcal{C}})$? Is $\overline{Sz}(\mathcal{C})_m$ an enlargement of $Sz(\mathcal{C})_m$? 
We will investigate these questions following the main ideas of Lawson's book (\cite{lawsonlivro},  Chapter 8, Theorem 4).

We begin with the pair $ \overline{Sz}(\mathcal{C})$ and $Sz({\mathcal{C}})$. 	By definition, $Sz(\mathcal{C})$ is an inverse subcategory of  $\overline{Sz}(\mathcal{C})$.
	
(I)  Let $(E,e)\in\overline{Sz}(\mathcal{C})$ and $(F,f)\in  Sz(\mathcal{C})$ be idempotent arrows satisfying $(E,e)\leqslant (F,f)$. We will show that $(E,e)\in Sz(\mathcal{C})$.

 	Note that, since $(E,e)$ is an idempotent arrow,  in order to conclude that $(E,e)\in Sz(\mathcal{C})$ it is enough to verify that $i\varepsilon(E) \in E$. Since $(E,e)\leqslant (F,f)$, we have that $i\varepsilon(E)F\subseteq E$, and since $i\varepsilon(F)\in F$ we obtain that $E\ni i\varepsilon(E)i\varepsilon(F)=i\varepsilon(E).$ Hence (I) holds.

(II) Let $(E,i), (F,j)$ be idempotents in $Sz(\calC)$ and let $(A,s):(E,1_X)\rightarrow (F,1_Y) $ be an arrow in $ \overline{Sz}(\mathcal{C})$ such that $(A,s)(E,i)=(A,s)=(F,j)(A,s).$ Let us fix some notation: 
	$E\subset Costar(X),\:  E\subset \mathscr{R}_e,\: e,i: X\rightarrow X,\:  e\leqslant i,\:  e\in E$,  and  $ F\subset Costar(Y),\: F\subset \mathscr{R}_f,\:  f,j: Y\rightarrow Y,\: f\leqslant j,\: f\in F.$ We can infer a few facts:

	\begin{itemize}
		\item as $(E,i)=id(A,s)=(s^\circ A, s^\circ s) $, then $E=s^\circ A, \: X=od(s) \text{  and  } i=s^\circ s = id(s);$
		
		\item similarly, since $(F,j)=ir(A,s)=(A,ss^\circ)$, we can see $F=A, \: Y= or(s) \text{  and  } j=ss^\circ =ir(s);$
	\end{itemize}

	These facts above imply that $$s \in \calC(X,Y), \: si=s=js,\: A\subset \mathscr{R}_f, \: A\ni f=fj,\: s^\circ A\subset \mathscr{R}_e, \: s^\circ A\ni e=ei   .$$
 Now  $(A,s)$ is an arrow in $Sz{(\mathcal{C})}$ if and only if $i \varepsilon(A), i \varepsilon(A) s \in A$, that is, $f, fs \in A$. 
 
 We have already checked that $f \in A$. In order to verify that $fs \in A$, we note that if $a \in A$ then $a^\circ a = f$, hence 
$$s^\circ f s = s^\circ a^\circ a s = i \varepsilon (s^\circ A) = i \varepsilon (E) = e \in E = s^\circ A; $$ since $A = f A = j f A = ss^\circ f A  =  s(s^\circ A) $ it follows  that $$A \ni s(s^\circ f s) = f ss^\circ s = fs $$
and therefore $(A,s) \in Sz{(\mathcal{C})}$. This shows that the second axiom of an enlargement of inverse categories holds for the extension $Sz{(\mathcal{C})} \subset \overline{Sz}{(\mathcal{C})}$.
 	
 (III)	Consider the idempotent $(F,j)\in\overline{Sz}(\mathcal{C})$, with $F\subset Costar(Y), \: F\subset \mathscr{R}_f \text{  and  } f\leqslant j.$  From Lemma \ref{lemma-global-partial-Bernoulli-proof}, $\mathcal{C}\cdot P_\circ (\mathcal{C})=P(\mathcal{C})$, so there exist an arrow $(s: X\rightarrow U)\in \mathcal{C}$ and an element $A\in P_\circ(\mathcal{C})$, such that $sA$ is defined and $sA=F.$ Notice that $Y=o\varepsilon(F)=o\varepsilon(sA)=or(s)=U$, hence $s\in \mathcal{D}(X,Y)$. Moreover $sA=F\subset \mathscr{R}_f \implies sA\subset \mathscr{R}_f.$ If $A\subset \mathscr{R}_p$, then $sA\subset \mathscr{R}_{sps^\circ}$ and $A\ni p$. We conclude that 
 $f=sps^\circ$. Note also that $sA$ is defined if and only if $A \in dom(\mathfrak{B}_s)$, that is, $X=od(s)=o\varepsilon(A)$ and $p=i\varepsilon(A)\leqslant id(s)=s^\circ s$.
	
	Another conclusion is that $p: X\rightarrow X$, because $or(p)=or(s^\circ s p)=or(s^\circ )=X \text{  and  } p^2=p.$

	Consider the pair $(F,sp)$.  It has following the properties: 
	\begin{itemize}
		\item $(F,sp)$ is an arrow of $\overline{Sz}(\mathcal{C})$, due to the fact that $ or(sp)=or(s)=Y=o\varepsilon(F)$ and $i\varepsilon(F)=f=sps^\circ \leqslant ir(sp);	$
		
		\item its inner source is $(A,p)$, because $ (sp)^\circ F=ps^\circ F=ps^\circ (sA)=pA=A$, and  $ od(sp)=od(p)=X$, and  $id(sp)=(sp)^\circ (sp)=ps^\circ sp=p;$
		
		\item also, its inner target is $(F,sps^\circ)$, since $ or(sp)=or(s)=Y$, and $ir(sp)= (sp)(sp)^\circ =sps^\circ.$
	\end{itemize}
	
	Summarizing, $(F,sp): A\rightarrow F$ is an arrow in $\overline{Sz}(\mathcal{C})$ such that $(F,sp)^\circ (F,sp)=(A,p) \text{  and  } (F,sp)(F,sp)^\circ =(F,f).$
	
	However, to conclude that the third axiom holds we need the equality $ (F,sp)(F,sp)^\circ= (F,j)$, that is, $ f=ir(sp)=j$, and we have only $f\leqslant j$.

\underline{Conclusion}: the pair $(Sz({\mathcal{C}}), \overline{Sz}(\mathcal{C}))$ satisfies the axioms (I) and (II), but the previous computations do not ensure that (III) holds. 

\

However, observe that they do show that (III) holds whenever the idempotent $(F,j)$ satisfies $j = i\varepsilon (F)$. So we have the next theorem.

\begin{theorem}\label{theo-restrict-exp-enlarg}
	The strict global expansion $\overline{Sz}(\mathcal{C})_m$ is an enlargement of the strict partial expansion $Sz(\mathcal{C})_m$. Moreover, the same relation holds for $\overline{Sz}(\mathcal{C}(X))_m$  and $Sz(\mathcal{C}(X))_m$.
\end{theorem}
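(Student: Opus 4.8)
\medskip

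The plan is to verify the three conditions of Definition~\ref{defi-invcat-enalrgements} for the pair $Sz(\mathcal{C})_m \subseteq \overline{Sz}(\mathcal{C})_m$, recycling the computations already carried out above for the non-strict pair $Sz(\mathcal{C}) \subseteq \overline{Sz}(\mathcal{C})$. By Theorem~\ref{propo-all-Szendrei-exp}(ii), $Sz(\mathcal{C})_m$ is an ordered inverse subcategory of $\overline{Sz}(\mathcal{C})_m$, so the setting of Definition~\ref{defi-invcat-enalrgements} applies. The decisive remark about the \emph{strict} expansions is the following: every idempotent arrow $(F,j)$ of $\overline{Sz}(\mathcal{C})_m$ (or of $Sz(\mathcal{C})_m$) satisfies $j = i\varepsilon(F)$. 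Indeed, an idempotent morphism obeys $j = j^\circ$, so $ir(j) = jj^\circ = j$, and the defining relation of the strict expansion forces $i\varepsilon(F) = ir(j)$. Equivalently, the idempotent arrows of $\overline{Sz}(\mathcal{C})_m$ are exactly its identity arrows (so $\overline{Sz}(\mathcal{C})_m$ is in fact a groupoid, which is why enlargement here specializes to enlargement of ordered groupoids), and likewise for $Sz(\mathcal{C})_m$.

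Condition~(I) is then immediate: for each object $A$ of $Sz(\mathcal{C})_m$ the sets $E(Sz(\mathcal{C})_m(A))$ and $E(\overline{Sz}(\mathcal{C})_m(A))$ both collapse to $\{1_A\}$, so the former is trivially an order ideal of the latter (alternatively, the verification given for the non-strict pair applies verbatim). For condition~(II), let $(A,s)$ be an arrow of $\overline{Sz}(\mathcal{C})_m$ with $(A,s)(E,i)=(A,s)=(F,j)(A,s)$ for idempotent arrows $(E,i),(F,j)$ of $Sz(\mathcal{C})_m$; since these are identity arrows one reads off $id(A,s)=(E,i)$ and $ir(A,s)=(F,j)$, whence $A=F$, $s^{\circ}A=E$, $ss^{\circ}=j=i\varepsilon(A)$ and $s^{\circ}s=i=i\varepsilon(E)$. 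As $(E,i)\in Sz(\mathcal{C})_m$ we have $s^{\circ}s=i\varepsilon(E)\in E=s^{\circ}A$, so $s^{\circ}s=s^{\circ}a$ for some $a\in A$; multiplying on the left by $s$ and using $aa^{\circ}=i\varepsilon(A)=ss^{\circ}$ gives $a=s$, hence $s\in A$. Together with $ss^{\circ}=i\varepsilon(A)\in A$ and $A=F\in P_{\circ}(\mathcal{C})$ this yields $(A,s)\in Sz(\mathcal{C})_m$.

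The substance is condition~(III), and this is where strictness is essential. Given an idempotent arrow $(F,j)$ of $\overline{Sz}(\mathcal{C})_m$, the construction carried out above for the non-strict pair --- which only uses $\mathcal{C}\cdot P_{\circ}(\mathcal{C})=P(\mathcal{C})$ from Lemma~\ref{lemma-global-partial-Bernoulli-proof} --- produces $A\in P_{\circ}(\mathcal{C})$, an idempotent $p=i\varepsilon(A)$ and a morphism $s$ of $\mathcal{C}$ with $sA=F$, $p\leqslant id(s)$ and $i\varepsilon(F)=sps^{\circ}$, together with an arrow $(F,sp)\colon A\to F$ satisfying $(F,sp)^{\circ}(F,sp)=(A,p)$ and $(F,sp)(F,sp)^{\circ}=(F,sps^{\circ})=(F,i\varepsilon(F))$. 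Now $(A,p)$ is an identity arrow, hence an idempotent of $Sz(\mathcal{C})_m$, and since $ir(sp)=sps^{\circ}=i\varepsilon(F)$ the pair $(F,sp)$ is a genuine arrow of the strict expansion $\overline{Sz}(\mathcal{C})_m$. The sole gap in the non-strict argument was that the target $i\varepsilon(F)$ of $(F,sp)(F,sp)^{\circ}$ need not coincide with the prescribed $j$; but by the opening remark $j=i\varepsilon(F)$, so $(F,sp)(F,sp)^{\circ}=(F,j)$ on the nose and condition~(III) holds. This establishes $Sz(\mathcal{C})_m\subseteq_E\overline{Sz}(\mathcal{C})_m$, and the inner statement is obtained by rerunning the same three verifications with every morphism constrained to have the form $s\colon X\to X$. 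The step I expect to require the most care is checking, inside (II) and (III), that the elements produced actually meet the membership conditions that cut $Sz(\mathcal{C})_m$ out of $\overline{Sz}(\mathcal{C})_m$ --- but these are precisely the checks already performed in the non-strict analysis, now supplemented by the identity $j=i\varepsilon(F)$.
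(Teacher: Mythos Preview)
Your proof is correct and follows essentially the same route as the paper: you reuse the verifications of (I) and (II) carried out just above for the non-strict pair, and then observe that the only obstruction in (III) --- namely that the constructed arrow $(F,sp)$ satisfies $(F,sp)(F,sp)^\circ = (F,i\varepsilon(F))$ rather than $(F,j)$ --- disappears in the strict setting because every idempotent $(F,j)$ there has $j = i\varepsilon(F)$. This is precisely the paper's argument, which is nothing more than the sentence preceding the theorem.

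Two minor remarks. First, your direct argument for (II) (showing $s\in A$ via $s^\circ s = s^\circ a$ for some $a\in A$ and then cancelling using $aa^\circ = ss^\circ$) is a slight streamlining of the paper's computation, made possible by the strict identity $i\varepsilon(A)=ss^\circ$; the paper instead reuses its non-strict computation verbatim. Second, your parenthetical claim that $\overline{Sz}(\mathcal{C})_m$ is a groupoid (idempotents $=$ identities) is morally right but rests on how the objects of the strict semidirect product are set up, which the paper does not make fully explicit; since you also offer the safe alternative ``the verification given for the non-strict pair applies verbatim'' for (I), and since the equalities $E=s^\circ A$, $F=A$, $i=s^\circ s$, $j=ss^\circ$ that you use in (II) follow directly from the strict condition together with the source/target identifications (independently of the groupoid remark), nothing in your argument actually depends on that claim.
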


\section{Convolution algebras and representation}\label{section-algebras}

\subsection{Morita equivalence of convolution algebras}The last theoretical tool that we need is to define Morita equivalences for categories and to understand how it carries over to its convolution algebras. We will follow the definition of Borceux \cite{borceux2} of Morita equivalence.

\begin{definition}\label{defi-Morita-cats}\cite{borceux-handbook-I}
	Two small categories $\mathcal{C}$ and $\mathcal{D}$ are called \textit{Morita equivalent} if their Cauchy completions $\widehat{\mathcal{C}}$ and $\widehat{\mathcal{D}}$ are equivalent. 	Notation: $\mathcal{C}\simeq_{M}\mathcal{D}$.
\end{definition}

Next we state a proposition of Xu \cite{xu-representations} which connects Morita equivalent categories and Morita equivalent convolution algebras. 

\begin{proposition}\label{propo-Morita-conv-alg}\cite{xu-representations}
	Let  $\mathcal{C}$ and $\mathcal{D}$ be two small categories, and $\mathbb{K}$ be a commutative unital ring. If $\mathcal{C}^{(0)}$ and $\mathcal{D}^{(0)}$ are finite and $\mathcal{C}$ and $\mathcal{D}$ are Morita equivalent as categories then the convolution category algebras $\mathbb{K}\mathcal{C}$ and $\mathbb{K}\mathcal{D}$ are Morita equivalent as algebras.
\end{proposition}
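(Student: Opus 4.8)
The plan is to translate the statement into an equivalence of representation categories and then apply the classical Morita theorem for unital rings. Write $\K\text{-Mod}$ for the (additive, idempotent complete) category of left $\K$-modules, and $\mathrm{Rep}(\calC) = \mathrm{Fun}(\calC, \K\text{-Mod})$ for the category of representations of a small category $\calC$. The first step is the standard dictionary between modules and representations: because $\calC^{(0)}$ is finite, $\K\calC$ is unital with $1_{\K\calC} = \sum_{e \in \calC^{(0)}} 1_e$, and the assignments $M \mapsto (e \mapsto 1_e M)$ and $F \mapsto \bigoplus_{e \in \calC^{(0)}} F(e)$ are mutually inverse $\K$-linear equivalences $\K\calC\text{-Mod} \simeq \mathrm{Rep}(\calC)$. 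It is precisely the finiteness of $\calC^{(0)}$ (not of the whole category) that makes $\bigoplus_e F(e)$ a unital $\K\calC$-module; the same holds for $\calD$.

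The second step uses the universal property of the Cauchy completion. For any idempotent complete category $\mathcal{A}$, restriction along the full inclusion $\calC \hookrightarrow \widehat{\calC}$ of Proposition \ref{propo-cauchy-compl-properties}(ii) is an equivalence $\mathrm{Fun}(\widehat{\calC}, \mathcal{A}) \to \mathrm{Fun}(\calC, \mathcal{A})$, since a functor on $\calC$ extends essentially uniquely across the formally split idempotents. Taking $\mathcal{A} = \K\text{-Mod}$ gives $\mathrm{Rep}(\calC) \simeq \mathrm{Fun}(\widehat{\calC}, \K\text{-Mod})$ and, symmetrically, $\mathrm{Rep}(\calD) \simeq \mathrm{Fun}(\widehat{\calD}, \K\text{-Mod})$. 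By Definition \ref{defi-Morita-cats}, the hypothesis is exactly that $\widehat{\calC}$ and $\widehat{\calD}$ are equivalent categories; composing with the equivalence $\mathrm{Fun}(\widehat{\calC}, \K\text{-Mod}) \simeq \mathrm{Fun}(\widehat{\calD}, \K\text{-Mod})$ induced by such an equivalence, and with the dictionary of the first step, produces a chain of additive equivalences
\[
\K\calC\text{-Mod} \ \simeq \ \mathrm{Rep}(\calC) \ \simeq \ \mathrm{Fun}(\widehat{\calC}, \K\text{-Mod}) \ \simeq \ \mathrm{Fun}(\widehat{\calD}, \K\text{-Mod}) \ \simeq \ \mathrm{Rep}(\calD) \ \simeq \ \K\calD\text{-Mod}.
\]

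To finish I would invoke the Morita theorem: if two unital rings have equivalent categories of left modules, then they are Morita equivalent, the equivalence being realized by tensoring with an invertible bimodule. Since $\K\calC$ and $\K\calD$ are unital $\K$-algebras (here $\calC^{(0)}$, $\calD^{(0)}$ finite is used again) and the composite constructed above is an equivalence of their module categories, we conclude $\K\calC \simeq_M \K\calD$ as algebras.

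I do not expect a serious obstacle: all three steps are standard, and the work is in the bookkeeping — verifying that the module/representation equivalence needs only $|\calC^{(0)}| < \infty$, and that every equivalence in the chain is additive, so that the Morita theorem applies without modification. If one preferred a self-contained argument avoiding the abstract Cauchy-completion lemma, one could instead produce an explicit $(\K\calD, \K\calC)$-bimodule implementing the equivalence $\K\calC\text{-Mod} \simeq \K\calD\text{-Mod}$, but the route above is shorter and isolates cleanly where each hypothesis is used.
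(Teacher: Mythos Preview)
The paper does not give its own proof of this proposition; it merely cites the result from Xu \cite{xu-representations} and moves on. Your argument is the standard one and is correct: the dictionary $\K\calC\text{-Mod}\simeq \mathrm{Fun}(\calC,\K\text{-Mod})$ for categories with finitely many objects, the universal property of the Cauchy completion with respect to idempotent-complete targets, and the classical Morita theorem for unital rings combine exactly as you describe. This is essentially the route Xu takes as well, so there is nothing to compare against in the present paper beyond noting that your sketch supplies what the authors chose to outsource.
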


We can provide a sufficient condition for the Morita equivalence of two convolution algebras, using the concept of enlargements for inverse categories. Let us explain it better: we showed earlier, in  Proposition \ref{propo-invcat-enlarg-implies-cauchyenlarg}, that if an inverse category is an enlargement of another inverse category then their Cauchy completions are equivalent. In particular, the associated restriction groupoids must be Morita equivalent. 

We can state the following theorem.

\begin{theorem}\label{theo-enlarg-imp-morita-alg}
	Let $\mathcal{C}$ and $\mathcal{D}$ be two finite inverse categories and let $\mathbb{K}$ be a commutative unital ring.  If $\mathcal{D}$ is an enlargement of $\mathcal{C}$, then the convolution algebras $\mathbb{K}\mathcal{C}$ and $\mathbb{K}\mathcal{D}$ are Morita equivalent.
\end{theorem}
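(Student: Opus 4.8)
The plan is to chain together the results already established in the excerpt. The key observation is that the hypothesis $\mathcal{D}$ is an enlargement of $\mathcal{C}$ (i.e. $\mathcal{C}\subseteq_E\mathcal{D}$) feeds directly into Proposition \ref{propo-invcat-enlarg-implies-cauchyenlarg}, which tells us that the inclusion functor $\widehat{\mathcal{C}}\hookrightarrow\widehat{\mathcal{D}}$ of Cauchy completions is an equivalence of categories. By Definition \ref{defi-Morita-cats}, this is exactly what it means for $\mathcal{C}$ and $\mathcal{D}$ to be Morita equivalent as categories.

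Next, I would invoke Proposition \ref{propo-Morita-conv-alg} of Xu: if two small categories with finite object sets are Morita equivalent as categories, then their convolution algebras are Morita equivalent as $\mathbb{K}$-algebras. The hypotheses match: $\mathcal{C}$ and $\mathcal{D}$ are finite inverse categories (so in particular $\mathcal{C}^{(0)}$ and $\mathcal{D}^{(0)}$ are finite), $\mathbb{K}$ is a commutative unital ring, and we have just verified that $\mathcal{C}\simeq_M\mathcal{D}$. Applying the proposition yields that $\mathbb{K}\mathcal{C}$ and $\mathbb{K}\mathcal{D}$ are Morita equivalent as algebras, which is the conclusion.

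The only point that requires a word of care is the passage between ``Cauchy completions are equivalent'' and the definition of Morita equivalence of categories: Proposition \ref{propo-invcat-enlarg-implies-cauchyenlarg} asserts that the specific inclusion functor is an equivalence, and this is slightly stronger than (hence certainly implies) the existence of some equivalence $\widehat{\mathcal{C}}\simeq\widehat{\mathcal{D}}$ demanded by Definition \ref{defi-Morita-cats}. So there is essentially no obstacle here; the theorem is a short composition of two previously proven facts, and the ``hard work'' has already been done in establishing Proposition \ref{propo-invcat-enlarg-implies-cauchyenlarg}. If one wanted to make the statement self-contained one could also remark that, via Theorem \ref{theo-Linkcelmann-iso}, the convolution algebras can be replaced by the algebras of the associated restriction groupoids $\mathbb{K}\mathcal{G}_{\mathcal{C}}$ and $\mathbb{K}\mathcal{G}_{\mathcal{D}}$, whose Morita equivalence can be read off directly from the equivalence of $\widehat{\mathcal{C}}$ and $\widehat{\mathcal{D}}$ (an equivalence of categories restricts to an equivalence of their maximal subgroupoids of isomorphisms); but the route through Proposition \ref{propo-Morita-conv-alg} is the most economical.
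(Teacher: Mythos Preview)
Your proposal is correct and follows essentially the same skeleton as the paper: enlargement $\Rightarrow$ equivalence of Cauchy completions (Proposition~\ref{propo-invcat-enlarg-implies-cauchyenlarg}) $\Rightarrow$ Morita equivalence of categories $\Rightarrow$ Morita equivalence of convolution algebras. The only cosmetic difference is emphasis: the paper routes the final implication through Linckelmann's isomorphism (Theorem~\ref{theo-Linkcelmann-iso}) and the idempotent-completeness of $\widehat{\mathcal{C}}$, whereas you invoke Xu's Proposition~\ref{propo-Morita-conv-alg} directly and mention the Linckelmann route only as an aside---both arrive at the same place.
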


\begin{proof}
	By Proposition \ref{prop-equiv-cauchy-inv}, if $\mathcal{C}$ and $\mathcal{D}$ are categories then they are  inverse categories if and only if their Cauchy completions $\widehat{\mathcal{C}} \text{  and  } \widehat{\mathcal{D}}$   are inverse categories.
	
	Due to Linckelmann's isomorphism (Theorem \ref{theo-Linkcelmann-iso}) and the fact that the Cauchy completion is equivalent to its own Cauchy completion (cf. Proposition \ref{propo-cauchy-compl-properties} ), we have that  $\mathbb{K}\widehat{\mathcal{C}}\simeq\mathbb{K}\mathcal{G}_{\mathcal{C} } \text{  and  } \mathbb{K}\widehat{\mathcal{D}}\simeq\mathbb{K}\mathcal{G}_{\mathcal{D} },$ so $\mathcal{C}\simeq_{M}\mathcal{D}$ implies that  $\widehat{\mathcal{C}}\simeq_{M}\widehat{\mathcal{D}}$  therefore $\mathbb{K}{\mathcal{C}}\simeq_{M}\mathbb{K}{\mathcal{D}}$.
\end{proof}

Finally, we can apply it to our case. Nevertheless, first, we will denominate the algebras.

Given a commutative and unital ring $\mathbb{K}$, each Szendrei expansion will give origin to an algebra, which we  now define:
the strict global outer and inner algebras $\mathbb{K}\overline{Sz}(\mathcal{C})_m = \mathbb{K}_{sglob}\mathcal{C}$ and $\mathbb{K}\overline{Sz}(\mathcal{C}(-))_m = \mathbb{K}_{sglob}\mathcal{C}(-)$;
  the strict partial outer and inner algebras  $\mathbb{K}{Sz}(\mathcal{C})_m =\mathbb{K}_{spar}\mathcal{C}$ and $\mathbb{K}{Sz}(\mathcal{C}(-))_m =\mathbb{K}_{spar}\mathcal{C}(-)$.

\begin{corollary}\label{coro-szendrei-alge-morita}
	Given a commutative and unital ring $\mathbb{K}$ and a finite inverse category $\mathcal{C}$, the algebras $\mathbb{K}_{sglob}\mathcal{C}$ and  $\mathbb{K}_{spar}\mathcal{C}$  are Morita equivalent. Futhermore, the algebras $\mathbb{K}_{sglob}\mathcal{C}(X)$ and  $\mathbb{K}_{spar}\mathcal{C}(X)$  are Morita equivalent for each object $X$ in $\mathcal{C}$.
 \end{corollary}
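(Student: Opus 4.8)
The plan is to read the corollary off the two structural results already proved. By Theorem~\ref{theo-restrict-exp-enlarg} the strict global expansion $\overline{Sz}(\mathcal{C})_m$ is an enlargement of the strict partial expansion $Sz(\mathcal{C})_m$, and likewise $\overline{Sz}(\mathcal{C}(X))_m$ is an enlargement of $Sz(\mathcal{C}(X))_m$ for every $X\in\mathcal{C}^{(0)}$; by Theorem~\ref{theo-enlarg-imp-morita-alg}, an enlargement of \emph{finite} inverse categories induces a Morita equivalence of the associated convolution algebras. So the only thing that actually needs checking is that, when $\mathcal{C}$ is finite, all four expansions in sight are finite.

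For this I would argue as follows. Since $\mathcal{C}$ has finitely many arrows, for each $X\in\mathcal{C}^{(0)}$ and each $e\in\ridx{C}{X}$ the $\mathscr{R}$-class $\mathscr{R}_e$ is finite, hence $\pex{e}{X}$ — a set of subsets of $\mathscr{R}_e$ — is finite; therefore $P(\mathcal{C})$ and $P_\circ(\mathcal{C})$ are finite posets. Consequently the arrow set of $\overline{Sz}(\mathcal{C})_m$, being a subset of $P(\mathcal{C})\times\mathcal{C}$, is finite, and so is its subset $Sz(\mathcal{C})_m$; in particular both are finite inverse categories. The same computation shows that $\overline{Sz}(\mathcal{C}(X))_m$ and $Sz(\mathcal{C}(X))_m$ are finite inverse monoids, which we regard as finite inverse categories with a single object so that Theorem~\ref{theo-enlarg-imp-morita-alg} (through Proposition~\ref{propo-Morita-conv-alg}) applies to them as well.

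With finiteness in hand, Theorem~\ref{theo-enlarg-imp-morita-alg} applied to the enlargements furnished by Theorem~\ref{theo-restrict-exp-enlarg} gives that $\mathbb{K}\overline{Sz}(\mathcal{C})_m$ and $\mathbb{K}Sz(\mathcal{C})_m$ are Morita equivalent, that is, $\mathbb{K}_{sglob}\mathcal{C}\simeq_{M}\mathbb{K}_{spar}\mathcal{C}$, and similarly $\mathbb{K}_{sglob}\mathcal{C}(X)\simeq_{M}\mathbb{K}_{spar}\mathcal{C}(X)$ for each object $X$. There is no genuine obstacle here beyond this bookkeeping; the one point worth spelling out is the passage from ``each fibre $\pex{e}{X}$ is finite'' to ``$P(\mathcal{C})$ is finite'', since it is the finiteness of the whole $P(\mathcal{C})$ — hence of the object set of the expansion — that makes the convolution algebras unital and allows Linckelmann's isomorphism and Xu's proposition to be invoked inside the proof of Theorem~\ref{theo-enlarg-imp-morita-alg}.
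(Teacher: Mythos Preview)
Your proposal is correct and follows exactly the paper's approach: the paper's proof is the one-line ``The conclusion follows from Theorem~\ref{theo-restrict-exp-enlarg} and Theorem~\ref{theo-enlarg-imp-morita-alg}.'' You have simply made explicit the finiteness check that the paper leaves to the reader, and your argument for it is sound.
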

\begin{proof}
	The conclusion follows from Theorem \ref{theo-restrict-exp-enlarg} and Theorem \ref{theo-enlarg-imp-morita-alg}.
\end{proof}


\subsection{Kan extension of representations}
To deal with  representation of inverse categories from a categorical point of view to use Kan extensions (cf. Mac Lane \cite{maclane-categories} and Riehl \cite{riehl-category-context}). This tool allows us to deal with infinite inverse categories, as we show in the next theorem.

\begin{definition}\label{defi-cat-rep}\cite{xu-representations}
	Let $\mathcal{C}$ be a category and $\mathbb{K}$ a commutative ring, a \textit{representation} of $\mathcal{C}$ over $\mathbb{K}$ is a covariant functor $\digamma: \mathcal{C}\rightarrow Mod(\mathbb{K})$.	
\end{definition}

The culminating theorem in this section presents the utilization of Kan extensions in precisely characterizing representations of Szendrei expansions of non finite inverse categories.

\begin{theorem}\label{theo-rep-inv-cat-exp}
	Let 
$\mathcal{C}$ be an inverse category and $\mathbb{K}$ be a unital, commutative ring. 
 The representations of $\widehat{\overline{Sz}(\mathcal{C})_m}$ over $\mathbb{K}$ are Kan extensions of the representations of $\widehat{{Sz}(\mathcal{C})_m}$ over $\mathbb{K}$. 
\end{theorem}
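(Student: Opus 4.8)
The plan is to deduce this from the principle that a Kan extension along an equivalence of categories is nothing but precomposition with a quasi-inverse. First I would recall that, by Theorem \ref{theo-restrict-exp-enlarg}, the strict global expansion $\overline{Sz}(\mathcal{C})_m$ is an enlargement of the strict partial expansion $Sz(\mathcal{C})_m$, i.e. $Sz(\mathcal{C})_m \subseteq_E \overline{Sz}(\mathcal{C})_m$; both are small inverse categories by Proposition \ref{prop.szendrei.pseudo.prod} and Theorem \ref{propo-all-Szendrei-exp}. Proposition \ref{propo-invcat-enlarg-implies-cauchyenlarg} then applies and yields that the induced inclusion of Cauchy completions $\inc : \widehat{Sz(\mathcal{C})_m} \hookrightarrow \widehat{\overline{Sz}(\mathcal{C})_m}$ is an equivalence of categories. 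I would fix a quasi-inverse functor $K : \widehat{\overline{Sz}(\mathcal{C})_m} \to \widehat{Sz(\mathcal{C})_m}$ together with natural isomorphisms $\alpha,\beta$ witnessing $K\inc \cong \mathrm{Id}$ and $\inc K \cong \mathrm{Id}$.

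Next I would invoke the standard categorical input. Both $\widehat{Sz(\mathcal{C})_m}$ and $\widehat{\overline{Sz}(\mathcal{C})_m}$ are small (Proposition \ref{propo-cauchy-compl-properties}(i)) and $Mod(\mathbb{K})$ is complete and cocomplete, so every representation $\digamma : \widehat{Sz(\mathcal{C})_m} \to Mod(\mathbb{K})$ admits a left Kan extension $\mathrm{Lan}_{\inc}\digamma$ and a right Kan extension $\mathrm{Ran}_{\inc}\digamma$ along $\inc$, both computed pointwise. Because $\inc$ is an equivalence, each comma category indexing these pointwise formulas has a terminal (respectively initial) object, so the colimit and the limit degenerate; consequently $\mathrm{Lan}_{\inc}\digamma \cong \digamma \circ K \cong \mathrm{Ran}_{\inc}\digamma$, and the unit $\digamma \Rightarrow (\mathrm{Lan}_{\inc}\digamma)\circ\inc$ and counit $(\mathrm{Ran}_{\inc}\digamma)\circ\inc \Rightarrow \digamma$ are both isomorphisms. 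In particular the left and right Kan extensions along $\inc$ coincide, so one may speak of \emph{the} Kan extension along $\inc$. Observe that only smallness, not finiteness, of $\mathcal{C}$ is used here, which is precisely why this route reaches beyond the situation of Corollary \ref{coro-szendrei-alge-morita}.

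Finally I would close the loop identifying representations of $\widehat{\overline{Sz}(\mathcal{C})_m}$ with these Kan extensions. Restriction along $\inc$ carries a representation $\mathcal{G}$ of $\widehat{\overline{Sz}(\mathcal{C})_m}$ to the representation $\mathcal{G}\circ\inc$ of $\widehat{Sz(\mathcal{C})_m}$, and Kan extension carries $\digamma$ to $\digamma\circ K$; using $\alpha$ and $\beta$ one obtains $\mathrm{Lan}_{\inc}(\mathcal{G}\circ\inc) \cong \mathcal{G}\circ(\inc K) \cong \mathcal{G}$ and $(\mathrm{Lan}_{\inc}\digamma)\circ\inc \cong \digamma\circ(K\inc) \cong \digamma$. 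Hence restriction and Kan extension along $\inc$ are mutually quasi-inverse equivalences between the categories of representations, which is exactly the assertion that every representation of $\widehat{\overline{Sz}(\mathcal{C})_m}$ is, up to natural isomorphism, the Kan extension along $\inc$ of a representation of $\widehat{Sz(\mathcal{C})_m}$. I do not expect a genuine obstacle in this argument: the substantive content was already secured in Theorem \ref{theo-restrict-exp-enlarg} and Proposition \ref{propo-invcat-enlarg-implies-cauchyenlarg}, and what remains is spelling out the universal property of Kan extensions and the coherence bookkeeping for $\alpha,\beta$. The same argument applies verbatim, with $X\in\mathcal{C}^{(0)}$ fixed, to the inner expansions $\overline{Sz}(\mathcal{C}(X))_m$ and $Sz(\mathcal{C}(X))_m$.
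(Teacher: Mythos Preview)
Your proposal is correct and follows essentially the same approach as the paper: invoke Theorem~\ref{theo-restrict-exp-enlarg} and Proposition~\ref{propo-invcat-enlarg-implies-cauchyenlarg} to obtain that $\inc$ is an equivalence, use smallness and bicompleteness of $Mod(\mathbb{K})$ to guarantee existence of both Kan extensions, and conclude that they coincide. Your version is considerably more detailed than the paper's---in particular you spell out the quasi-inverse $K$, the degeneration of the pointwise (co)limit formulas, and the mutual inverse equivalence between restriction and Kan extension---whereas the paper dispatches these points by citation to Borceux and Mac Lane.
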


\begin{proof}
	From Theorem \ref{theo-restrict-exp-enlarg}  $Sz({\mathcal{C}})_m\subseteq_E \overline{Sz}(\mathcal{C})_m$ implies that $inc:\widehat{Sz({\mathcal{C}})_m}\hookrightarrow \widehat{\overline{Sz}(\mathcal{C})_m}$ is an equivalence.  Since $\mathcal{C}$ is small by definition, $Mod(\mathbb{K})$ is a bicomplete category and the Szendrei expansion is an inverse category, given any functor $\digamma : \widehat{Sz(\mathcal{C})_m}\rightarrow Mod(\mathbb{K})$, there exist its right and left Kan extensions, which are naturally isomorphic (see \cite[Chapter 3]{borceux-handbook-I} and \cite{maclane-categories}).
 	Hence, we have that the representations of $\widehat{\overline{Sz}(\mathcal{C})_m}$ are Kan extensions of the representations of $\widehat{{Sz}(\mathcal{C})_m}$.
\end{proof}

\section{Comparison to previous expansions}\label{section-previous-cases} As mentioned in the introduction, the literature of partial actions presents expansions of the following structures: ordered groupoids, from Gilbert's work \cite{gilbert-pdgexp}; inverse semigroups, from Lawson-Margolis-Steinberg \cite{lawsonmarstein-expansions} that are isomorphic to the Prefix expansion from Buss-Exel \cite{bussexel-expa}; groups, from Exel \cite{ruy98} that are isomorphic to the Birget-Rhodes expansion exposed in Kellendonk-Lawson \cite{kellendonk-lawson}.

Groupoids, inverse semigroups and groups are all particular cases of inverse categories. In this manner, it is possible to obtain a reinterpretation of each one of those expansions using the approach that we developed, that is, via Bernoulli actions and its associated semidirect products.

\subsection{Expansion of ordered groupoids} Let $\mathcal{G}$ be an ordered groupoid, which is an ordered inverse category, as in Definition \ref{defi-ordered-invcat}, where each idempotent arrow is an identity of an object, and inner and outer domain and range maps coincide. 

First, note that applying the constructions of Section \ref{section-actions} to $\mathcal{G}$, we have the set $P_{e}(\mathcal{G}):= \{A\subset \mathcal{C}; \mid A\mid < \infty,\:   r(a)=e, \forall a\in A  \}.$ Then we can define the Bernoulli fibred actions $(\varepsilon,\mathfrak{B}):\mathcal{G}\curvearrowright P(\mathcal{G})$ and $(\epsilon,\mathfrak{b}):\mathcal{G}\curvearrowright_p P_\circ(\mathcal{G})$. Since every idempotent is an identity,  the strict  actions coincide with the non strict actions.  In this manner the Birget-Rhodes expansion $\mathcal{G}^{BR}$, defined by Gilbert \cite{gilbert-pdgexp},  can be rewritten as the semidirect product $\mathcal{G}^{BR}=P_\circ(\mathcal{G})\rtimes_{(\epsilon,\mathfrak{sb})} \mathcal{G}$. Moreover, we can combine our theory with the work of Miller \cite{miller-thesis-structure} and exhibit an enlargement of the groupoid $\mathcal{G}^{BR}$ given as the semidirect product induced by $(\varepsilon,\mathfrak{B})$; using strategies from Clark-Sims \cite{clark-sims-equivgpdSteinbalgebras} we can prove that their groupoid algebras are Morita equivalent.

\subsection{Expansion of inverse semigroups} An inverse semigroup $S$ can be seen as an inverse category with only one object (and possibly without the identity morphism).

In this manner, following Section \ref{section-actions}, we can define the set $P_e(S)= \{A\subset  S; \mid A\mid < \infty,\: a^*a=e , \: \forall a\in A \}$, for each $e\in S$, and the Bernoulli actions by symmetry $\mathfrak{sB}:S\curvearrowright I(P(S))$ and $\mathfrak{sb}\curvearrowright_p I(P_\circ(S) )$. We can combine our theory with the $L$-triple of O'Carroll \cite{ocarroll-strongI} and its identification of partial actions from Khrypchenko \cite{mykola-partial}; then we can conclude that the Prefix expansions from Lawson-Margolis-Steinberg \cite{lawsonmarstein-expansions} and Buss-Exel \cite{bussexel-expa} can be rewritten as the semidirect product $L(S,P(S), P_\circ(S))=P(S)\rtimes_{\mathfrak{sb}}S$. Working in a similar fashion with the global action, we will obtain an inverse semigroup which is the enlargement of $P(S)\rtimes_{\mathfrak{sb}}S$; its groupoid algebras are Morita equivalent, via the algebra isomorphism from Steinberg \cite{steinberg-mobius} and the strategies from Clark-Sims \cite{clark-sims-equivgpdSteinbalgebras}.

\subsection{Expansion of groups} A group $G$ is a particular case of groupoid, and hence of an inverse category. 
Using the same tools from previous expansions we will obtain the sets  $P(G)=\{A\subset G;\: \mid A\mid <\infty  \}$  and $P_e(G)=\{A\subset G; \: \mid A\mid < \infty, \: A\ni e  \}$, where $e$ is the neutral element of $G$. They will lead us to two group actions $\mathfrak{B}:G\curvearrowright P(G)$ and $\mathfrak{b}:G\curvearrowright_p P_e(G)$. Then there is the isomorphism of inverse semigroups $S(G)\simeq P_e(G)\rtimes_{\mathfrak{b}} G$ and we can obtain the global version using the same method with the global action. It is possible to employ the same steps from the inverse semigroup expansion, then we will obtain a Morita context for the the partial algebra of $G$, which is $\mathbb{K}_{par}(G)=\mathbb{K}(P_e(G)\rtimes_{\mathfrak{b}}G)$.


\section{Acknowledgments}

This study was financed in part by the 
Coordena\c{c}\~ao de Aperfei\c{c}oamento de Pessoal de N\'ivel Superior– Brasil (CAPES)– Finance Code 001. The first author was partially supported by the National Council for Scientific and Technological Development - CNPq (project 309469/2019-8). This work was partially developed during the second author's post-doctorate at the Graduate Program in Mathematics at the Federal University of Paran\'a, Brazil (PPGM-UFPR).

\bibliographystyle{amsplain}
\bibliography{biblio}

\providecommand{\bysame}{\leavevmode\hbox to3em{\hrulefill}\thinspace}
\providecommand{\MR}{\relax\ifhmode\unskip\space\fi MR }
\providecommand{\MRhref}[2]{%
  \href{http://www.ams.org/mathscinet-getitem?mr=#1}{#2}
}
\providecommand{\href}[2]{#2}
\begin{thebibliography}{10}

\bibitem{exel1994circle}
\emph{Circle actions on {C}*-algebras, partial automorphisms, and a generalized
  {P}imsner-{V}oiculescu exact sequence}, Journal of functional analysis
  \textbf{122} (1994), no.~2, 361--401.

\bibitem{bagio2010partial}
Dirceu Bagio, Daiana Flores, and Antonio Paques, \emph{Partial actions of
  ordered groupoids on rings}, Journal of Algebra and its Applications
  \textbf{9} (2010), no.~03, 501--517.

\bibitem{birget1984almost}
Jean-Camille Birget and John Rhodes, \emph{Almost finite expansions of
  arbitrary semigroups}, Journal of Pure and Applied Algebra \textbf{32}
  (1984), no.~3, 239--287.

\bibitem{borceux2}
F.~Borceux, \emph{Handbook of categorical algebra 2 -- categories and
  structures}, Cambridge Univ. Press, 1994.

\bibitem{borceux-handbook-I}
\bysame, \emph{Handbook of categorical algebra: volume 1, {B}asic category
  theory}, vol.~1, Cambridge University Press, 1994.

\bibitem{bussexel-expa}
A.~Buss and R.~Exel, \emph{Inverse semigroup expansions and their actions on
  ${C}^{\ast}$-algebras}, Illinois J. Math. \textbf{56} (2012), no.~4,
  1185--1212.

\bibitem{clark-sims-equivgpdSteinbalgebras}
L.~O. Clark and A.~Sims, \emph{Equivalent groupoids have {M}orita equivalent
  {S}teinberg algebras}, Journal of Pure and Applied Algebra \textbf{219}
  (2015), no.~6, 2062 -- 2075.

\bibitem{cockett-restriction-I}
J.~R.~B Cockett and S.~Lack, \emph{Restriction categories {I}: categories of
  partial maps}, Theoretical computer science \textbf{270} (2002), no.~1-2,
  223--259.

\bibitem{cortes-partialactoncat}
W.~Cortes, M.~Ferrero, and E.~N. Marcos, \emph{Partial actions on categories},
  Comm. Algebra \textbf{44} (2016), no.~7, 2719--2731. \MR{3507148}

\bibitem{dewolf-ehresmann}
D.~DeWolf and D.~Pronk, \emph{The {E}hresmann-{S}chein-{N}ambooripad theorem
  for inverse categories}, arXiv preprint arXiv:1507.08615 (2015).

\bibitem{ruy05}
M.~Dokuchaev and R.~Exel, \emph{Associativity of crossed products by partial
  actions, enveloping actions and partial representations}, Trans. Amer. Math.
  Soc. \textbf{357} (2005), no.~5, 1931--1952. \MR{2115083}

\bibitem{mishca-ruy-paolo}
M.~Dokuchaev, R.~Exel, and P.~Piccione, \emph{Partial representations and
  partial group algebras}, J. Algebra \textbf{226} (2000), no.~1, 505--532.
  \MR{1749902}

\bibitem{ruy98}
R.~Exel, \emph{Partial actions of groups and actions of inverse semigroups},
  Proc. Amer. Math. Soc. \textbf{126} (1998), no.~12, 3481--3494. \MR{1469405}

\bibitem{ruylivro}
\bysame, \emph{Partial dynamical systems, {F}ell bundles and applications},
  Mathematical Surveys and Monographs, vol. 224, American Mathematical Society,
  Providence, RI, 2017. \MR{3699795}

\bibitem{gilbert-pdgexp}
N.~D. Gilbert, \emph{Actions and expansions of ordered groupoids}, J. Pure
  Appl. Algebra \textbf{198} (2005), no.~1-3, 175--195. \MR{2132881}

\bibitem{gilbert-pthm}
\bysame, \emph{A {$P$}-theorem for ordered groupoids}, Semigroups and formal
  languages, World Sci. Publ., Hackensack, NJ, 2007, pp.~84--100. \MR{2364779}

\bibitem{hollings-thesis-partial}
C.~D. Hollings, \emph{Partial actions of semigroups and monoids}, Ph.D. thesis,
  University of York, 2007.

\bibitem{kellendonk-lawson}
J.~Kellendonk and M.~Lawson, \emph{Partial actions of groups}, Internat. J.
  Algebra Comput. \textbf{14} (2004), no.~1, 87--114. \MR{2041539}

\bibitem{mykola-partial}
M.~Khrypchenko, \emph{Partial actions and an embedding theorem for inverse
  semigroups}, Periodica Mathematica Hungarica \textbf{78} (2019), no.~1,
  47--57.

\bibitem{lawsonlivro}
M.~Lawson, \emph{Inverse semigroups}, World Scientific Publishing Co., Inc.,
  River Edge, NJ, 1998, The theory of partial symmetries. \MR{1694900}

\bibitem{lawsonmarstein-expansions}
M.~Lawson, S.~W. Margolis, and B.~Steinberg, \emph{Expansions of inverse
  semigroups}, J. Aust. Math. Soc. \textbf{80} (2006), no.~2, 205--228.
  \MR{2221438}

\bibitem{linckelmann-inverse}
M.~Linckelmann, \emph{On inverse categories and transfer in cohomology},
  Proceedings of the Edinburgh Mathematical Society \textbf{56} (2013), no.~1,
  187--210.

\bibitem{maclane-categories}
S.~Mac~Lane, \emph{Categories for the working mathematician}, vol.~5, Springer
  Science \& Business Media, 2013.

\bibitem{maclane-sheaves}
S.~MacLane and I.~Moerdijk, \emph{Sheaves in geometry and logic: A first
  introduction to topos theory}, Springer Science \& Business Media, 2012.

\bibitem{miller-thesis-structure}
E.~C. Miller, \emph{Structure theorems for ordered groupoids}, Ph.D. thesis,
  Heriot-Watt University, 2009.

\bibitem{nystedt-cat-partial-acts}
P.~Nystedt, \emph{Partial category actions on sets and topological spaces},
  Communications in Algebra \textbf{46} (2018), no.~2, 671--683.

\bibitem{nystedt-pinedo-epsilon}
P.~Nystedt, J.~{\"O}inert, and H.~Pinedo, \emph{Epsilon-strongly
  groupoid-graded rings, the {P}icard inverse category and cohomology}, Glasgow
  Mathematical Journal \textbf{62} (2020), no.~1, 233--259.

\bibitem{ocarroll-strongI}
L.~O'Carroll, \emph{Strongly {E}-reflexive inverse semigroups}, Proceedings of
  the Edinburgh Mathematical Society \textbf{20} (1977), no.~4, 339--354.

\bibitem{riehl-category-context}
E.~Riehl, \emph{{C}ategory {T}heory in {C}ontext}, Aurora: Dover Modern Math
  Originals, Dover Publications, 2016.

\bibitem{steinberg-mobius}
B.~Steinberg, \emph{Mobius functions and semigroup representation theory}, J.
  Combin. Theory Ser. A \textbf{113} (2006), no.~5, 866--881. \MR{2231092}

\bibitem{steinberg-categories-as-alg-II}
B.~Steinberg and B.~Tilson, \emph{Categories as algebras {II}, {I}nternat}, J.
  Algebra and Comput., to appear.

\bibitem{maria-nontebr}
M.~B. Szendrei, \emph{A note on {B}irget-{R}hodes expansion of groups}, J. Pure
  Appl. Algebra \textbf{58} (1989), no.~1, 93--99. \MR{996176}

\bibitem{xu-representations}
F.~Xu, \emph{Representations of categories and their applications}, Journal of
  Algebra \textbf{317} (2007), no.~1, 153--183.

\end{thebibliography}


\end{document}